\begin{document}

\flushbottom\title{The Toll Walk Transit Function of a Graph: Axiomatic Characterizations and First-Order Non-definability}

\author{Manoj Changat$^1$\and Jeny Jacob$^1$ \and Lekshmi Kamal K. Sheela$^1$ \and  Iztok Peterin$^{2,3}$ }
\institute {\small	$^1$ Department of Futures Studies, University of Kerala, Thiruvananthapuram - 695581, India. E-mails:  mchangat@keralauniversity.ac.in, jenyjacobktr@gmail.com, lekshmisanthoshgr@gmail.com\\
\small	$^2$ University of Maribor, Faculty of Electrical Engineering and Computer Science, Koro\v{s}ka 46, 2000 Maribor, Slovenia.\\ 
\small	$^3$ Institute of Mathematics, Physics and Machanics, Jadranska 19, 1000 Ljubljana, Slovenia\\
E-mail: iztok.peterin@um.si	}
\maketitle	
\begin{abstract}
A walk $W=w_1w_2\dots w_k$, $k\geq 2$, is called a toll walk if $w_1\neq w_k$ and $w_2$ and $w_{k-1}$ are the only neighbors of $w_1$ and $w_k$, respectively, on $W$ in a graph $G$. A toll walk interval $T(u,v)$, $u,v\in V(G)$, contains all the vertices that belong to a toll walk between $u$ and $v$. The toll walk intervals yield a toll walk transit function $T:V(G)\times V(G)\rightarrow 2^{V(G)}$. We represent several axioms that characterize the toll walk transit function among chordal graphs, trees,  asteroidal triple-free graphs, Ptolemaic graphs, and distance hereditary graphs. We also show that the toll walk transit function can not be described in the language of first-order logic for an arbitrary graph.
\end{abstract}

\section{Introduction}

A toll walk denoted as $W$ is a type of walk on a graph $G$ that starts at a vertex $u$ and ends at a distinct vertex $v$. It possesses two distinct properties: first, it includes exactly one neighbor of $u$ as its second vertex, and second, it involves exactly one neighbor of $v$ as its penultimate vertex. A toll walk can be likened to a journey with an entrance fee or a toll that is paid only once, specifically at the outset when entering a system represented by a graph. Similarly, one exits the system precisely once, and this occurs at the neighbor of the final vertex.

The concept of toll walks was introduced by Alcon \cite{toll2} as a tool to characterize dominating pairs in interval graphs. Subsequently, Alcon et al. \cite{toll3}, despite the publication year discrepancy, recognized that all vertices belonging to toll walks between $u$ and $v$ could be viewed as the toll interval $T(u,v)$. This led to the development of the toll walk transit function $T: V(G)\times V(G)\rightarrow 2^{V(G)}$ for a graph $G$ and the concept of toll convexity. A pivotal result established in \cite{toll3} asserts that a graph $G$ conforms to the principles of toll convexity if and only if it is an interval graph. Furthermore, research extended to explore toll convexity within standard graph products, examining classical convexity-related invariants, as investigated by Gologranc and Repolusk \cite{GoRe,GoRe1}. More recently, Dourado \cite{Dour} explored the hull number with respect to the toll convexity.

In \cite{lcp} an axiomatic examination of the toll walk function $T$ in a graph was explored. The main tool for this axiomatic approach is the notion of transit function. Mulder \cite{muld-08} introduced transit functions in discrete structures to present a unifying approach for results and ideas on intervals, convexities, and betweenness in graphs, posets, vector spaces, and several other mathematical structures. A transit function is an abstract notion of an interval, and hence the axioms on a transit function are sometimes known as betweenness axioms.  

Specifically, in \cite{lcp} an examination of the various well-established axioms of betweenness along with certain axioms studied in the context of the induced path function, a well-studied transit function on graphs, supplemented by new axioms tailored to the toll walk transit function, was attempted. In addition, in \cite{lcp} a novel axiomatic characterization of interval graphs and 
 subclass of asteroidal triple-free graphs was established. Two problems were posed in \cite{lcp}, which are the following.

\begin{problem}\label{pbm1}
 Is there an axiomatic characterization of the toll walk transit function of an arbitrary connected graph $G$?  
\end{problem}	

\begin{problem}\label{pbm2}
 Is there a characterization of the toll walk transit function of chordal graphs?    
\end{problem}	

In this paper, we solve the Problem~\ref{pbm2} affirmatively and provide the axiomatic characterization of chordal graphs and trees (Section 3), along with
AT-free graphs (Section 4), Ptolemaic graphs (Section 5) and distance-hereditary graphs (Section 5) using the betweenness axioms on an arbitrary transit function $R$. Interestingly, we prove that for the Problem~\ref{pbm1}, there is no characterization of the toll walk transit function of an arbitrary connected graph using a set of first-order axioms. In other words, in Section 6 we prove that the toll walk transit function is not first-order axiomatizable. We use the standard technique of Ehrenfeucht-Fraisse Game of first-order logic to prove the non-definability of the toll walk transit function. In the following section, we settle the notation and recall some known results.


\section{Preliminaries}

Let $G$ be a finite simple graph with the vertex set $V(G)$ and the edge set $E(G)$. For a positive integer $k$, we use the notation $[k]$ for the set $\{1,2,\dots,k\}$. The set $\{u\in V(G):uv\in E(G)\}$ is the \emph{open neighborhood} $N(v)$ of $v \in V(G)$ and contains all neighbors of $v$. The \emph{closed neighborhood} $N[v]$ is then $N(v)\cup\{v\}$. A vertex $v$ with $N[V]=V(G)$ is called \emph{universal}. Vertices $w_1,\dots,w_k$ form a \emph{walk} $W_k$ of length $k-1$ in $G$ if $w_iw_{i+1}\in E(G)$ for every $i\in [k-1]$. We simply write $W_k=w_1\cdots w_k$. A walk $W_k$ is called a \emph{path} of $G$ if all vertices of $W_k$ are different. We use the notation $v_1,v_k$-path for a path $P_k=v_1\cdots v_k$ where $P_k$ starts at $v_1$ and ends at $v_k$. Furthermore, $u\xrightarrow{P} x$ denotes the sub-path of a path $P$ with end vertices $u$ and $x$. An edge $v_iv_j$ with $|i-j|>1$ is called a \emph{chord} of $P_k$. A path without chords is an \emph{induced path}. 
The minimum number of edges on a $u,v$-path is the distance $d(u,v)$ between $u,v\in V(G)$. If there is no $u,v$-path in $G$, then we set $d(u,v)=\infty$. A $u,v$-path  of length $d(u,v)$ is called a $u,v$-\emph{shortest path}. 

A walk $W=w_1\cdots w_k$ is called a \emph{toll walk} if $w_1\neq w_k$, $w_2$ is the only neighbor of $w_1$ on $W$ in $G$ and $w_{k-1}$ is the only neighbor of $w_k$ on $W$ in $G$. The only toll walk that starts and ends at the same vertex $v$ is $v$ it itself. The following lemma from \cite{toll3} will be useful on several occasions.

\begin{lemma}\label{toll1} 
A vertex $v$ is in some toll walk between two different non-adjacent vertices $x$ and $y$ if and only if $N[x]-\{v\}$ does not separate $v$ from $y$ and $N[y]-\{v\}$ does not separate $v$ from $x$.
\end{lemma}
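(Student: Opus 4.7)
The plan is to prove both directions of the equivalence separately: the forward direction by extracting sub-walks of the given toll walk, and the reverse direction by splicing the two hypothesized walks into a single toll walk.

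For the forward direction, suppose $v$ lies on a toll walk $W = w_1 w_2 \cdots w_k$ with $w_1 = x$ and $w_k = y$. Since $v = x$ or $v = y$ are trivial, assume $v = w_i$ with $2 \leq i \leq k-1$. I first extract two positional consequences of the toll axiom: (1) no $w_j$ with $j \geq 3$ lies in $N(x)$, because otherwise $xw_j$ would be a forbidden second neighbor edge; and (2) no $w_j$ with $j \geq 2$ equals $x$, because any such repetition would force $w_{j-1}$ or $w_{j+1}$ to lie in $N(x)$ at a position other than $2$, contradicting (1). Consequently, the sub-walk $w_i w_{i+1} \cdots w_k$ meets $N[x]$ only possibly at $v$ itself (precisely when $i=2$), hence it is a $v,y$-walk in $G - (N[x] - \{v\})$. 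The symmetric argument on $w_1 \cdots w_i$ handles the other non-separation statement.

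For the reverse direction, the two hypotheses furnish a $v,y$-path $P_1 = v = b_1 b_2 \cdots b_q = y$ in $G - (N[x] - \{v\})$ and an $x,v$-path that I write in reverse as $x = u_p u_{p-1} \cdots u_1 = v$ in $G - (N[y] - \{v\})$. Let $m$ be the smallest index with $u_m \in N(x)$ and $m'$ the smallest index with $b_{m'} \in N(y)$; both exist because $u_{p-1} \in N(x)$ and $b_{q-1} \in N(y)$. My candidate toll walk is $W := x, u_m, u_{m-1}, \ldots, u_1 = v = b_1, b_2, \ldots, b_{m'}, y$, with the natural collapses when $m=1$ or $m'=1$. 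I would then verify that (i) consecutive pairs of $W$ are edges (the only non-$P_i$ edge is $xu_m$, which holds by the choice of $u_m$), (ii) $v$ appears on $W$, and (iii) the toll condition holds at both endpoints. For the toll condition at $x$, the vertices strictly after the second position of $W$ are $u_{m-1}, \ldots, u_1, b_2, \ldots, b_{m'}, y$: the $u_j$ with $j < m$ avoid $N(x)$ by minimality of $m$; the $b_j$ avoid $N[x] - \{v\}$ by the defining property of $P_1$ and are distinct from $v$ because $P_1$ is a path; and $y \notin N(x)$ by hypothesis. The toll condition at $y$ follows by the symmetric argument via $m'$.

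The main subtlety lies in the positional reading of the toll axiom in the forward direction: one must first record that no edge joins $w_1$ to any $w_j$ with $j \geq 3$ (and likewise for $w_k$), and this stronger positional fact, rather than the weaker set-theoretic statement about $V(W) \cap N(w_1)$, is what actually makes the sub-walk argument work. A smaller but genuine obstacle is the boundary behaviour of the reverse construction when $m = 1$ or $m' = 1$: for example, if $v \in N(x) \cap N(y)$ the candidate walk collapses to $x, v, y$, and one must check directly that this length-two walk satisfies the toll axiom at both endpoints.
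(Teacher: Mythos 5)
The paper does not prove this lemma at all --- it is quoted from \cite{toll3} --- so there is no in-paper argument to compare against; I can only assess your proof on its own, and it is correct and essentially the standard argument from \cite{toll3}: split the toll walk at $v$ for the forward direction, and for the converse splice the two witnessing paths at $v$ after shortcutting each to the first neighbour of the relevant endpoint. The subtlety you single out is real and you resolve it the right way: in \cite{toll3} the toll condition is stated positionally (as ``$uw_i\in E(G)$ if and only if $i=1$''), which is exactly your fact (1); under the weaker set-theoretic reading ``$N(w_1)\cap V(W)=\{w_2\}$'' the lemma itself would be false (attach a pendant vertex $b$ to the unique neighbour $a$ of $x$ on an induced $x,y$-path: then $x,a,b,a,\dots,y$ would qualify as a toll walk containing $b$ even though $N[x]\setminus\{b\}$ separates $b$ from $y$), so your step (1) is not derivable from the set-theoretic reading but is simply the correct form of the definition. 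The converse construction checks out, including the degenerate cases $m=1$ and $m'=1$, and the needed facts that the $x,v$-path avoids $y$ and the $v,y$-path avoids $x$ follow from $y\in N[y]\setminus\{v\}$ and $x\in N[x]\setminus\{v\}$. The only loose end is your dismissal of $v\in\{x,y\}$ as trivial: for $v=x$ the right-hand side of the equivalence is literally false whenever $y$ is reachable from $x$ (every $x,y$-path leaves $x$ through an internal vertex of $N(x)=N[x]\setminus\{x\}$), so the statement is really intended only for $v\notin\{x,y\}$ --- an implicit restriction you inherit from the source rather than introduce.
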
 

We use the standard notation $C_n$ for a \emph{cycle} on $n\geq 3$ vertices and $K_n$ for a \emph{complete graphs} on $n\geq 1$ vertices. Further graph families that are important to us for $n\geq1$ are \emph{fans} $F_2^{n+1}$ that contain a universal vertex $y_2$ and a path $p_1p_2\dots p_n$, graphs $F_3^{n}$ built by two universal vertices $y_1,y_2$ and a path $p_1p_2\dots p_n$ and $F_4^{n}$ that is obtained from $F_3^{n}$ by deleting the edge $y_1y_2$. In addition, we define the families $XF_2^{n+1}$, $XF_3^{n}$ and $XF_4^{n}$ as follows. We get graph $XF_2^{n+1}$ from $F_2^{n+1}$ by adding vertices $u,v,x$ and edges $up_1,p_nv,y_2x$, similarly we get $XF_3^{n}$ from $F_3^{n}$ by adding vertices $u,v,x$ and edges $up_1,uy_1,vp_n,vy_2,xy_1,xy_2$ and finally we get $XF_4^{n}$ from $F_4^{n}$ by adding vertices $u,v,x$ and edges $up_1,uy_1,vp_n,vy_2,xy_1,xy_2$. Observe $XF_2^{n+1}$, $XF_3^{n}$ and $XF_4^{n}$ in the last three right spots, respectively, in the last line of Figure \ref{fig2}. 

In this work, we often consider classes of graphs that can be described by forbidden induced subgraphs. A graph $G$ is \emph{chordal} if there is no induced cycle of length at least four in $G$ and all chordal graphs form a class of \emph{chordal graphs}. We call cycles of length at least five \emph{holes}. 

Another class of graphs important for us are \emph{distance-hereditary} graphs which are formed by all graphs $G$ in which every induced path in $G$ is also a shortest path in $G$. They also have a forbidden induced subgraphs characterization presented by graphs on Figure \ref{hcdf}, see also Theorem \ref{disther}.

\begin{figure}[ht!]
\begin{center}
\begin{tikzpicture}[scale=0.5,style=thick,x=1cm,y=0.7cm]
\def\vr{3pt} 

\path (0,0) coordinate (a);
\path (3,0) coordinate (b);
\path (3,3) coordinate (c);
\path (0,3) coordinate (d);
\path (1.5,5) coordinate (e);

\draw (a) -- (b) -- (c) -- (d) -- (a);
\draw (c) -- (e) -- (d);

\draw (a) [fill=white] circle (\vr);
\draw (b) [fill=white] circle (\vr);
\draw (c) [fill=white] circle (\vr);
\draw (d) [fill=white] circle (\vr);
\draw (e) [fill=white] circle (\vr);

\draw[anchor = north] (a) node {$x$};
\draw[anchor = north] (b) node {$y$};
\draw[anchor = south] (e) node {$u=v$};

\path (4.5,0) coordinate (a1);
\path (7.5,0) coordinate (b1);
\path (7.5,3) coordinate (c1);
\path (4.5,3) coordinate (d1);
\path (6,5) coordinate (e1);

\draw (a1) -- (b1) -- (c1) -- (e1) -- (d1) -- (a1);

\draw (a1) [fill=white] circle (\vr);
\draw (b1) [fill=white] circle (\vr);
\draw (c1) [fill=white] circle (\vr);
\draw (d1) [fill=white] circle (\vr);
\draw (e1) [fill=white] circle (\vr);

\draw[anchor = north] (a1) node {$x$};
\draw[anchor = north] (b1) node {$y$};
\draw[anchor = south] (e1) node {$u=v$};


\path (9,1.8) coordinate (a1);
\path (12,1.8) coordinate (b1);
\path (12,4.2) coordinate (c1);
\path (9,4.2) coordinate (d1);
\path (10.5,6) coordinate (e1);
\path (10.5,0) coordinate (f1);
\draw (a1) -- (f1)-- (b1) -- (c1) -- (e1) -- (d1);
\draw[dotted] (d1) -- (a1);

\draw (a1) [fill=white] circle (\vr);
\draw (b1) [fill=white] circle (\vr);
\draw (c1) [fill=white] circle (\vr);
\draw (d1) [fill=white] circle (\vr);
\draw (e1) [fill=white] circle (\vr);
\draw (f1) [fill=white] circle (\vr);

\draw[anchor = north] (a1) node {$x$};
\draw[anchor = north] (f1) node {$y$};
\draw[anchor = south] (e1) node {$u$};
\draw[anchor = west] (c1) node {$v$};

\path (13.5,0) coordinate (a2);
\path (16.5,0) coordinate (b2);
\path (16.5,3) coordinate (c2);
\path (13.5,3) coordinate (d2);
\path (13.5,6) coordinate (e2);
\path (16.5,6) coordinate (f2);

\draw (a2) -- (b2) -- (c2) -- (d2) -- (a2);
\draw (c2) -- (f2) -- (e2) -- (d2);

\draw (a2) [fill=white] circle (\vr);
\draw (b2) [fill=white] circle (\vr);
\draw (c2) [fill=white] circle (\vr);
\draw (d2) [fill=white] circle (\vr);
\draw (e2) [fill=white] circle (\vr);
\draw (f2) [fill=white] circle (\vr);

\draw[anchor = north] (a2) node {$x$};
\draw[anchor = south] (e2) node {$u$};
\draw[anchor = north] (b2) node {$y$};
\draw[anchor = south] (f2) node {$v$};

\path (20.5,0) coordinate (a3);
\path (18,4) coordinate (c3);
\path (19.5,5) coordinate (d3);
\path (21.5,5) coordinate (e3);
\path (23,4) coordinate (f3);

\draw (a3) -- (c3) -- (d3) -- (e3) -- (f3) -- (a3);
\draw (a3) -- (d3);
\draw (a3) -- (e3);

\draw (a3) [fill=white] circle (\vr);
\draw (c3) [fill=white] circle (\vr);
\draw (d3) [fill=white] circle (\vr);
\draw (e3) [fill=white] circle (\vr);
\draw (f3) [fill=white] circle (\vr);

\draw[anchor = north] (a3) node {$z$};
\draw[anchor = south] (f3) node {$v$};
\draw[anchor = south] (c3) node {$u$};
\draw[anchor = south] (d3) node {$x$};
\draw[anchor = south] (e3) node {$y$};

\end{tikzpicture}
\end{center}
\caption{Graphs house $H$, $C_5$, hole (different form $C_5$), domino $D$ and $3$-fan $F_2^5$ (from left to right).} \label{hcdf}
\end{figure}
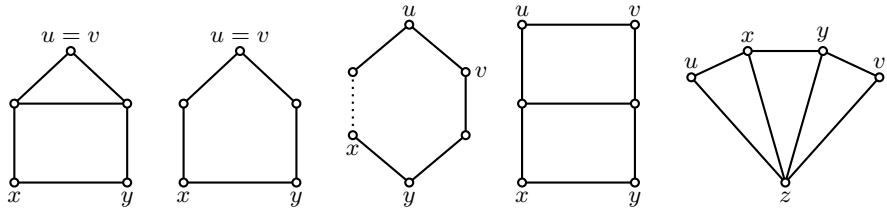 

\begin{theorem}\cite{BaMu}\label{disther} 
A graph $G$ is a distance-hereditary graph if and only if $G$ is $H$hole$DF_2^5$-free.
\end{theorem}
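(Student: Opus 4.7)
The plan is to prove both directions of this classical Bandelt--Mulder characterization, using the equivalent formulation that $G$ is distance-hereditary if and only if every induced path of $G$ is a shortest path between its endpoints.

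For the necessity direction, I would verify case by case that each of the four forbidden graphs contains an induced path exceeding the corresponding shortest-path distance, so none of them can appear as induced subgraph of a distance-hereditary graph. In the house $H$ of Figure~\ref{hcdf}, $d(x,u)=2$ via the upper-left square vertex, yet there is an induced $x,u$-path of length $3$ running through $y$ and the upper-right square vertex. In any hole $C_n$ with $n\ge 5$, two vertices at cyclic distance $2$ on the cycle are joined by the complementary induced arc of length $n-2\ge 3$. In the domino $D$, the outer corners $x$ and $u$ satisfy $d(x,u)=2$ through the middle, but are joined by an induced path of length $4$ along the boundary. In $F_2^5$, the induced path $u,x,y,v$ has length $3$ whereas $d(u,v)=2$ via $z$. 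Since induced paths in one of these graphs remain induced in every host graph containing it as an induced subgraph, and since shortest-path distances can only decrease when the ambient vertex set is enlarged, no distance-hereditary graph can contain any of these four as an induced subgraph.

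For the sufficiency direction, I would argue the contrapositive. Suppose $G$ is $\{H,\mathrm{hole},D,F_2^5\}$-free yet some induced path $P=p_0p_1\cdots p_k$ of $G$ is not a shortest $p_0,p_k$-path. Choose $(p_0,p_k)$ with $d(p_0,p_k)$ minimum, then $P$ with $k$ minimum, and finally fix a shortest $p_0,p_k$-path $Q=q_0q_1\cdots q_\ell$ minimizing the number of edges between the interior vertices of $P$ and of $Q$. This minimality forces $V(P)\cap V(Q)=\{p_0,p_k\}$ and rules out any chord that would produce a strictly smaller counterexample. One then analyzes how the interior vertices of $P$ can be joined to the interior vertices of $Q$: absence of holes forces enough chords in $P\cup Q$ to keep the structure short, absence of $F_2^5$ restricts how many consecutive interior vertices of $P$ can share a common neighbor on $Q$, and absence of the house and the domino rules out the few remaining small configurations. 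Combining these constraints shows that no such extremal $P$ and $Q$ can coexist.

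The main obstacle is this final combinatorial case analysis. One must parameterize the minimal counterexample carefully so that every possible adjacency pattern between the interior vertices of $P$ and those of $Q$ is forced to embed either a hole, the house, the domino, or $F_2^5$. These four graphs are essentially precisely the minimal obstructions to rerouting an induced path along a shortest one, and matching them inside the extremal $P\cup Q$ is the delicate step of the argument.
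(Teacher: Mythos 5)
This statement is quoted in the paper from Bandelt and Mulder's work \cite{BaMu}; the paper itself gives no proof, so there is nothing internal to compare against. Judging your attempt on its own merits: the necessity direction is correct and essentially complete. For each of the four graphs you exhibit an induced path strictly longer than the distance between its endpoints, and your observation that induced paths persist under taking induced supergraphs while distances can only shrink correctly transfers the violation to any host graph. (Your distance computations in $H$, the holes, $D$ and $F_2^5$ all check out against Figure~\ref{hcdf}.)

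The sufficiency direction, however, has a genuine gap: you set up a minimal counterexample $(P,Q)$ and then write that ``combining these constraints shows that no such extremal $P$ and $Q$ can coexist,'' but the combining is never done, and that case analysis \emph{is} the theorem. Concretely, you have not shown how an arbitrary adjacency pattern between the interiors of $P$ and $Q$ is forced to contain one of the four obstructions; phrases like ``absence of holes forces enough chords'' and ``rules out the few remaining small configurations'' name the intended conclusions of the missing cases rather than derive them. Even the preliminary claim that minimality forces $V(P)\cap V(Q)=\{p_0,p_k\}$ needs an argument (it does follow, by applying minimality of $d(p_0,p_k)$ to the two subpaths of $P$ cut at a common interior vertex and adding the resulting inequalities to contradict $k>\ell$, but you do not say this). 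As it stands the proposal is a correct proof of one implication plus a plausible strategy for the other; to count as a proof of the theorem you would need to enumerate and close every configuration in the extremal $P\cup Q$, or else follow the route of \cite{BaMu}, which derives the forbidden-subgraph characterization through the one-vertex-extension (pendant vertex/twin) structure of distance-hereditary graphs rather than by a direct minimal-counterexample analysis.
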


In Section 4 we further define the class of Ptolemaic graphs. Next, we define AT-\emph{free} graphs that contain all asteroidal-triple free graphs. The vertices $u,v,w$ form an \emph{asteroidal triple} in $G$ if there exists a $u,v$ path without a neighbor of $w$, a $u,w$ path without a neighbor of $v$, and a $v,w$ path without a neighbor of $u$. A graph $G$ is called an \emph{AT-free graph} if $G$ does not have an asteroidal triple. The following characterization of $AT$-free graphs with forbidden induced subgraphs from \cite{Kohl}, see also \cite{ATfree}, will be important later. All forbidden induced subgraphs are depicted in Figure \ref{fig2}. We use the same notation as presented in \cite{ATfree}.

\begin{theorem}\cite{Kohl}\label{AT-free} 
A graph $G$ is $(C_kT_2X_2X_3X_{30}\dots X_{41}XF_2^{n+1}XF_3^{n}XF_4^{n})$-free for $k\geq 6$ and $n\geq 1$ if and only if $G$ is $AT$-free graph.
\end{theorem}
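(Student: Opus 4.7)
The plan is to prove the two directions of the forbidden-induced-subgraph characterization separately. For the easier direction, one shows that each member of the listed family itself contains an asteroidal triple; since ``harbouring an AT'' is preserved under induced supergraphs, this implication follows. For $C_k$ with $k\ge 6$, three vertices spaced at pairwise cyclic distance at least three form an AT, because each of the two arcs between any pair lies outside the neighborhood of the third vertex. In $XF_2^{n+1}$ the triple $\{u,v,x\}$ works, as witnessed by the path $u p_1 p_2 \cdots p_n v$ avoiding $N(x)=\{y_2\}$, and the paths $u p_1 y_2 x$ and $v p_n y_2 x$ avoiding $N(v)=\{p_n\}$ and $N(u)=\{p_1\}$ respectively. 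Analogous verifications handle $XF_3^n$ and $XF_4^n$, and for each of the finitely many exceptional graphs $T_2, X_2, X_3, X_{30},\dots,X_{41}$ one identifies an AT by direct inspection of the picture.

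For the hard direction, suppose $G$ has an asteroidal triple $\{u,v,w\}$, and fix paths $P_{uv}, P_{uw}, P_{vw}$ with $P_{xy}$ avoiding $N(z)$ for $\{x,y,z\}=\{u,v,w\}$, chosen so that the total number of vertices in $H:=V(P_{uv})\cup V(P_{uw})\cup V(P_{vw})$ is minimum (breaking ties, say, lexicographically on the three path lengths). Minimality forces each $P_{xy}$ to be induced and constrains the way the three paths can share vertices or carry chords, because any shortcut would shrink $|H|$ while preserving the AT-avoidance conditions. The heart of the proof is then to argue that the induced subgraph $G[H]$ must be isomorphic to, or contain as an induced subgraph, one of the graphs on the list. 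I would split the analysis according to (i) whether the three paths are internally vertex-disjoint, leading to a theta-like skeleton that, after resolving forced apex-to-internal adjacencies, becomes either a hole $C_k$ with $k\ge 6$ or one of the fan extensions $XF_2^{n+1}, XF_3^n, XF_4^n$; and (ii) the pattern of shared internal vertices when the paths overlap, leading to the rigid small configurations among $T_2, X_2, X_3, X_{30},\dots,X_{41}$.

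The main obstacle will be this case analysis: the forbidden family is large and irregular, and for each candidate shape of $G[H]$ one must both exhibit the AT inside it and rule out every additional chord that could either contradict the minimality of $|H|$ or destroy the AT-avoidance. Keeping careful track of which neighborhoods each path must miss, and which chords minimality forbids, is precisely what makes the combinatorial argument in \cite{Kohl} technical. A clean write-up would invest effort up front in the right minimality notion and then dispatch the configurations in increasing order of their overlap complexity, so that each later case can reuse the constraints established in the simpler ones.
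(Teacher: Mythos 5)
First, a point of reference: the paper does not prove this statement at all --- Theorem~\ref{AT-free} is imported verbatim from K\"ohler's thesis \cite{Kohl}, so there is no in-paper argument to compare yours against and your proposal has to stand on its own. The forward (easy) direction of your sketch is essentially sound: an asteroidal triple of an induced subgraph remains an asteroidal triple of the host graph (adjacency between the triple and the witnessing paths is inherited), so it suffices to exhibit an AT in each listed graph. One concrete slip there: in $C_k$ you ask for three vertices at pairwise cyclic distance at least three, which is impossible for $k=6,7,8$; the correct requirement is pairwise distance at least two (e.g.\ $v_1,v_3,v_5$ in $C_6$), since the two neighbours of the third vertex lie strictly inside the two arcs incident to it. Your verification for $XF_2^{n+1}$ also silently assumes $p_1\neq p_n$, i.e.\ $n\geq 2$: for $n=1$ the path $up_1y_2x$ passes through $p_1=p_n\in N(v)$, so that case needs separate treatment (or exposes an indexing subtlety in the statement as quoted).

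The real problem is the reverse direction. You set up a minimum-size witness $H=V(P_{uv})\cup V(P_{uw})\cup V(P_{vw})$ and assert that $G[H]$ must contain one of the listed graphs, but you never carry out the classification; you explicitly defer ``the heart of the proof'' to a case analysis described only in outline. That classification --- showing that every minimal graph containing an asteroidal triple is one of $C_k$, $T_2$, $X_2$, $X_3$, $X_{30},\dots,X_{41}$, $XF_2^{n+1}$, $XF_3^{n}$, $XF_4^{n}$ --- is the entire content of the theorem; without it the argument is circular, since ``$G[H]$ has an AT, hence contains a listed graph'' presupposes the result being proved. Moreover, minimality of $|H|$ does not by itself force $G[H]$ into finitely many shapes per size: one must still control chords between the three paths, adjacencies of $u,v,w$ to internal vertices of the opposite paths, and the overlap patterns, and it is exactly this bookkeeping that occupies K\"ohler's proof. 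As it stands, your proposal is a reasonable plan of attack rather than a proof, and the gap is precisely the exhaustive combinatorial classification.
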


\begin{figure}[ht!]
\begin{center}
\begin{tikzpicture}[scale=0.5,style=thick,x=0.8cm,y=0.8cm]
\def\vr{3pt} 

\path (0,0) coordinate (a);
\path (-1.5,2.5) coordinate (b);
\path (0,5) coordinate (c);
\path (2.5,5) coordinate (d);
\path (4,2.5) coordinate (e);
\path (2.5,0) coordinate (f);

\draw (a) -- (b) -- (c) -- (d) -- (e) -- (f);

\draw (a) [fill=white] circle (\vr);
\draw (b) [fill=white] circle (\vr);
\draw (c) [fill=white] circle (\vr);
\draw (d) [fill=white] circle (\vr);
\draw (e) [fill=white] circle (\vr);
\draw (f) [fill=white] circle (\vr);

\draw[anchor = east] (a) node {$x$};
\draw[anchor = east] (c) node {$u$};
\draw[anchor = west] (e) node {$v$};

\draw (1,-0.8) node {$C_k$};
\draw (1.25,0) node {$\cdots$};


\path (6,-0.5) coordinate (a1);
\path (7.5,1) coordinate (b1);
\path (9,2.5) coordinate (c1);
\path (9,4) coordinate (d1);
\path (9,5.5) coordinate (e1);
\path (10.5,1) coordinate (f1);
\path (12,-0.5) coordinate (g1);

\draw (a1) -- (b1) -- (c1) -- (d1) -- (e1);
\draw (c1) -- (f1) -- (g1);

\draw (a1) [fill=white] circle (\vr);
\draw (b1) [fill=white] circle (\vr);
\draw (c1) [fill=white] circle (\vr);
\draw (d1) [fill=white] circle (\vr);
\draw (e1) [fill=white] circle (\vr);
\draw (f1) [fill=white] circle (\vr);
\draw (g1) [fill=white] circle (\vr);

\draw[anchor = south] (a1) node {$u$};
\draw[anchor = south] (e1) node {$x$};
\draw[anchor = south] (g1) node {$v$};
\draw (9,-0.8) node {$T_2$};


\path (13,1.5) coordinate (a2);
\path (15,1.5) coordinate (b2);
\path (16.5,0) coordinate (c2);
\path (16.5,3) coordinate (d2);
\path (16.5,4.5) coordinate (e2);
\path (18,1.5) coordinate (f2);
\path (20,1.5) coordinate (g2);

\draw (a2) -- (b2) -- (c2) -- (f2) -- (g2);
\draw (b2) -- (d2) -- (f2);
\draw (d2) -- (e2);

\draw (a2) [fill=white] circle (\vr);
\draw (b2) [fill=white] circle (\vr);
\draw (c2) [fill=white] circle (\vr);
\draw (d2) [fill=white] circle (\vr);
\draw (e2) [fill=white] circle (\vr);
\draw (f2) [fill=white] circle (\vr);
\draw (g2) [fill=white] circle (\vr);

\draw[anchor = north] (a2) node {$u$};
\draw[anchor = north] (b2) node {$a$};
\draw[anchor = north] (g2) node {$v$};
\draw[anchor = west] (d2) node {$b$};
\draw[anchor = south] (e2) node {$x$};
\draw[anchor = north] (f2) node {$c$};
\draw[anchor = south] (c2) node {$d$};
\draw (16.5,-0.8) node {$X_2$};


\path (22,0) coordinate (a3);
\path (26,0) coordinate (c3);
\path (26,2) coordinate (d3);
\path (24,2) coordinate (e3);
\path (22,2) coordinate (f3);
\path (24,0) coordinate (b3);
\path (24,3.5) coordinate (g3);

\draw (a3) -- (b3) -- (c3) -- (d3) -- (e3) -- (f3) -- (a3);
\draw (b3) -- (e3) -- (g3);

\draw (a3) [fill=white] circle (\vr);
\draw (c3) [fill=white] circle (\vr);
\draw (d3) [fill=white] circle (\vr);
\draw (e3) [fill=white] circle (\vr);
\draw (f3) [fill=white] circle (\vr);
\draw (b3) [fill=white] circle (\vr);
\draw (g3) [fill=white] circle (\vr);

\draw[anchor = east] (a3) node {$u$};
\draw[anchor = west] (c3) node {$v$};
\draw[anchor = south] (g3) node {$x$};

\draw (24,-0.8) node {$X_3$};

s
\end{tikzpicture}

\begin{tikzpicture}[scale=0.5,style=thick,x=0.8cm,y=0.8cm]
\def\vr{3pt} 

\path (0,0) coordinate (a);
\path (2,0) coordinate (b);
\path (4,0) coordinate (c);
\path (6,0) coordinate (d);
\path (2,2) coordinate (e);
\path (4,2) coordinate (f);
\path (3,3.5) coordinate (g);

\draw (a) -- (b) -- (c) -- (d);
\draw (c) -- (f) -- (e) -- (b);
\draw (e) -- (g) -- (f);

\draw (a) [fill=white] circle (\vr);
\draw (b) [fill=white] circle (\vr);
\draw (c) [fill=white] circle (\vr);
\draw (d) [fill=white] circle (\vr);
\draw (e) [fill=white] circle (\vr);
\draw (f) [fill=white] circle (\vr);
\draw (g) [fill=white] circle (\vr);

\draw[anchor = north] (a) node {$x$};
\draw[anchor = north] (d) node {$v$};
\draw[anchor = south] (g) node {$u$};
\draw (3,-0.8) node {$X_{30}$};


\path (9,0) coordinate (a1);
\path (11,0) coordinate (b1);
\path (13,0) coordinate (c1);
\path (13,2) coordinate (d1);
\path (11,2) coordinate (e1);
\path (9,2) coordinate (f1);
\path (11,3.5) coordinate (g1);

\draw (a1) -- (b1) -- (c1) -- (d1) -- (e1) -- (f1) -- (a1);
\draw (d1) -- (b1) -- (f1);
\draw (b1) -- (e1) -- (g1);

\draw (a1) [fill=white] circle (\vr);
\draw (b1) [fill=white] circle (\vr);
\draw (c1) [fill=white] circle (\vr);
\draw (d1) [fill=white] circle (\vr);
\draw (e1) [fill=white] circle (\vr);
\draw (f1) [fill=white] circle (\vr);
\draw (g1) [fill=white] circle (\vr);

\draw[anchor = north] (a1) node {$u$};
\draw[anchor = north] (c1) node {$v$};
\draw[anchor = south] (g1) node {$x$};

\draw (11,-1.2) node {$X_{31}$};


\path (16,0) coordinate (a2);
\path (18,0) coordinate (b2);
\path (20,0) coordinate (c2);
\path (20,2) coordinate (d2);
\path (18,2) coordinate (e2);
\path (16,2) coordinate (f2);
\path (18,3.5) coordinate (g2);

\draw (a2) -- (b2) -- (c2) -- (d2) -- (e2) -- (f2) -- (a2) -- (f2) -- (e2) -- (b2);
\draw (g2) -- (e2);
\draw (b2) -- (f2);

\draw (a2) [fill=white] circle (\vr);
\draw (b2) [fill=white] circle (\vr);
\draw (c2) [fill=white] circle (\vr);
\draw (d2) [fill=white] circle (\vr);
\draw (e2) [fill=white] circle (\vr);
\draw (f2) [fill=white] circle (\vr);
\draw (g2) [fill=white] circle (\vr);

\draw[anchor = north] (a2) node {$u$};
\draw[anchor = north] (c2) node {$v$};
\draw[anchor = south] (g2) node {$x$};
\draw (18,-0.8) node {$X_{32}$};


\path (22,0) coordinate (a3);
\path (26,0) coordinate (c3);
\path (26,2) coordinate (d3);
\path (24,2) coordinate (e3);
\path (22,2) coordinate (f3);
\path (24,0) coordinate (b3);
\path (23,3.5) coordinate (g3);

\draw (a3) -- (b3) -- (c3) -- (d3) -- (e3) -- (f3) -- (a3);
\draw (b3) -- (e3) -- (g3) -- (f3);

\draw (a3) [fill=white] circle (\vr);
\draw (c3) [fill=white] circle (\vr);
\draw (d3) [fill=white] circle (\vr);
\draw (e3) [fill=white] circle (\vr);
\draw (f3) [fill=white] circle (\vr);
\draw (b3) [fill=white] circle (\vr);
\draw (g3) [fill=white] circle (\vr);

\draw[anchor = north] (a3) node {$u$};
\draw[anchor = north] (c3) node {$v$};
\draw[anchor = south] (g3) node {$x$};

\draw (24,-0.8) node {$X_{33}$};

\end{tikzpicture}

\begin{tikzpicture}[scale=0.5,style=thick,x=0.7cm,y=0.8cm]
\def\vr{3pt} 

\path (0,0) coordinate (a);
\path (2,0) coordinate (b);
\path (2.5,2) coordinate (c);
\path (3.5,3.75) coordinate (d);
\path (1,5.5) coordinate (e);
\path (-1.5,3.75) coordinate (f);
\path (-0.5,2) coordinate (g);

\draw (a) -- (b) -- (c) -- (d) -- (e) -- (f) -- (g) -- (a) -- (e) -- (g) -- (c) -- (e);

\draw (a) [fill=white] circle (\vr);
\draw (b) [fill=white] circle (\vr);
\draw (c) [fill=white] circle (\vr);
\draw (d) [fill=white] circle (\vr);
\draw (e) [fill=white] circle (\vr);
\draw (f) [fill=white] circle (\vr);
\draw (g) [fill=white] circle (\vr);

\draw[anchor = north] (f) node {$u$};
\draw[anchor = north] (b) node {$v$};
\draw[anchor = north] (d) node {$x$};

\draw (1,-0.8) node {$X_{34}$};


\path (4.5,0) coordinate (a1);
\path (8.5,0) coordinate (c1);
\path (8.5,2) coordinate (d1);
\path (6.5,2) coordinate (e1);
\path (4.5,2) coordinate (f1);
\path (6.5,0) coordinate (b1);
\path (5.5,3.5) coordinate (g1);

\draw (a1) -- (b1) -- (c1) -- (d1) -- (e1) -- (f1) -- (a1);
\draw (f1) -- (b1) -- (e1) -- (g1) -- (f1);

\draw (a1) [fill=white] circle (\vr);
\draw (c1) [fill=white] circle (\vr);
\draw (d1) [fill=white] circle (\vr);
\draw (e1) [fill=white] circle (\vr);
\draw (f1) [fill=white] circle (\vr);
\draw (b1) [fill=white] circle (\vr);
\draw (g1) [fill=white] circle (\vr);

\draw[anchor = north] (a1) node {$u$};
\draw[anchor = north] (c1) node {$v$};
\draw[anchor = south] (g1) node {$x$};

\draw (6.5,-0.8) node {$X_{35}$};


\path (9.6,0) coordinate (a2);
\path (11,0) coordinate (b2);
\path (13,0) coordinate (c2);
\path (14.4,0) coordinate (d2);
\path (11,2) coordinate (e2);
\path (13,2) coordinate (f2);
\path (12,3.5) coordinate (g2);

\draw (a2) -- (b2) -- (c2) -- (d2) -- (f2) -- (g2) -- (e2) -- (a2);
\draw (b2) -- (f2) -- (e2) -- (c2);

\draw (a2) [fill=white] circle (\vr);
\draw (b2) [fill=white] circle (\vr);
\draw (c2) [fill=white] circle (\vr);
\draw (d2) [fill=white] circle (\vr);
\draw (e2) [fill=white] circle (\vr);
\draw (f2) [fill=white] circle (\vr);
\draw (g2) [fill=white] circle (\vr);

\draw[anchor = north] (a2) node {$u$};
\draw[anchor = north] (d2) node {$v$};
\draw[anchor = south] (g2) node {$x$};

\draw (12,-0.8) node {$X_{36}$};


\path (17.5,0) coordinate (a3);
\path (19,1.5) coordinate (c3);
\path (19,3.5) coordinate (d3);
\path (17.5,5) coordinate (e3);
\path (16,3.5) coordinate (f3);
\path (16,1.5) coordinate (b3);

\draw (a3) -- (c3) -- (d3) -- (e3) -- (f3) -- (b3) -- (a3);
\draw (d3) -- (f3);

\draw (a3) [fill=white] circle (\vr);
\draw (c3) [fill=white] circle (\vr);
\draw (d3) [fill=white] circle (\vr);
\draw (e3) [fill=white] circle (\vr);
\draw (f3) [fill=white] circle (\vr);
\draw (b3) [fill=white] circle (\vr);

\draw[anchor = north] (b3) node {$x$};
\draw[anchor = north] (c3) node {$v$};
\draw[anchor = south] (e3) node {$u$};
\draw (17.5,-0.8) node {$X_{37}$};


\path (20.5,0) coordinate (a2);
\path (22.5,0) coordinate (b2);
\path (22.5,2) coordinate (c2);
\path (22.5,4) coordinate (d2);
\path (21.5,5.5) coordinate (e2);
\path (20.5,4) coordinate (f2);
\path (20.5,2) coordinate (g2);

\draw (a2) -- (b2) -- (c2) -- (d2) -- (e2) -- (f2) -- (g2) -- (a2);
\draw (g2) -- (c2);

\draw (a2) [fill=white] circle (\vr);
\draw (b2) [fill=white] circle (\vr);
\draw (c2) [fill=white] circle (\vr);
\draw (d2) [fill=white] circle (\vr);
\draw (e2) [fill=white] circle (\vr);
\draw (f2) [fill=white] circle (\vr);
\draw (g2) [fill=white] circle (\vr);

\draw[anchor = east] (f2) node {$x$};
\draw[anchor = west] (d2) node {$v$};
\draw[anchor = north] (b2) node {$u$};
\draw (21.5,-0.8) node {$X_{38}$};


\path (23.6,0) coordinate (a2);
\path (25,0) coordinate (b2);
\path (27,0) coordinate (c2);
\path (28.4,0) coordinate (d2);
\path (25,2) coordinate (e2);
\path (27,2) coordinate (f2);
\path (26,3.5) coordinate (g2);

\draw (a2) -- (b2) -- (c2) -- (d2) -- (f2) -- (g2) -- (e2) -- (a2);
\draw (b2) -- (f2);
\draw (e2) -- (c2);

\draw (a2) [fill=white] circle (\vr);
\draw (b2) [fill=white] circle (\vr);
\draw (c2) [fill=white] circle (\vr);
\draw (d2) [fill=white] circle (\vr);
\draw (e2) [fill=white] circle (\vr);
\draw (f2) [fill=white] circle (\vr);
\draw (g2) [fill=white] circle (\vr);

\draw[anchor = north] (a2) node {$u$};
\draw[anchor = north] (d2) node {$v$};
\draw[anchor = south] (g2) node {$x$};
\draw (26,-0.8) node {$X_{39}$};

\end{tikzpicture}

\begin{tikzpicture}[scale=0.5,style=thick,x=0.7cm,y=0.8cm]
\def\vr{3pt} 


\path (0.6,0) coordinate (a);
\path (2,0) coordinate (b);
\path (4,0) coordinate (c);
\path (5.4,0) coordinate (d);
\path (2,2) coordinate (e);
\path (4,2) coordinate (f);
\path (3,3.5) coordinate (g);

\draw (a) -- (b) -- (c) -- (d) -- (f) -- (g) -- (e) -- (a);
\draw (b) -- (f);
\draw (e) -- (c) -- (f);

\draw (a) [fill=white] circle (\vr);
\draw (b) [fill=white] circle (\vr);
\draw (c) [fill=white] circle (\vr);
\draw (d) [fill=white] circle (\vr);
\draw (e) [fill=white] circle (\vr);
\draw (f) [fill=white] circle (\vr);
\draw (g) [fill=white] circle (\vr);

\draw[anchor = north] (a) node {$u$};
\draw[anchor = north] (d) node {$v$};
\draw[anchor = south] (g) node {$x$};
\draw (3,-1.5) node {$X_{40}$};


\path (7,0) coordinate (a1);
\path (8.5,0) coordinate (b1);
\path (10.5,0) coordinate (c1);
\path (12,0) coordinate (d1);
\path (8.5,2) coordinate (e1);
\path (10.5,2) coordinate (f1);
\path (9.5,3.5) coordinate (g1);

\draw (a1) -- (b1) -- (c1) -- (d1);
\draw (c1) -- (f1) -- (g1) -- (e1) -- (b1);

\draw (a1) [fill=white] circle (\vr);
\draw (c1) [fill=white] circle (\vr);
\draw (d1) [fill=white] circle (\vr);
\draw (e1) [fill=white] circle (\vr);
\draw (f1) [fill=white] circle (\vr);
\draw (b1) [fill=white] circle (\vr);
\draw (g1) [fill=white] circle (\vr);

\draw[anchor = north] (a1) node {$x$};
\draw[anchor = north] (d1) node {$v$};
\draw[anchor = south] (g1) node {$u$};

\draw (9.5,-1.5) node {$X_{41}$};


\path (13,0) coordinate (a2);
\path (14.5,0) coordinate (b2);
\path (15.5,0) coordinate (c2);
\path (17.5,0) coordinate (d2);
\path (19,0) coordinate (e2);
\path (16,2) coordinate (f2);
\path (16,3.5) coordinate (g2);

\draw (a2) -- (b2) -- (c2);
\draw (e2) -- (d2) -- (f2) -- (g2);
\draw (b2) -- (f2) -- (c2);

\draw (a2) [fill=white] circle (\vr);
\draw (b2) [fill=white] circle (\vr);
\draw (c2) [fill=white] circle (\vr);
\draw (d2) [fill=white] circle (\vr);
\draw (e2) [fill=white] circle (\vr);
\draw (f2) [fill=white] circle (\vr);
\draw (g2) [fill=white] circle (\vr);

\draw[anchor = north] (a2) node {$u$};
\draw[anchor = north] (e2) node {$v$};
\draw[anchor = south] (g2) node {$x$};
\draw[anchor = west] (f2) node {$y_2$};
\draw[anchor = north] (b2) node {$p_1$};
\draw[anchor = north] (c2) node {$p_2$};
\draw[anchor = north] (d2) node {$p_n$};

\draw (16,-1.5) node {$XF_2^{n+1}$};
\draw (16.5,0) node {$\cdots$};


\path (20.1,0) coordinate (a3);
\path (21.5,0) coordinate (b3);
\path (23.5,0) coordinate (c3);
\path (24.9,0) coordinate (d3);
\path (21.5,2) coordinate (e3);
\path (23.5,2) coordinate (f3);
\path (22.5,3.5) coordinate (g3);

\draw (a3) -- (b3) -- (e3) -- (f3);
\draw (c3) -- (d3) -- (f3) -- (g3) -- (e3) -- (a3);
\draw (b3) -- (f3);
\draw (e3) -- (c3) -- (f3);

\draw (a3) [fill=white] circle (\vr);
\draw (b3) [fill=white] circle (\vr);
\draw (c3) [fill=white] circle (\vr);
\draw (d3) [fill=white] circle (\vr);
\draw (e3) [fill=white] circle (\vr);
\draw (f3) [fill=white] circle (\vr);
\draw (g3) [fill=white] circle (\vr);

\draw[anchor = north] (a3) node {$u$};
\draw[anchor = north] (d3) node {$v$};
\draw[anchor = south] (g3) node {$x$};
\draw[anchor = north] (b3) node {$p_1$};
\draw[anchor = north] (c3) node {$p_n$};
\draw[anchor = east] (e3) node {$y_1$};
\draw[anchor = west] (f3) node {$y_2$};
\draw (22.5,-1.5) node {$XF_3^{n}$};
\draw (22.5,0) node {$\cdots$};


\path (26.1,0) coordinate (a4);
\path (27.5,0) coordinate (b4);
\path (29.5,0) coordinate (c4);
\path (30.9,0) coordinate (d4);
\path (27.5,2) coordinate (e4);
\path (29.5,2) coordinate (f4);
\path (28.5,3.5) coordinate (g4);

\draw (a4) -- (b4);
\draw (c4) -- (d4) -- (f4) -- (g4) -- (e4) -- (a4);
\draw (e4) -- (b4) -- (f4);
\draw (e4) -- (c4) -- (f4);

\draw (a4) [fill=white] circle (\vr);
\draw (b4) [fill=white] circle (\vr);
\draw (c4) [fill=white] circle (\vr);
\draw (d4) [fill=white] circle (\vr);
\draw (e4) [fill=white] circle (\vr);
\draw (f4) [fill=white] circle (\vr);
\draw (g4) [fill=white] circle (\vr);

\draw[anchor = north] (a4) node {$u$};
\draw[anchor = north] (d4) node {$v$};
\draw[anchor = south] (g4) node {$x$};
\draw[anchor = north] (b4) node {$p_1$};
\draw[anchor = north] (c4) node {$p_n$};
\draw[anchor = east] (e4) node {$y_1$};
\draw[anchor = west] (f4) node {$y_2$};

\draw (28.5,-1.5) node {$XF_4^{n}$};
\draw (28.5,0) node {$\cdots$};

\end{tikzpicture}

\end{center}
\caption{Forbidden induced subgraphs of $AT$-free graphs for $k\geq 6$ and $n\geq 1$.} \label{fig2}
\end{figure}
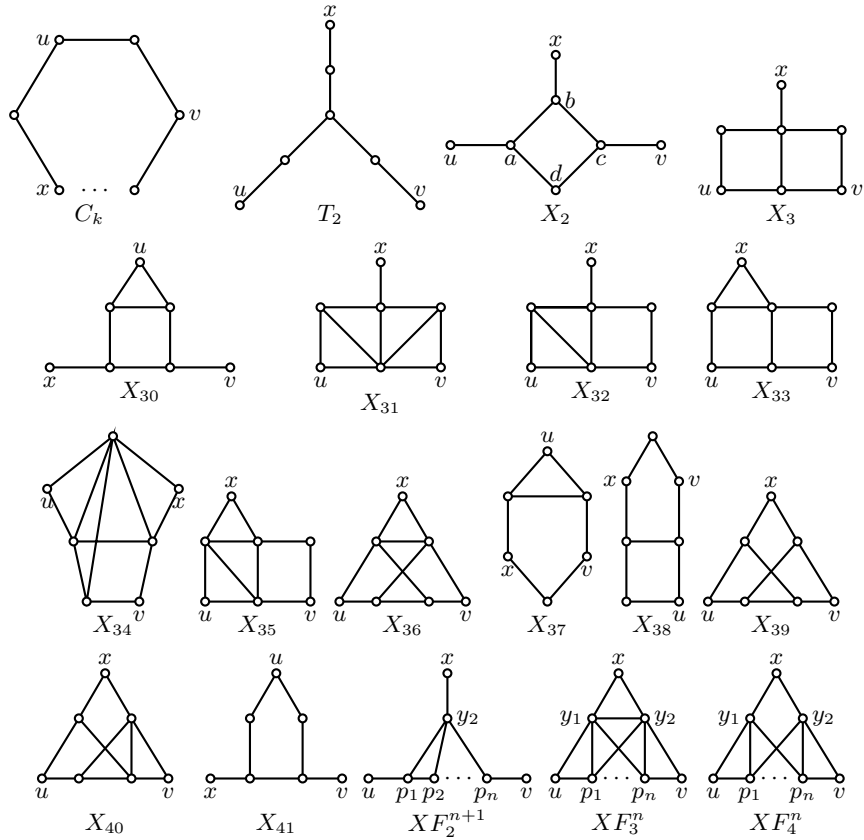

We continue with the formal definition of a transit function.  	 
A \emph{transit function} on a set $V$ is a function $R : V \times V \longrightarrow 2^{V}$ such that for every $u,v \in V$ the following three conditions hold:
	 \begin{itemize}
	 	\item[(t1)]  $u \in R(u,v)$;
	 	\item[(t2)]  $R(u,v)=R(v,u)$;
	 	\item[(t3)]  $R(u,u)=\{u\}$.
	 \end{itemize}
The \emph{underlying graph} $G_{R}$ of a transit function $R$ is a graph with vertex set $V$, where distinct vertices $u$ and $v$ are adjacent if and only if $R(u,v)=\{u,v\}$.

The well studied transit functions in graphs are the interval function $I_G$, induced path function $J_G$ and the all paths function $A_G$.  The \emph{interval function} $I_G$ of a connected graph $G$ is defined with respect to the standard distance $d$ in $G$ as $I: V\times V \longrightarrow 2^{V}$ where
	 	$$I_G(u,v)=\{w\in V(G): w \text{ lies on some }u,v\text{-shortest path in }G \}.$$
The \emph{induced path transit function} $J(u,v)$ of $G$ is a natural generalization of the interval function and is defined as $$J(u, v) =\{w \in V(G) :w\text{ lies on an induced }u,v\text{-path}\}.$$

The well known is also the \emph{ all-path transit function} $A(u, v)=\{w\in V(G):w\text{ lies on a }u,v\text{-path}\}$, see \cite{msh}, which consists of the vertices lying on at least one $u,v$-path. For any two vertices $u$ and $v$ of a connected graph $G$, it is clear that $I(u,v)\subseteq J(u,v)\subseteq A(u,v)$.\\

Probably, the first approach to the axiomatic description of a transit function $I_G$ for a tree $G$ goes back to Sholander \cite{Shol}. His work was later improved by Chv\'atal et al \cite{ChRS}. A full characterization of $I_G$ for a connected graph $G$ was presented by Mulder and Nebesk\'y \cite{mune-09}. They used (t1) and (t2) and  three other betweenness axioms. The idea of the name, "betweenness", is that $x\in R(u,v)$ can be reinterpreted as $x$ is between $u$ and $v$. Two of the axioms of Mulder \cite{muld-08} are important for our approach and follow for a transit function $R$.
\medskip
 
\noindent\textbf{Axiom  (b1).} If there exist elements $u,v,x\in V$ such that $x\in R(u,v), x\neq v$, then $v\notin R(x,u)$.\medskip

\noindent\textbf{Axiom  (b2).} If there exist elements $u,v,x\in V$ such that $x\in R(u,v)$, then $R(u,x)\subseteq R(u,v)$. \medskip

An axiomatic characterization of the induced path transit function $J$ for several classes of graphs, including chordal graphs, was presented in \cite{mcjmhm-10}. These characterizations also use axioms (b1) and (b2) together with other axioms. Some of these axioms are the following.\medskip

\noindent\textbf{Axiom (J0).} If there exist different elements $u,x,y,v \in V$ such that $x\in R(u,y)$ and $y\in R(x,v)$, then $x\in  R(u,v)$.\medskip

\noindent\textbf{Axiom (J2).} If there exist elements $u,v,x\in V$ such that $R(u,x)=\{u,x\}$, $R(x,v)=\{x,v\},u\neq v$ and $R(u,v)\neq\{u,v\}$, then $x\in R(u,v)$.\medskip
 
\noindent\textbf{Axiom (J3).} If there exist elements $u,v,x,y\in V$ such that $x\in R(u,y)$, $y\in R(x,v)$, $x\neq y$ and $R(u,v)\neq \{u,v\}$, then $x\in R(u,v)$.\medskip

\medskip

The following axioms from \cite{lcp} were used to characterize the toll walk transit function of the interval graphs and the AT-free graphs. Here, we correct a small error from \cite{lcp} and add to Axiom (TW1) two additional conditions that $u\neq x$ and $v\neq y$ which are clearly needed. 
\medskip

\noindent\textbf{Axiom (TW1).} If there exist elements $u,v,x,y,z$ such that $x,y\in R(u,v)$, $u\neq x\neq y\neq v$, $R(x,z)=\{x,z\}$, $R(z,y)=\{z,y\}$, $R(x,v)\neq \{x,v\}$ and $R(u,y)\neq \{u,y\}$, then $z\in R(u,v)$.\medskip
		
\noindent\textbf{Axiom (TW2).} If there exist elements $u,v,x,z$ such that $x\in R(u,v)$, $R(u,x)\neq\{u,x\}$, $R(x,v)\neq\{x,v\}$ and $R(x,z)=\{x,z\}$, then $z\in R(u,v)$.\medskip

\noindent\textbf{Axiom (TW3).} If there exist different elements $u,v,x$ such that $x\in R(u,v)$, then there exist $v_1 \in R(x,v), v_1 \neq x$ with $R(x,v_1) = \{x,v_1\}$ and $R(u,v_1) \neq \{u,v_1\}$.  \medskip

Notice that if $R(x,v)=\{x,v\}$, then $v_1=v$ when Axiom (TW3) holds. \medskip

The next axiom is a relaxation of Axiom (b1).\\
\noindent\textbf{Axiom  (b1').} If there exist elements $u,v,x\in V$ such that $x \in R(u,v), v\neq x$ and $R(v,x)\neq\{v,x\}$, then $v\notin R(u,x)$.\medskip

The following corollary is from \cite{lcp}

\begin{corollary}\label{ATDCH2-free}
The toll walk transit function $T$ on a graph $G$ satisfies Axiom (b1') if and only if $G$ is $AT$-free.
\end{corollary}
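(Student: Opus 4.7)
The plan is to use Lemma~\ref{toll1} as a direct bridge: for non-adjacent $a,b$, the lemma translates ``$w\in T(a,b)$'' into two non-separation conditions with respect to $N[a]$ and $N[b]$, and exactly three such non-separations describe an asteroidal triple on three pairwise non-adjacent vertices. I prove both directions by contradiction.

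For the ``only if'' direction, argue contrapositively: suppose $G$ contains an asteroidal triple $\{a,b,c\}$, witnessed by paths $P_{ab},P_{ac},P_{bc}$ avoiding $N[c],N[b],N[a]$ respectively, and set $(u,v,x)=(a,b,c)$. Three applications of Lemma~\ref{toll1} to the non-adjacent pairs $\{a,b\}$, $\{a,c\}$, $\{b,c\}$ show that $c\in T(a,b)$ (using $P_{bc}$ and $P_{ac}$), $b\in T(a,c)$ (using $P_{bc}$ and $P_{ab}$), and $a\in T(b,c)$ (using $P_{ac}$ and $P_{ab}$); in particular $T(b,c)\neq\{b,c\}$. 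Thus the hypotheses of (b1') at $(u,v,x)=(a,b,c)$ are satisfied, yet $v=b\in T(a,c)=T(u,x)$ violates its conclusion, so $T$ does not satisfy (b1').

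For the ``if'' direction, assume $T$ violates (b1'): there exist $u,v,x$ with $x\in T(u,v)$, $v\neq x$, $T(v,x)\neq\{v,x\}$, and $v\in T(u,x)$. First I establish a preliminary: if $ab\in E(G)$ then $T(a,b)=\{a,b\}$, because in any toll walk $w_1\cdots w_k$ with $w_1=a$ and $w_k=b$ the uniqueness conditions at $w_1$ and $w_k$ force $w_2=b$ and $w_{k-1}=a$, and inductively all interior vertices are forced to alternate between $a$ and $b$. Together with (t1)--(t3), this preliminary and the violation hypotheses force $u,v,x$ to be three distinct, pairwise non-adjacent vertices. Applying Lemma~\ref{toll1} to $x\in T(u,v)$ now produces a $u,x$-path avoiding $N[v]$; applied to $v\in T(u,x)$ it produces a $u,v$-path avoiding $N[x]$ and a $v,x$-path avoiding $N[u]$. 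These three paths certify that $\{u,v,x\}$ is an asteroidal triple of $G$, contradicting AT-freeness.

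The main obstacle is the pairwise-non-adjacency case analysis in the converse, which rests on the preliminary claim that $T(a,b)=\{a,b\}$ whenever $ab\in E(G)$; once these are in hand, both directions are a clean dictionary translation between Lemma~\ref{toll1} and the defining avoidance paths of an asteroidal triple.
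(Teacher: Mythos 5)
Your proof is correct. Note that the paper itself gives no proof of this corollary --- it is imported verbatim from \cite{lcp} --- so there is no in-paper argument to compare against; but your derivation is the natural one and checks out in full. Both directions are sound: in the forward direction the three avoidance paths of an asteroidal triple $\{a,b,c\}$ do translate, via Lemma~\ref{toll1}, into $c\in T(a,b)$, $b\in T(a,c)$ and $a\in T(b,c)$ (the implicit point that the paths avoid the closed neighborhoods, not just the open ones, holds because an internal occurrence of, say, $a$ on $P_{bc}$ would force path-neighbors of $a$ onto $P_{bc}$); and in the converse your preliminary claim that $ab\in E(G)$ forces every toll $a,b$-walk to alternate between $a$ and $b$, hence $T(a,b)=\{a,b\}$, is exactly what is needed to rule out adjacency among $u,v,x$ before Lemma~\ref{toll1} (which is stated only for non-adjacent pairs) can be invoked, and distinctness follows from (t3) as you indicate. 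The three paths you extract --- a $u,x$-path avoiding $N[v]$ from $x\in T(u,v)$, and a $u,v$-path avoiding $N[x]$ together with a $v,x$-path avoiding $N[u]$ from $v\in T(u,x)$ --- are precisely the witnesses in the paper's definition of an asteroidal triple, so the contradiction with AT-freeness is immediate.
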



\section{Toll walk transit function of chordal graphs}

We start with a slight modification of the Axioms (TW3) and (J0) to gain characterization of the toll walk function of chordal graphs.\medskip

\noindent\textbf{Axiom (TWC).} If there exist different elements $u,v,x$ such that $x\in R(u,v)$, then there exist $v_1 \in R(x,v), v_1 \neq x$ with $R(x,v_1) = \{x,v_1\}$, $R(u,v_1) \neq \{u,v_1\}$ and $x\notin R(v_1,v)$.  \medskip

\noindent\textbf{Axiom (JC).} If there exist different elements $u,x,y,v \in V$ such that $x\in R(u,y)$, $y\in R(x,v)$ and $R(x,y)= \{x,y\}$, then $x\in  R(u,v)$.\medskip

 From the definition of Axiom (TWC), it is clear that Axiom (TW3) implies Axiom (TWC), and Axiom (J0) implies Axiom (JC). Furthermore, the Axiom (JC) is symmetric with respect to $x$ and $y$ and at the same time $u$ and $v$ in the sense that we can exchange them. In addition,  it is easy to see that the toll walk transit function does not satisfy the Axiom (TWC) and (JC)  of an arbitrary graph. For instance, Axiom (JC) is not fulfilled on a four-cycle $uxyvu$ and Axiom (TWC) does not hold on a six-cycle $uxv_1avbu$. The next proposition shows that $T$ satisfies the axiom (TWC) on the chordal graphs.
 
\begin{proposition}\label{prop1}
The toll walk transit function satisfies Axiom (TWC) on chordal graphs.
\end{proposition}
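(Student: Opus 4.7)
The plan is to construct $v_1$ by splitting on whether $xv \in E(G)$. Note first that $u, v, x$ being distinct with $x \in T(u,v)$ forces $uv \notin E$, since otherwise $T(u,v) = \{u,v\}$ would not contain $x$. In the easy case $xv \in E$, choose $v_1 := v$: then $T(x,v_1) = \{x,v\}$, the set $T(u,v_1) = T(u,v)$ contains $x \neq u, v$ so $T(u,v_1) \neq \{u,v_1\}$, and $T(v_1,v) = \{v\}$ avoids $x$.

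Assume now $xv \notin E$. By Lemma \ref{toll1}, $N[u] \setminus \{x\}$ does not separate $x$ from $v$ in $G$, so take a shortest $x,v$-path $Q = x, q_1, \ldots, q_m = v$ in $G - (N[u] \setminus \{x\})$ and set $v_1 := q_1$. Any shortest path in an induced subgraph is induced in the ambient graph, so $Q$ is an induced path in $G$ and hence a toll walk; together with $v_1 \in N(x) \setminus N[u]$ this yields $v_1 \in T(x,v)$, $T(x,v_1) = \{x,v_1\}$, and $T(u,v_1) \neq \{u,v_1\}$. If $v_1 v \in E$ (i.e., $m = 2$) then $T(v_1,v) = \{v_1, v\}$ excludes $x$, so we may further assume $v_1 v \notin E$ and $m \geq 3$.

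Suppose for contradiction that $x \in T(v_1,v)$. Lemma \ref{toll1} then provides an induced $x,v$-path $P = x, p_1, \ldots, p_s = v$ in $G - (N[v_1] \setminus \{x\})$ with $s \geq 2$. Let $j_0$ be the least index $\geq 1$ with $p_{j_0} \in V(Q)$, say $p_{j_0} = q_{i_0}$. Then $i_0 \geq 2$ (since $v_1 \notin V(P)$) and $j_0 \geq 2$ (since the only vertex of $Q$ adjacent to $x$ is $q_1 = v_1$, while $p_1 \neq v_1$). The closed walk $C = x, q_1, \ldots, q_{i_0}=p_{j_0}, p_{j_0-1}, \ldots, p_1, x$ is then a simple cycle of length $i_0 + j_0 \geq 4$. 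Chords of $C$ inside the $Q$-side or the $P$-side are excluded by induced-ness; chords of the form $v_1 p_b$ are excluded because $p_b \notin N[v_1]$; so every chord of $C$ has the form $q_a p_b$ with $a \in [2, i_0-1]$ and $b \in [1, j_0-1]$. Chordality guarantees at least one such chord; pick one with $(a,b)$ lexicographically minimal. The sub-cycle $C' = x, q_1, \ldots, q_a, p_b, \ldots, p_1, x$ has length $a + b + 1 \geq 4$, so chordality again demands a chord of $C'$; but every such chord $q_{a'} p_{b'}$ is again of the same form with $a' \leq a$, $b' \leq b$, and $(a',b') \neq (a,b)$, contradicting the minimality of $(a,b)$.

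The main obstacle is packaging the chordal decomposition cleanly: the choice of minimal $j_0$ is what makes $C$ genuinely simple, and the base case $|C'| = 4$ (i.e., $a = 2$, $b = 1$) is where the argument bottoms out, since there the only candidate chords are $xq_2$ and $v_1 p_1$, both ruled out respectively by induced-ness of $Q$ and by $P$'s avoidance of $N[v_1]$.
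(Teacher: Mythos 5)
Your proof is correct and follows essentially the same route as the paper's: take $v_1$ to be the neighbor of $x$ on an induced $x,v$-path avoiding $N[u]\setminus\{x\}$ (via Lemma~\ref{toll1}), then, assuming $x\in T(v_1,v)$, combine that path with an induced $x,v$-path avoiding $N[v_1]\setminus\{x\}$ to produce a chordless cycle of length at least four. The only difference is that where the paper briefly asserts that $x_1xv_1v_2$ ``is part of a larger induced cycle,'' you make this step fully explicit with the first-intersection and lexicographically minimal chord argument, which is a welcome tightening rather than a change of method.
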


\begin{proof}
  Suppose $x\in T(u,v)$. There exists an induced $x,v$-path $P$ that avoids the neighborhood of $u$ with the possible exception of $x$. For the neighbor $v_1$ of $x$ on $P$ it follows that $v_1 \in T(x,v), v_1 \neq x$ with $T(x,v_1) = \{x,v_1\}$ and $T(u,v_1) \neq \{u,v_1\}$. If $v_1=v$, then clearly $x\notin T(v,v_1)=\{v\}$. Similarly, if $T(v_1, v)=\{v_1, v\}$, then $x\notin T(v_1,v)$. Consider next that $T(v_1, v)\neq \{v_1, v\}$. We will show that $x\notin T(v_1,v)$ for a chordal graph $G$. To avoid contradiction, assume that $x\in T(v_1,v)$. There exists an induced $x,v$-path $Q$ that avoids the neighborhood of $v_1$. Let $x_1$ be the neighbor of $x$ on $Q$. Clearly, $x_1v_1 \notin E(G)$. Let $v_2\neq v$ be the neighbor of $v_1$ on $P$ that exists since $T(v_1, v)\neq \{v_1, v\}$. Since $P$ is induced $T(x,v_2)\neq \{x, v_2\}$. If $x_1$ is adjacent to $v_2$, then $xv_1v_2x_1x$ is an induced four-cycle. Otherwise, the path $x_1xv_1v_2$ is part of a larger induced cycle of length greater than four (together with some other vertices of $P$ or $Q$).  Both are not possible in chordal graphs. Hence, $x\notin T(v_1,v)$ and $T$ satisfies Axiom (TWC) on chordal graphs. \hfill\qed\medskip
 \end{proof}
 
\begin{theorem}\label{ch}
The toll walk transit function $T$ satisfies Axiom (JC) on a graph $G$ if and only if $G$ is a chordal graph.
\end{theorem}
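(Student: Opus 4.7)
I argue the contrapositive: if $G$ contains an induced cycle $c_1 c_2 \cdots c_k c_1$ with $k \geq 4$, then $T$ violates (JC). Set $u = c_1$, $x = c_2$, $y = c_3$, $v = c_k$. Since the cycle is induced, the walks $c_1 c_2 c_3$ and $c_2 c_3 \cdots c_k$ are toll, so $x \in T(u,y)$ and $y \in T(x,v)$; the edge $xy$ gives $T(x,y) = \{x,y\}$. But $c_1 c_k \in E(G)$ forces $T(u,v) = \{u,v\}$, whence $x \notin T(u,v)$ and (JC) fails.

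\textbf{Backward direction.} Assume $G$ is chordal and the hypotheses of (JC) hold for distinct $u, x, y, v$. Since $T(x,y) = \{x,y\}$ and $x,y$ are connected (via $x \in T(u,y)$), we must have $xy \in E(G)$: any toll walk between adjacent vertices has length $1$, since otherwise the second endpoint would be a second neighbour of the first on the walk. Consequently $uy, xv \notin E(G)$, and in any toll walk witnessing $x \in T(u, y)$ the vertex $x$ is the penultimate vertex (else $x$ would be a second neighbour of $y$ on the walk). Lemma~\ref{toll1} yields a $u, x$-path in $G - (N[y] \setminus \{x\})$; taking a shortest such path and appending the edge $xy$ produces an induced toll walk $W_1 = u, u_1, \ldots, u_{k-3}, x, y$ (degenerating to $u, x, y$ if $ux \in E$). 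A symmetric construction yields an induced toll walk $W_2 = x, y, b_2, \ldots, b_{m-2}, v$.

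Form the concatenation $W^{\star} = u, u_1, \ldots, u_{k-3}, x, y, b_2, \ldots, b_{m-2}, v$. I claim $W^{\star}$ is an induced $u, v$-path. The toll and induced-path conditions on $W_1, W_2$ eliminate every potential chord lying inside $W_1$ or inside $W_2$, together with any chord from $u$ to a later vertex of $W_1$, from $v$ to an earlier vertex of $W_2$, from $y$ into the interior of $W_1$, or from $x$ into the interior of $W_2$; the hypotheses $uy, xv \notin E$ handle the remaining direct chords. The only surviving candidates are cross-edges $u_i b_j$ with $i \in \{0,\ldots,k-3\}$ and $j \in \{2,\ldots,m-1\}$ (the pair $(i,j) = (0, m-1)$ being $uv$). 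If any such cross-chord exists, pick one minimising $\ell = k + j - i - 1$; the cycle $u_i, u_{i+1}, \ldots, u_{k-3}, x, y, b_2, \ldots, b_j, u_i$ is induced, since any interior chord would be either one of the previously excluded kinds or a cross-chord with strictly smaller $\ell$, violating minimality. Its length $\ell$ is at least $4$, the minimum being attained at $(i,j) = (k-3, 2)$, contradicting chordality. Hence no cross-chord exists; in particular $uv \notin E$, so $W^{\star}$ is an induced $u, v$-path, therefore a toll walk through $x$, giving $x \in T(u,v)$.

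The main obstacle is the chord bookkeeping in $W^{\star}$: one has to verify that only cross-edges survive the toll and induced-path properties, and then invoke the minimality argument to extract from any cross-chord an induced cycle of length at least $4$, contradicting chordality.
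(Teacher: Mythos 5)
Your forward direction is correct and coincides with the paper's. Your converse is assembled from the same ingredients as the paper's proof (the two paths furnished by Lemma~\ref{toll1}, glued along the edge $xy$, followed by a minimal-chord argument that extracts an induced cycle of length at least four), but you run it constructively, exhibiting an induced $u,v$-path through $x$, whereas the paper argues by contradiction from the separation that $x\notin T(u,v)$ would impose; both routes are legitimate. (A small slip: your justification that $T(x,y)=\{x,y\}$ forces $xy\in E(G)$ argues the converse implication; the right reason is that two connected non-adjacent vertices admit a shortest path between them, which is a toll walk with an interior vertex.)

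There is, however, one unhandled case in the key claim that $W^{\star}$ is an induced $u,v$-path: your bookkeeping only rules out \emph{chords}, and tacitly assumes that $W^{\star}$ is a path in the first place, i.e.\ that $W_1$ and $W_2$ meet only in $x$ and $y$. Nothing in the construction guarantees this: a vertex $w\notin N[x]\cup N[y]$ can a priori lie both on the chosen $u,x$-path avoiding $N[y]\setminus\{x\}$ and on the chosen $y,v$-path avoiding $N[x]\setminus\{y\}$. If some $u_i$ equals some $b_j$, then $W^{\star}$ repeats a vertex, the list of ``surviving candidates'' (cross-\emph{edges} only) is no longer exhaustive, and the cycles you build from a minimal cross-chord need not be cycles at all. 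The case is repairable by the very device you already use: take a shared vertex $w\neq x,y$ closest to $x$ along $W_1$; the closed walk that goes from $w$ to $x$ along $W_1$, across the edge $xy$, and from $y$ back to $w$ along $W_2$ is then a genuine cycle of length at least five (both segments have length at least two because $w\notin N[x]\cup N[y]$, and they are internally disjoint by the choice of $w$), its only possible chords are again cross-edges between the two segments, and minimising over those produces an induced cycle of length at least four, contradicting chordality. Once shared vertices are excluded in this way, your argument goes through.
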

\begin{proof}
 Suppose that $G$ contains an induced cycle $C_n$, $n\geq 4$, with consecutive vertices $y,x,u,v$ of $C_n$. Clearly $x\in T(u,y)$, $y\in T(x,v)$, and $T(x,y)=\{x,y\}$ but $x\notin T(u,v)$ since $uv$ is an edge in $G$. That is, if $T$ satisfies Axiom (JC), then $G$ is $C_n$-free for $n\geq 4$.

 Conversely, suppose that $T$  does not satisfy Axiom (JC) on $G$. There exist distinct vertices $u,x,y,v$ such that $x\in T(u,y)$, $y\in T(x,v)$, $T(x,y)= \{x,y\}$ and $x\notin T(u,v)$. Clearly, $x,y,u,v$ belong to the same connected component and there exists an induced $u,x$-path $P$ and an induced $v,y$-path $Q$. Moreover, by $x\in T(u,y)$ we may assume that the only neighbor of $y$ on $P$ is $x$. Similarly, by $y\in T(x,v)$ we may assume that the only neighbor of $x$ on $Q$ is $y$. Now, $x\notin T(u,v)$ implies that $N(u)-x$ separate $x$ from $v$ or $N(v)-x$  separate $u$ from $x$ by the lemma \ref{toll1}. 
 
 By the symmetry of Axiom (JC), we may assume that $N(u)-x$ separates $x$ from $v$. So, every $x,v$-path contains at least one neighbor of $u$. But $x$ belongs to a $u,v$-walk, say $W$, formed by $P$, the edge $xy$ and $Q$. Since $x\notin T(u,v)$, there exists a neighbor of $u$, say $u_1\neq y$, that belongs to $Q$. We may choose $u_1$ to be the first vertex on $Q$ that is adjacent to $u$ after $y$.

 If the cycle $u\xrightarrow{P}xy\xrightarrow{Q} u_1u$ is induced, then $G$ is not chordal, and we are done. Otherwise, there must be some chords from the vertices of $P$ to the vertices of $Q$ different from $y$.  Let $a$ be the last vertex on $P$ before $x$ that is adjacent to some vertex, say $b$, on the $u_1,y$-subpath of $Q$. We may choose $b$ to be closest to $y$ on $Q$ among all such vertices. Since $ab\in E(G)$, we have $b\neq y$ and $a\xrightarrow{P} xy\xrightarrow{Q}ba$ is an induced cycle of length at least four. So, $G$ is not chordal and we are done again.\hfill\qed\medskip
\end{proof}

\begin{lemma}\label{easy1}
Let $R$ be a transit function on a non-empty finite set $V$ satisfying Axioms (J2),  (JC) and  (TW2). If $P_n$, $n\geq 2$, is an induced $u,v$-path in $G_R$, then $V(P_n)\subseteq R(u,v)$. Moreover, if $z$ is adjacent to an inner vertex of $P_n$ that is not adjacent to $u$ or to $v$ in $G_R$, then $z\in R(u,v)$. 
\end{lemma}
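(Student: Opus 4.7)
My plan is to establish the first statement by induction on $n$, using Axiom (J2) for the base case and Axiom (JC) for the inductive step, and then to read off the moreover part as a direct application of Axiom (TW2).

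For the base cases, $n=2$ is immediate from (t1), and for $n=3$ with $P_3 = u,x,v$ all premises of (J2) are immediate: $R(u,x)=\{u,x\}$, $R(x,v)=\{x,v\}$, $u\neq v$, and $R(u,v)\neq\{u,v\}$ because $uv\notin E(G_R)$; hence $x\in R(u,v)$. For the inductive step with $n\ge 4$, write $P_n=w_1w_2\cdots w_n$ and, for each $i\in\{2,\dots,n-2\}$, apply Axiom (JC) with the tuple $(u,w_i,w_{i+1},v)$. The edge condition $R(w_i,w_{i+1})=\{w_i,w_{i+1}\}$ and the distinctness of the four elements are immediate; the two nontrivial premises $w_i\in R(u,w_{i+1})$ and $w_{i+1}\in R(w_i,v)$ follow from the inductive hypothesis applied to the strictly shorter induced sub-paths $u,w_2,\dots,w_{i+1}$ and $w_i,w_{i+1},\dots,w_n$, respectively. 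Axiom (JC) then places $w_i$ in $R(u,v)$, and by the $(x,y,u,v)$-symmetry of (JC) noted just before Proposition~\ref{prop1} the same application also places $w_{i+1}$ in $R(u,v)$. Letting $i$ range over $\{2,\dots,n-2\}$ captures every interior vertex $w_2,\dots,w_{n-1}$ in a single uniform sweep, and together with $u,v\in R(u,v)$ from (t1) this yields $V(P_n)\subseteq R(u,v)$.

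For the moreover part, given an inner vertex $x$ of $P_n$ with $xu,xv\notin E(G_R)$ and a neighbor $z$ of $x$ in $G_R$, the first part delivers $x\in R(u,v)$; the hypothesis on $x$ translates into $R(u,x)\neq\{u,x\}$ and $R(x,v)\neq\{x,v\}$, while the edge $xz$ yields $R(x,z)=\{x,z\}$. Axiom (TW2) then outputs $z\in R(u,v)$, as required.

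The main obstacle is the bookkeeping in the inductive step: one has to ensure simultaneously that both sub-paths fed into the inductive hypothesis are strictly shorter than $P_n$ and that the four elements $u,w_i,w_{i+1},v$ are distinct, which is exactly what forces the restriction $i\in\{2,\dots,n-2\}$. Exploiting the symmetry of (JC) is what absorbs what would otherwise be a separate treatment of the penultimate vertex $w_{n-1}$; without it one would need an auxiliary symmetric application of (JC) obtained by reversing the path.
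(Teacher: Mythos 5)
Your proof is correct and follows essentially the same route as the paper's: base cases via Axiom (J2), the inductive step via Axiom (JC) applied to consecutive interior pairs $(w_i,w_{i+1})$ with the two memberships supplied by the induction hypothesis on shorter induced subpaths (using the noted symmetry of (JC) to capture both vertices), and Axiom (TW2) for the moreover part. If anything, your bookkeeping of which subpaths the induction hypothesis is applied to is slightly more explicit than the paper's.
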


\begin{proof}
If $n=2$, then $P_2=uv$ and $R(u,v)=\{u,v\}$ by the definition of $G_R$. If $n=3$, then $P_3=ux_1v$ and $x_1\in R(u,v)$ by Axiom (J2). For $n\geq 4$ we continue by induction. For the basis, let $n=4$ and $P_4=ux_1x_2v$. By Axiom (J2) we have $x_1\in R(u,x_2)$ and $x_2\in R(x_1,v)$. Now, the Axiom (JC) implies that $x_1,x_2\in R(u,v)$. 
Let now $n>4$ and $P_n=ux_1x_2\dots x_{n-1}v$. By the induction hypothesis we have $\{u,x_1,x_2,\dots,x_{n-1}\}\subseteq R(u,x_{n-1})$ and $\{x_1,x_2,\dots,x_{n-1},v\}\subseteq R(x_1,v)$. That is, $x_i\in R(u,x_{i+1})$ and $x_{i+1}\in R(x_i,v)$ for every $i\in [n-2]$. By Axiom (JC) we get $x_i,x_{i+1}\in R(u,v)$ for every $i\in [n-2]$.

For the second part, let $z$ be a neighbor of $x_i$, $i\in\{2,\dots,n-2\}$ that is not adjacent to $u,v$. Clearly, in this case, $n\geq 5$. By the first part of the proof, we have $x_i\in R(u,v)$ and we have $z\in R(u,v)$ by Axiom (TW2).  \qed\medskip
\end{proof}

\begin{proposition}\label{jcj2}
Let $R$ be any transit function defined on a non-empty set $V$. If $R$ satisfies $(JC)$ and $(J2)$, then $G_R$ is chordal.
\end{proposition}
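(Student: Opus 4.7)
The plan is to argue by contradiction. Suppose $G_R$ contains an induced cycle of length $n\ge 4$ with consecutive vertices $v_0,v_1,\dots,v_{n-1}$ (indices read modulo $n$). By the definition of $G_R$, two vertices $v_i,v_j$ of the cycle are adjacent in $G_R$ if and only if $R(v_i,v_j)=\{v_i,v_j\}$, so $R(v_i,v_{i+1})=\{v_i,v_{i+1}\}$ for every $i$, while $R(v_i,v_j)\neq\{v_i,v_j\}$ whenever $v_iv_j$ is a non-edge of the induced cycle. The strategy is to show $v_1\in R(v_0,v_{n-1})$; since $R(v_0,v_{n-1})=\{v_0,v_{n-1}\}$, this will force $v_1\in\{v_0,v_{n-1}\}$, contradicting the distinctness of the cycle vertices.

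The first step is to establish the ``short chord-jump'' $v_{i+1}\in R(v_i,v_{i+2})$ for every $i$ by a direct application of Axiom (J2): the hypotheses $R(v_i,v_{i+1})=\{v_i,v_{i+1}\}$, $R(v_{i+1},v_{i+2})=\{v_{i+1},v_{i+2}\}$, $v_i\neq v_{i+2}$, and $R(v_i,v_{i+2})\neq\{v_i,v_{i+2}\}$ all hold (the last because $v_iv_{i+2}$ is a non-edge of the induced $n$-cycle with $n\ge 4$).

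Next, I would prove by induction on $m$ the statement $S(m)$: for every index $i$, $v_{i+1}\in R(v_i,v_{i+m})$, where $2\le m\le n-1$. The base case $m=2$ is the previous step. For the inductive step with $m\ge 3$, applying $S(m-1)$ at the shifted index $i+1$ gives $v_{i+2}\in R(v_{i+1},v_{i+m})$. Now Axiom (JC) with $u=v_i$, $x=v_{i+1}$, $y=v_{i+2}$, $v=v_{i+m}$ requires $x\in R(u,y)$ (the base case at index $i$), $y\in R(x,v)$ (the induction hypothesis at index $i+1$), and $R(x,y)=\{x,y\}$ (an edge of the cycle); the four arguments $v_i,v_{i+1},v_{i+2},v_{i+m}$ are pairwise distinct since $2\le m\le n-1$. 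Thus (JC) yields $v_{i+1}\in R(v_i,v_{i+m})$, completing the induction.

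Specializing $S(n-1)$ at $i=0$ gives $v_1\in R(v_0,v_{n-1})=\{v_0,v_{n-1}\}$, the required contradiction. No serious obstacle is anticipated here; the only points demanding care are verifying the distinctness of the four arguments in each invocation of (JC) and the non-edge condition $R(v_i,v_{i+2})\neq\{v_i,v_{i+2}\}$ in each invocation of (J2), both of which follow immediately from the cycle being induced with $n\ge 4$.
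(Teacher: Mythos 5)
Your proof is correct and follows essentially the same strategy as the paper's: assume an induced cycle, use (J2) to get $v_{i+1}\in R(v_i,v_{i+2})$, and then propagate membership along the cycle by iterated applications of (JC) until it collides with the closing edge $R(v_0,v_{n-1})=\{v_0,v_{n-1}\}$. The only cosmetic difference is that you carry out the induction directly on the cycle (the statement $S(m)$), whereas the paper delegates the analogous induction to its Lemma~\ref{easy1} on induced paths and then applies (JC) once more; both verifications of the distinctness and non-edge side conditions are handled the same way.
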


\begin{proof}
Let $R$ be a transit function satisfying $(JC)$ and $(J2)$.  Assume on the contrary that $G_R$ contains an induced cycle, say $C_k=u_1u_2\ldots u_ku_1$, for some $k\geq 4$. Let us first $k=4$. Since $R(u_1,u_2)=\{u_1,u_2\}$ and $R(u_2,u_3)=\{u_2,u_3\}$, we have $u_2\in R(u_1,u_3)$ by Axiom (J2). Similar $u_3\in R(u_2,u_4)$ holds.  Since $R$ satisfies Axiom $(JC)$ we have $u_2\in R(u_1,u_4)$, which is a contradiction as $R(u_1,u_4)=\{u_1,u_4\}$.

Let now $k\geq 5$. Similar to the above, we have $u_{i+1}\in R(u_{i},u_{i+2})$ for every $i\in [n-2]$ and, in particular, $u_{n-1}\in R(u_{n-2},u_n)$. By Lemma \ref{easy1} we have $u_{n-2}\in R(u_{n-1},u_1)$. Now, $u_{n-1}\in R(u_{n-2},u_n)$, $u_{n-2}\in R(u_{n-1},u_1)$ and $R(u_{n-1},u_{n-2})=\{u_{n-1},u_{n-2}\}$ imply that $u_{n-1}\in R(u_1,u_n)$ by Axiom (JC), a contradiction to $R(u_1,u_n)=\{u_1,u_n\}$. \hfill\qed\medskip 
\end{proof}

\begin{theorem} \label{chord}
If $R$ is a transit function on a non-empty finite set $V$ that satisfies Axioms  (b2),  (J2), (JC),  (TW1), (TW2) and (TWC), then $R=T$ on $G_R$. 
\end{theorem}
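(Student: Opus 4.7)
The plan is to verify both inclusions $R(u, v) \subseteq T(u, v)$ and $T(u, v) \subseteq R(u, v)$ for every pair $u, v \in V$. A preliminary observation: Proposition \ref{jcj2} makes $G_R$ chordal, so Lemma \ref{easy1} is freely available throughout the argument.

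For $R(u, v) \subseteq T(u, v)$, fix $x \in R(u, v)$ and set aside the trivial cases $x \in \{u, v\}$ and $R(u, v) = \{u, v\}$. I would iterate Axiom (TWC) starting from $x_0 = x$: at each step (TWC) produces $x_{i+1}$ with $R(x_i, x_{i+1}) = \{x_i, x_{i+1}\}$, $R(u, x_{i+1}) \neq \{u, x_{i+1}\}$, $R(x_{i+1}, v) \subsetneq R(x_i, v)$ (from (b2) applied to $v_1 \in R(x_i, v)$ together with the hypothesis $x_i \notin R(x_{i+1}, v)$), and $x_{i+1} \in R(u, v)$ (by (b2) and (t2) applied to $x_i \in R(u, v)$). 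Since $V$ is finite the process terminates, and since no $x_i$ for $i \ge 1$ is adjacent to $u$ in $G_R$, termination must occur at $v$. The resulting sequence is a path in $G_R$ from $x$ to $v$ avoiding $N[u] - \{x\}$; the analogous construction with the roles of $u$ and $v$ exchanged gives a path from $x$ to $u$ avoiding $N[v] - \{x\}$. Lemma \ref{toll1} now yields $x \in T(u, v)$.

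For $T(u, v) \subseteq R(u, v)$, fix $x \in T(u, v)$, again set aside the trivial cases, and use Lemma \ref{toll1} to produce induced paths $P_1 = x, x_1, \ldots, x_k = v$ avoiding $N[u] - \{x\}$ and $P_2 = x, y_1, \ldots, y_m = u$ avoiding $N[v] - \{x\}$. If $ux \in E(G_R)$ (so $m = 1$), the concatenation $u, x, x_1, \ldots, x_k$ is an induced $u, v$-path in $G_R$ (since $u x_i \notin E(G_R)$ for $i \ge 1$), and Lemma \ref{easy1} delivers $x \in R(u, v)$; the case $vx \in E(G_R)$ is symmetric. Otherwise both $P_1$, $P_2$ have length at least two; set $a = y_1$ and $b = x_1$, so that $a, b$ are $G_R$-neighbors of $x$ with $av \notin E(G_R)$, $bu \notin E(G_R)$, and $a, b \notin \{u, v\}$. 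I would now use chordality of $G_R$ on the walk $u, y_{m-1}, \ldots, a, x, b, \ldots, x_k$: iterated shrinking of any cross chord $y_i x_j$ shows the walk is either already induced (Lemma \ref{easy1} concludes with $x$ as inner vertex) or forces the chord $y_1 x_1$ to exist. In the latter case the bypass $u, y_{m-1}, \ldots, y_1, x_1, \ldots, x_k$ is an induced $u, v$-path, and one either invokes Lemma \ref{easy1} (if one of $a, b$ is an inner vertex non-adjacent to $\{u, v\}$) or, in the shortest subcase $m = k = 2$ with induced bypass $u, a, b, v$, applies Axiom (TW1) with $a, b$ as the two witnesses from $R(u, v)$ (provided by Lemma \ref{easy1}) and $z = x$. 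The subcase $a = b =: c$ is handled separately: the avoidance conditions force both $P_1, P_2$ to have length at least three and $c$ to be non-adjacent to $u$ and $v$, so that $c$ is an inner vertex of a suitable induced $u, v$-path obtained after discarding $x$, and the neighbor clause of Lemma \ref{easy1} gives $x \in R(u, v)$.

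The main obstacle I expect is the chord-shrinking argument in the reverse inclusion. Precisely, one must verify that in a chordal $G_R$ any cycle built from the induced paths $P_1, P_2$ together with a cross chord $y_i x_j$ contains a smaller cross chord $y_{i'} x_{j'}$, so that repeated shrinking eventually forces the minimal cross chord to be $y_1 x_1$. The other delicate point is the subcase $y_1 = x_1$, where the avoidance constraints from Lemma \ref{toll1} must be used carefully to force both $P_1$ and $P_2$ to have length at least three; this in turn makes the shared neighbor $c$ non-adjacent to both $u$ and $v$ and therefore amenable to the neighbor clause of Lemma \ref{easy1}.
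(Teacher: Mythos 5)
Your proposal is correct and follows essentially the same route as the paper: the inclusion $R(u,v)\subseteq T(u,v)$ is obtained by iterating Axiom (TWC) together with (b2) to build the two witness paths, and the reverse inclusion rests on chordality of $G_R$ (Proposition~\ref{jcj2}), Lemma~\ref{easy1} and Axioms (TW1)/(TW2), exactly as in the paper. The only divergence is organizational --- you glue the two paths supplied by Lemma~\ref{toll1} at $x$ and shrink cross chords, whereas the paper anchors $x$ to an induced $u,v$-path extracted from the toll walk itself --- and the chord-control details you flag as delicate are of the same kind the paper handles (rather tersely) via $C_n$-freeness.
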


\begin{proof}
Let $u$ and $v$ be two distinct vertices of $G_R$ and first assume that $x\in R(u,v)$. We have to show that $x\in T(u,v)$ on $G_R$. Clearly $x\in T(u,v)$ whenever $x\in\{u,v\}$. Moreover, if $R(u,v)=\{u,v\}$, then $x$ must be $u$ or $v$. So, assume that $x\notin\{u,v\}$ and that $uv\notin E(G_R)$. If $R(u,x)=\{u,x\}$ and $R(x,v)=\{x,v\}$, then $uxv$ is a toll walk of $G_R$ and $x\in T(u,v)$ follows. Suppose next that $R(x,v)\neq\{x,v\}$. We will construct an $x,v$-path $Q$ in $G_R$ without a neighbor of $u$ (except possibly $x$). For this, let $x=v_0$. By Axiom (TWC) there exists a neighbor of $v_0$, say $v_1$ and $v_1\in R(v_0,v)$ with $R(u,v_1)\neq\{u,v_1\}$ and $v_0 \notin R(v_1,v)$. Since $v_1\in R(v_0,v)$, we have $R(v_1,v)\subseteq R(v_0,v)$ by Axiom (b2) and since $v_0 \notin R(v_1,v)$  we have $R(v_1,v)\subset R(v_0,v)$. In particular, $v_1\in R(v_0,v)\subseteq R(u,v)$ by Axiom (b2). If $v_1\neq v$, then we can continue with the same procedure to get $v_2\in R(v_1, v)$, where $v_2\neq u$, $R(v_1,v_2)=\{v_1,v_2\}$, $R(u,v_2)\neq \{u,v_2\}$ and $v_1 \notin R(v_2,v)$. Furthermore, $R(v_2,v)\subset R(v_1,v)\subset R(v_0,v)$ and $v_2\in R(u,v)$.  Similarly (when $v_2\neq v$), we get $v_3\in R(v_2,v)$ such that $v_3\neq u$, $R(v_2,v_3)=\{v_2,v_3\}$, $R(u,v_3)\neq \{v_1,v_3\}$, $v_2\notin R(v_3,v)$, $v_3\in R(u,v)$ and $R(v_3,v)\subset R(v_2,v)\subset R(v_1,v)\subset R(v_0,v)$. 
Repeating this step, we obtain a sequence of vertices $v_0,v_1,\dots,v_q$, $q\geq 2$, such that
\begin{enumerate}
    \item $R(v_i,v_{i+1})=\{v_i,v_{i+1}\}, i\in\{0,1,\dots,q-1\}$,
    \item $R(u,v_i) \neq \{u,v_i\}, i\in [q]$,
    \item $R(v_{i+1},v) \subset R(v_i,v), i\in\{0,1,\dots,q-1\}$.
\end{enumerate} 
This sequence must stop under the last condition because $V$ is finite. Hence, we may assume that $v_q=v$. Now, if $R(u,x)=\{u,x\}$, then we have a toll $u,v$-walk $uxv_1\dots v_{q-1}v$ and $x\in T(u,v)$. 

If $R(u,x)\neq\{u,x\}$, we can symmetrically build a sequence $u_0,u_1,\dots,u_r$, where $u_0=x$, $u_r=u$, and $u_0u_1\dots u_r$ is a $x,u$-path in $G_R$ that avoids $N[v]$. Clearly, $uu_{r-1}u_{r-2}\dots u_1xv_1\dots v_{q-1}v$ is a toll $u,v$-walk and $x\in T(u,v)$.

Now suppose that $x \in T(u,v)$ and $x\notin\{u,v\}$. We have to show that $x\in R(u,v)$. Let $W$ be a toll $u,v$-walk containing $x$. Clearly, $W$ contains an induced $u,v$-path, say $Q$. If $x$ belongs to $Q$, then $x\in R(u,v)$ by Lemma~\ref{easy1}. So, we may assume that $x$ does not belong to $Q$. The graph $G_R$ is chordal by Proposition~\ref{jcj2}. Let $x_1x_2\dots x_{\ell}$, $x_1\in Q$, $x_{\ell}=x$ be a subpath of $W$ where $x_1$ is the only vertex of $Q$. If $R(u,x_1)=\{u,x_1\}$ and $R(x_1,v)= \{x_1,v\}$, then we have a contradiction with $W$ being a toll $u,v$-walk containing $x$. Without loss of generality, we may assume that $R(x_1,v)\neq \{x_1,v\}$. If also $R(u,x_1)\neq \{u,x_1\}$, then $x\in R(u,v)$ by continuous application of Axiom (TW2) $\ell-1$ times on $x_2,\dots x_{\ell}$. So, let now $R(u,x_1)=\{u,x_1\}$. Since $x$ and $v$ are not separated by $N[u]-\{x\}$ by Lemma \ref{toll1}, there exists an induced $x,v$-path $S$ without a neighbor of $u$. Let $S=s_0s_1\cdots s_k$, $s_0=x$, and $s_k=v$ and let $s_j$ be the first vertex of $S$ that also belongs to $Q$. Let $b$ be the neighbor of $x_1$ on $Q$ different from $u$. By $R(x_1,v)\neq\{x_1,v\}$ we have $b\neq v$. Since $G_R$ is $C_n$ free for every $n\geq4$, the vertex $s_j$ equals $b$ and ($s_{j-1}$ is adjacent to $a$ or $x_2$ is adjacent to $b$). This gives $s_{j-1}\in R(u,v)$ or $x_2\in R(u,v)$, respectively, by Axiom (TW1). Then by continuous application of Axiom (TW2) we have $s_i\in R(u,v)$ for every $i\in\{j-2,j-3,\dots,1,0\}$ or $x_i\in R(u,v)$ for every $i\in\{3,\dots,\ell\}$, respectively. Therefore, $x=s_0=x_{\ell}\in R(u,v)$ and the proof is complete. \qed\medskip
\end{proof}

\begin{proposition}\label{prop2}
Let $T$ be the toll walk transit function on a connected graph $G$. If $T$ satisfies Axiom (JC) on $G$, then $T$ satisfies Axiom (b2).
\end{proposition}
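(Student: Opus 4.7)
By Theorem~\ref{ch}, the hypothesis that $T$ satisfies (JC) on $G$ is equivalent to $G$ being chordal, so it suffices to prove that the toll walk transit function on any chordal graph satisfies (b2). Let $x \in T(u,v)$ and $y \in T(u,x)$; the goal is $y \in T(u,v)$. I would first dispose of the trivial cases in which $u,v,x,y$ are not pairwise distinct, or $uv \in E(G)$, or $ux \in E(G)$: each forces the conclusion immediately. So assume $u,v,x,y$ are pairwise distinct and $uv, ux \notin E(G)$. Lemma~\ref{toll1} applied to the two hypotheses furnishes induced paths $P_1 \colon x \to v$ avoiding $N[u]-\{x\}$, $P_2 \colon x \to u$ avoiding $N[v]-\{x\}$, $P_3 \colon y \to x$ avoiding $N[u]-\{y\}$, and $P_4 \colon y \to u$ avoiding $N[x]-\{y\}$. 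Applied in the other direction, Lemma~\ref{toll1} reduces $y \in T(u,v)$ to showing that (i) $N[u]-\{y\}$ does not separate $y$ from $v$, and (ii) $N[v]-\{y\}$ does not separate $y$ from $u$.

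For (i), I concatenate $P_3$ and $P_1$ at $x$: any vertex of the resulting $y,v$-walk lying in $N[u]$ comes from $V(P_1) \cap N[u] \subseteq \{x\}$, and since $x \notin N[u]$ the walk avoids $N[u]-\{y\}$ altogether, giving (i).

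For (ii) I plan to argue by contradiction. Assume $N[v]-\{y\}$ separates $y$ from $u$ and let $S \subseteq N[v]-\{y\}$ be a minimal such separator; chordality of $G$ forces $S$ to be a clique. The restriction on $P_2$ yields $V(P_2) \cap S \subseteq V(P_2) \cap N[v] \subseteq \{x\}$, while $P_4$ must cross $S$ at some vertex $w$ with $w \in N[v]$ and $w \notin N[x]$. The clique structure of $S$ then forces $x \notin S$ (else $x$ and $w$ would be adjacent members of $S$, putting $w \in N[x]$). So $x$ lies in the component $D$ of $G - S$ containing $u$, $y$ lies in a different component $C$, and $P_3$ must also cross $S$. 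Letting $z$ be the last vertex of $P_3$ (closest to $x$) that lies in $N[v]-\{y\}$, its successor on $P_3$ is adjacent to $z$ but lies outside $N[v]-\{y\}$, which rules out $z = v$ and therefore forces $z \in N(v)$. Combining $v$, $z$, the successor of $z$ on $P_3$, the tail of $P_3$ from that successor to $x$, the path $P_2$, and a neighbor of $v$ in $D$ supplied by $S$, the plan is to either extract the desired $y,u$-path in $G-(N[v]-\{y\})$ directly, or else to assemble an induced cycle of length at least four in $G$ passing through $v$, contradicting chordality.

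The main obstacle is exactly this final cycle-construction in (ii). The clique property of $S$ is indispensable, since it rules out a great many would-be chords between separator vertices, but to extract a genuinely chordless cycle one has to select its vertices carefully and split cases according to whether $x \in N(v)$ or $x \notin N(v)$; the case $x \in N(v)$ is more delicate because $x$ then belongs to $N[v]-\{y\}$ and a detour around $x$ (using $P_4$ in place of $P_3$) is required. I expect the bulk of the technical work of the proof to be devoted to verifying the absence of the potential chords along the proposed cycle.
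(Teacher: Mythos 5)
Your setup matches the paper's: reduce to chordal $G$ via Theorem~\ref{ch}, extract the four paths from Lemma~\ref{toll1}, observe that $y\xrightarrow{P_3}x\xrightarrow{P_1}v$ kills the possibility that $N[u]-\{y\}$ separates $y$ from $v$ (your case (i) is exactly the paper's argument), and then aim for a contradiction with chordality when $N[v]-\{y\}$ separates $y$ from $u$. But case (ii) is where the entire content of the proposition lives, and there you have only a plan, not a proof: ``either extract the desired $y,u$-path \dots\ directly, or else assemble an induced cycle of length at least four'' is an unresolved disjunction, and you yourself defer ``the bulk of the technical work'' of checking chordlessness. That deferred work is the proof; as written, the contradiction is never derived.

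Moreover, the pieces you propose to assemble into the cycle --- $v$, the vertex $z$ on $P_3$, the tail of $P_3$ to $x$, the path $P_2$, and ``a neighbor of $v$ in $D$ supplied by $S$'' --- do not obviously close up. $P_2$ avoids $N[v]-\{x\}$ and ends at $u\notin N[v]$, so after leaving $z$ you have no controlled way to return to a neighbor of $v$; any path inside the component $D$ from $x$ (or from $P_2$) back to a vertex of $S$ is completely unconstrained and would wreck chordlessness. The path that is forced to meet $N[v]-\{y\}$, and hence can be used to close the cycle, is $P_4$ (the $y,u$-path avoiding $N[x]-\{y\}$): the paper picks the neighbor $v'$ of $v$ on it closest to $y$ and closes the cycle as $v'\xrightarrow{P_4}y\xrightarrow{P_3}x\xrightarrow{P_1}v\,v'$, using the fact that each segment is induced, that the $v',y$-segment has no internal neighbor of $v$, that $P_3$ and $P_1$ avoid $N[u]$ away from $y$ and $x$, and that $P_4$ avoids $N[x]$ away from $y$. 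Your construction omits $P_4$ from the cycle entirely (you use it only to locate $w\in S$ and conclude $x\notin S$), so even granting the deferred chord-checking, the route you sketch does not produce a closed walk from which an induced $C_{\geq 4}$ can be extracted. The minimal-clique-separator machinery (Dirac) is also unnecessary overhead: it buys you only $x\notin S$, which follows directly from $V(P_2)\cap N[v]\subseteq\{x\}$ and is not where the difficulty lies.
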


\begin{proof}
Suppose $T$ satisfies Axiom (JC). If $T$ does not satisfy Axiom (b2), then there exists $u,v,x,y$ such that $x\in T(u,v)$, $y\in T(u,x)$ and $y\notin T(u,v)$ must be distinct. Notice that $ux\notin E(G)$ because $y\in T(u,x)$. Since $x\in T(u,v)$, there exists an induced $x,v$-path, say $P$, without a neighbor of $u$ and an induced $x,u$-path, say $Q$, without a neighbor of $v$ (except possibly $x$). Similarly, since $y\in T(u,x)$, there exists an induced $u,y$-path, say $R$, without a neighbor of $x$ (except possibly $y$) and an induced $y,x$-path, say $S$, without a neighbor of $u$ (except possibly $y$). Since $y\notin T(u,v)$, a neighbor of $u$ separates $y$ from $v$ or  
a neighbor of $v$ separates $y$ from $u$ by Lemma \ref{toll1}. But $y\xrightarrow{S} x \xrightarrow{P} v$ is a $y,v$-path that does not contain a neighbor of $u$. Therefore, the only possibility is that a neighbor of $v$ separates $y$ from $u$. Therefore, $R$ contains a neighbor of $v$, say $v'$, which is closest to $y$. The vertices of $v'\xrightarrow{R}y\xrightarrow{S}x\xrightarrow{P}vv'$ contain an induced cycle of length at least four, a contradiction to the Axiom (JC) by Theorem \ref{ch}.\qed \bigskip
\end{proof}

The Axioms (J2), (TW1) and (TW2) are satisfied for a toll walk transit function on any graph $G$. By Theorems~\ref{ch} and \ref{chord}  and Propositions \ref{prop1}, \ref{jcj2} and \ref{prop2} we have the following characterization of the toll walk transit function of a chordal graph.

\begin{theorem}\label{chordal}
A transit function $R$ on a finite set $V$ satisfies the Axioms (b2), (J2), (JC), (TW1), (TW2), and (TWC) if and only if $G_R$ is a chordal graph and $R=T$ on $G_R$.
\end{theorem}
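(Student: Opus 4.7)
The plan is to observe that this final theorem is a synthesis: both directions follow by assembling the results already proved in this section, so no new technical work is required. I would split into the two implications and invoke the correct combination of prior propositions and theorems for each.

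For the forward direction, assume $R$ is a transit function on $V$ satisfying Axioms (b2), (J2), (JC), (TW1), (TW2), and (TWC). First, since $R$ satisfies (J2) and (JC), Proposition~\ref{jcj2} directly yields that $G_R$ is chordal. Second, with all six axioms in hand, Theorem~\ref{chord} gives $R=T$ on $G_R$. So both conclusions follow immediately.

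For the reverse direction, assume $G_R$ is a chordal graph and $R=T$ on $G_R$. The authors explicitly remark just before the theorem statement that (J2), (TW1), and (TW2) are satisfied by the toll walk transit function on every graph, so these three axioms hold for $T$ on $G_R$ without any assumption on $G_R$. Axiom (JC) follows from the chordality of $G_R$ by Theorem~\ref{ch}, and Axiom (TWC) follows from chordality by Proposition~\ref{prop1}. Finally, once we know that $T$ satisfies (JC) on $G_R$, Proposition~\ref{prop2} supplies Axiom (b2). Since $R=T$ on $G_R$, all six axioms hold for $R$.

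The only conceptual subtlety, and thus the ``main obstacle,'' is really just ensuring that the chain of implications is non-circular: Proposition~\ref{prop2} derives (b2) from (JC), which in turn is derived from the chordality of $G_R$ via Theorem~\ref{ch}, while in the forward direction chordality is itself derived from (JC)+(J2) via Proposition~\ref{jcj2}; so I would be explicit about which direction each citation is used in to avoid any appearance of circularity. With that care taken, the proof reduces to one sentence per implication listing the appropriate references.
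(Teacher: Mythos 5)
Your proposal is correct and matches the paper's own argument, which likewise assembles Theorem~\ref{chordal} by citing Theorem~\ref{chord} and Proposition~\ref{jcj2} for the forward direction, and Theorem~\ref{ch}, Propositions~\ref{prop1} and \ref{prop2}, plus the universal validity of (J2), (TW1), (TW2) for $T$, for the reverse direction. Your extra remark on non-circularity is a sensible clarification but not a departure from the paper's route.
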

Trees form a special subclass of chordal graphs. To fully describe the toll walk transit function of trees, we define Axiom (tr), which is a generalization of Axiom (J2), and combine it with Axiom (JC).\\
\medskip

 \noindent\textbf{Axiom (tr).} If there exist elements $u,v,x\in V$ such that $R(u,x)=\{u,x\}$, $R(x,v)=\{x,v\}$  then $x\in R(u,v)$.\medskip
\medskip
\begin{lemma}\label{tr}
The toll walk transit function $T$ satisfies Axiom (tr) on a graph $G$ if and only if $G$ is a triangle-free graph.
\end{lemma}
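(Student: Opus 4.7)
The plan is to prove both directions directly, using the characterization that $T(a,b)=\{a,b\}$ is equivalent to $ab \in E(G)$ (for distinct vertices) together with the obvious observation that a length-two walk $axb$ is a toll walk precisely when $a$ and $b$ are not adjacent.

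For the forward direction, I would assume that $G$ is triangle-free and verify Axiom (tr) directly. Suppose $T(u,x) = \{u,x\}$ and $T(x,v) = \{x,v\}$; the nontrivial case is when $u, x, v$ are pairwise distinct, in which case $ux, xv \in E(G)$. Because $G$ is triangle-free, $uv \notin E(G)$. Then the walk $W = uxv$ has $x$ as the unique neighbor of $u$ on $W$ (since $v$ is not adjacent to $u$) and symmetrically $x$ is the unique neighbor of $v$ on $W$. Hence $W$ is a toll $u,v$-walk containing $x$, so $x \in T(u,v)$, as required.

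For the converse, I would argue by contrapositive: if $G$ contains a triangle, then Axiom (tr) fails for $T$. Let $u,x,v$ span a triangle in $G$. Since $ux, xv \in E(G)$, we have $T(u,x) = \{u,x\}$ and $T(x,v) = \{x,v\}$. Because $uv \in E(G)$ as well, $T(u,v) = \{u,v\}$ (the only toll $u,v$-walk is the edge $uv$, since any longer walk would contain $v$ as a neighbor of $u$ different from its successor). Consequently $x \notin T(u,v)$ even though the hypotheses of Axiom (tr) hold, violating the axiom.

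Neither direction presents a real obstacle; the only subtlety is ensuring that $T(a,b) = \{a,b\}$ forces $ab \in E(G)$ (for $a \neq b$), which follows because any induced $a,b$-path of length $\geq 2$ is itself a toll walk and would therefore contribute extra vertices to $T(a,b)$. This is exactly why Axiom (tr) becomes a direct combinatorial translation of the triangle-free condition.
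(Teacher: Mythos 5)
Your proof is correct and follows essentially the same route as the paper's: the forward direction is the contrapositive of the paper's observation that $x\notin T(u,v)$ with $ux,xv\in E(G)$ forces $uv\in E(G)$ (since otherwise $uxv$ is a toll walk), and the converse is the paper's triangle counterexample. Your version just spells out the toll-walk verification slightly more explicitly.
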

\begin{proof}
   If $G$ contains a triangle with vertices $u, x, v$, then $T(u, x)=\{u, x\}$ and $T(x, v)=\{x, v\}$, but $x\notin T(u, v)$. Therefore, $T$ does not satisfy Axiom (tr).  Conversely, suppose that $T$ does not satisfy the Axiom (tr) on $G$. That is, if $T(u,x)=\{u,x\}$, $T(x,v)=\{x,v\}$, and $x\notin T(u,v)$, then the only possibility is $uv\in E(G)$, which implies that $u,x,v$ forms a triangle.
\end{proof}

By combining Theorem~\ref{ch} and Lemma~\ref{tr}, we obtain the following theorem on the toll walk function of trees.
\begin{theorem}\label{Tr}
The toll walk transit function $T$ satisfies the axioms (JC) and (tr) on a graph $G$ if and only if $G$ is a tree.
\end{theorem}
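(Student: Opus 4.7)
My plan is to derive both directions of the equivalence directly from Theorem~\ref{ch} and Lemma~\ref{tr}, combined with the elementary structural fact that a chordal triangle-free graph is acyclic.

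For the forward direction, I would suppose $T$ satisfies both Axioms (JC) and (tr) on $G$. Applying Theorem~\ref{ch} gives that $G$ is chordal, so $G$ contains no induced cycle of length at least four. Applying Lemma~\ref{tr} gives that $G$ is triangle-free, so $G$ contains no $C_3$. Since every cycle in a graph contains an induced subcycle of length at least three, and both possibilities (length exactly three and length at least four) are excluded, $G$ is acyclic. Under the paper's standing assumption that $G$ is connected (consistent with the framing of Problem~\ref{pbm1} and the preliminaries on transit functions), an acyclic connected graph is a tree.

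For the reverse direction, I would suppose $G$ is a tree. Then $G$ has no cycles at all, hence no induced cycle of length at least four (so $G$ is chordal) and no triangle (so $G$ is triangle-free). The reverse implications of Theorem~\ref{ch} and Lemma~\ref{tr} then yield that $T$ satisfies Axiom (JC) and Axiom (tr), respectively.

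Honestly, the only point that requires any care is the implicit connectedness hypothesis on $G$: without it, the equivalence would characterize forests rather than trees, since the two named results would still combine to force acyclicity but not connectivity. Apart from noting this, the proof is an immediate composition of the two cited results and contains no genuine obstacle.
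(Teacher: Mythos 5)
Your proof is correct and follows essentially the same route as the paper, which simply states the result as the combination of Theorem~\ref{ch} and Lemma~\ref{tr}; you merely spell out the elementary step that chordal plus triangle-free means acyclic. Your remark about connectedness is well taken --- the paper leaves this implicit as well, and without it the statement would indeed characterize forests rather than trees.
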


Now, the characterization of the toll walk transit function of the tree can be obtained by replacing Axiom (J2) with Axiom (tr) in Theorem~\ref{chordal}

\begin{theorem}\label{Tree}
A transit function $R$ on a finite set $V$ satisfies Axioms (b2), (tr), (JC), (TW1), (TW2) and (TWC) if and only if $G_R$ is a tree and $R=T$ on $G_R$.
\end{theorem}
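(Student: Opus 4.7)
The plan is to obtain Theorem~\ref{Tree} as a short refinement of Theorem~\ref{chordal}, leveraging the latter as the workhorse. The key preliminary observation is that Axiom (tr) is strictly stronger than Axiom (J2): both have the same hypotheses $R(u,x)=\{u,x\}$ and $R(x,v)=\{x,v\}$ and the same conclusion $x\in R(u,v)$, but (tr) drops the extra assumptions $u\neq v$ and $R(u,v)\neq\{u,v\}$. Consequently, any $R$ satisfying (tr) automatically satisfies (J2), so the full hypothesis list of Theorem~\ref{chordal} is available as soon as the hypotheses of Theorem~\ref{Tree} hold.

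For the forward direction, I would assume $G_R$ is a tree and $R=T$ on $G_R$. Since trees are chordal, Proposition~\ref{prop1} supplies (TWC) and Theorem~\ref{ch} supplies (JC); Proposition~\ref{prop2} then upgrades this to (b2). Because trees are also triangle-free, Lemma~\ref{tr} yields (tr). The remaining axioms (TW1), (J2) and (TW2) hold for the toll walk transit function on every graph, as recorded in the paragraph immediately preceding Theorem~\ref{chordal}. Assembling these facts gives the required list.

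For the converse, I would assume $R$ satisfies (b2), (tr), (JC), (TW1), (TW2) and (TWC). Using the observation above, $R$ also satisfies (J2), so Theorem~\ref{chordal} applies and yields that $G_R$ is chordal and $R=T$ on $G_R$. To upgrade chordal to tree, it suffices to prove $G_R$ is triangle-free, since a chordal graph with no triangle has no cycles at all. If some $u,x,v$ induced a triangle in $G_R$, then by definition of $G_R$ we would have $R(u,x)=\{u,x\}$ and $R(x,v)=\{x,v\}$; Axiom (tr) would then force $x\in R(u,v)$, but $uv\in E(G_R)$ gives $R(u,v)=\{u,v\}$, a contradiction because $x\notin\{u,v\}$.

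The main (and really only) point of care is confirming the implication (tr) $\Rightarrow$ (J2), because it is precisely this gain that allows Theorem~\ref{chordal} to be reinvoked verbatim; the rest of the argument is essentially bookkeeping across previously established results. A secondary item worth flagging is terminological: the triangle-free chordal conclusion gives an acyclic graph, which matches the paper's use of ``tree'' in the companion Theorem~\ref{Tr}, and no further structural work is required beyond what Theorem~\ref{chordal} already provides.
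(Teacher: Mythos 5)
Your proposal is correct and follows essentially the same route as the paper, which obtains Theorem~\ref{Tree} by substituting Axiom (tr) for Axiom (J2) in Theorem~\ref{chordal}; your observation that (tr) implies (J2) and your use of Lemma~\ref{tr} to force triangle-freeness are exactly the details the paper leaves implicit. The only residual looseness (acyclic versus connected, i.e.\ forest versus tree) is present in the paper's own treatment and is resolved by noting that $R=T$ being a transit function forces $G_R$ to be connected.
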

 

\section{Toll walk transit function of AT-free graphs}

In this Section, we obtain a characterization of the toll function of AT-free graphs. For this we relax Axiom (b2) to (b2') and modify Axiom (J3) to (J4) and (J4'), Axioms (TW1) and (TW2) are generalized by Axiom (TW1') and finally Axiom (TW3) is modified to Axiom (TWA).\medskip

\noindent\textbf{Axiom  (b2').} If there exist elements $u,v,x\in V$ such that $x\in R(u,v)$ and $R(u,x)\neq \{u,x\}$, then $R(x,v)\subseteq R(u,v)$. \medskip

\noindent\textbf{Axiom (J4).} If there exist elements $u,v,x,y\in V$ such that $x\in R(u,y), y\in R(x,v), x\neq y$,  $R(u,x)=\{u,x\}, R(y,v)=\{y,v\}$ and $R(u,v)\neq \{u,v\}$, then $x\in R(u,v)$.\medskip

\noindent\textbf{Axiom (J4').} If there exist elements $u,v,x,y\in V$ such that $x\in R(u,y)$, $y\in R(x,v)$, $R(u,x)\neq\{u,x\}$, $R(y,v)\neq\{y,v\}$, $R(x,y)\neq \{x,y\}$ and $R(u,v)\neq \{u,v\}$, then $x\in R(u,v)$.\medskip

\noindent\textbf{Axiom (TWA).} If there exist different elements $u,v,x$ such that $x\in R(u,v)$, then there exist $x_1 \in R(x,v) \cap R(u,v)$ where $x_1 \neq x$, $R(x,x_1) = \{x,x_1\}$, $R(u,x_1) \neq \{u,x_1\}$ and $R(x_1,v) \subset R(x,v)$.  \medskip

\noindent\textbf{Axiom (TW1').} If there exist elements $u,v,x, w,y,z$ such that $x,y\in R(u,v)$, $x\neq u$, $y\neq v$, $R(x,v)\neq \{x,v\}$, $R(u,y)\neq \{u,y\}$, $R(x,z)=\{x,z\}$, $R(z,w)=\{z,w\}$, $R(w,y)=\{w,y\}$  and $R(u,w)\neq \{u,w\}$, then $z\in R(u,v)$.\medskip

If we have $y=x$ in Axiom (TW1), then $x$ is not adjacent to $u$ nor to $v$. Furthermore, Axiom (TW1) is a special version of Axiom (TW1') if we set $w=z$. When both $w=z$ and $x=y$, we obtain Axiom (TW2) from Axiom (TW1').

\begin{proposition}\label{ba}
The toll walk transit function satisfies Axiom (J4) on any graph $G$. 
\end{proposition}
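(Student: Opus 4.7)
The plan is to reduce the statement to a single application of Lemma \ref{toll1}. First I would record the basic consequences of the hypotheses. Because $T(u,x)=\{u,x\}$ and $T(y,v)=\{y,v\}$, the pairs $ux$ and $yv$ are edges of $G$, and $u\neq x$, $y\neq v$. Because $T(u,v)\neq\{u,v\}$, the pair $uv$ is not an edge (otherwise $uv$ itself is the only toll walk between $u$ and $v$, forcing $T(u,v)=\{u,v\}$), so $u\neq v$. Since $x\in T(u,y)$ with $x\neq u$ and $x\neq y$, the same reasoning forces $u$ and $y$ to be non-adjacent; symmetrically, $x$ and $v$ are non-adjacent. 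A short case check also gives $x\neq v$ and $u\neq y$, so $u,v,x,y$ are pairwise distinct.

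Next I would invoke Lemma~\ref{toll1} on the hypothesis $x\in T(u,y)$: since $u$ and $y$ are non-adjacent, this lemma yields an $x,y$-path $P$ that avoids $N[u]-\{x\}$. The key construction is to extend $P$ through the edge $yv$ to produce an $x,v$-walk. This walk still avoids $N[u]-\{x\}$ because $y\notin N[u]$ and $v\notin N[u]$ (both being non-adjacent to $u$) and neither equals $x$. Extracting a path from the walk then shows that $N[u]-\{x\}$ does not separate $x$ from $v$.

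For the remaining condition of Lemma~\ref{toll1}, namely that $N[v]-\{x\}$ does not separate $x$ from $u$, the edge $ux$ itself serves as a $u,x$-path; its only vertices are $u$ and $x$, and $u\notin N[v]$ (since $uv$ is not an edge) while $x\notin N[v]-\{x\}$ trivially. With both separation conditions verified and $u,v$ distinct and non-adjacent, Lemma~\ref{toll1} applied in the reverse direction delivers $x\in T(u,v)$.

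The proof has essentially no obstacle beyond bookkeeping: the only point that needs care is checking that concatenating $P$ with the edge $yv$ stays outside $N[u]-\{x\}$, which rests on the non-adjacencies of $u$ with $y$ and with $v$. Nothing in the argument uses $x\in T(u,y)$ or $y\in T(x,v)$ beyond extracting the one $x,y$-path $P$, so neither the extra hypothesis $R(y,v)=\{y,v\}$ (used only to produce the edge $yv$) nor $R(u,x)=\{u,x\}$ (used only to produce the edge $ux$) plays any deeper role, confirming that Axiom~(J4) is a formal consequence of Lemma~\ref{toll1} and therefore holds for the toll walk transit function on every graph.
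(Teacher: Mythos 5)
Your proof is correct and follows essentially the same route as the paper: both arguments extract the $x,y$-path $P$ avoiding $N[u]-\{x\}$ from the hypothesis $x\in T(u,y)$ and extend it through the edge $yv$ to reach $v$. The only difference is cosmetic: the paper finishes by exhibiting the explicit toll walk $ux\xrightarrow{P}av$ (with $a$ the neighbour of $v$ on $P$ closest to $x$), whereas you verify the two separation conditions and invoke Lemma~\ref{toll1} in its sufficiency direction, which amounts to the same thing.
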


\begin{proof}
Assume that $x\in T(u,y)$, $y\in T(x,v)$, $x\neq y$,   $T(u,x)=\{u,x\}$, $T(y,v)=\{y,v\}$ and $T(u,v)\neq \{u,v\}$. Since $x\in T(u,y)$, there exists an $x,y$-path $P$ that avoids all neighbors of $u$ beside $x$. Let $a$ be the neighbor of $v$ on $P$ that is closest to $x$ on $P$. (Notice that at least $y$ is a neighbor of $v$ on $P$.) The path $ux\xrightarrow{P}av$ is a $u,v$-toll walk containing $x$ and $x\in T(u,v)$.
\end{proof}

\begin{proposition}\label{j4'}
If $G$ is an AT-free graph and $T$ is the toll walk transit function on $G$, then $T$ satisfies the axioms (J4') and (b2') on $G$.
\end{proposition}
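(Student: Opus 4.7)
The plan is to prove each of the two axioms by contradiction, translating a hypothetical failure into the existence of an asteroidal triple in $G$ and contradicting the AT-free hypothesis.

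For Axiom (b2'), suppose $x \in T(u,v)$ with $R(u,x)\neq\{u,x\}$ and $z \in T(x,v)\setminus T(u,v)$. After disposing of the trivial cases $z \in \{u,v,x\}$, the hypotheses together with the definition of $T$ force $ux$, $uv$, and $xv$ all to be non-edges of $G$. Lemma~\ref{toll1} then supplies four paths with prescribed neighborhood-avoidance properties: an $x,v$-path $P_1$ avoiding $N[u]$, an $x,u$-path $P_2$ avoiding $N[v]$, a $z,v$-path $P_3$ avoiding $N[x]\setminus\{z\}$, and a $z,x$-path $P_4$ avoiding $N[v]\setminus\{z\}$. The concatenation of $P_4$ and $P_2$ is a $z,u$-walk avoiding $N[v]\setminus\{z\}$ (using $x\notin N[v]$), so the only way Lemma~\ref{toll1} can fail for $z\in T(u,v)$ is that $N[u]\setminus\{z\}$ separates $z$ from $v$. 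A case split on whether $zu$ is an edge then isolates a pairwise non-adjacent triple: $\{u,v,z\}$ when $zu\notin E(G)$ (where the separation also forces $zv \notin E(G)$), and $\{x,v,z\}$ when $zu\in E(G)$. For this triple the three avoiding paths required by the definition of an asteroidal triple are assembled from $P_1,\dots,P_4$, possibly trimmed at their first entry into the relevant closed neighborhood, producing the desired contradiction.

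For Axiom (J4'), the hypotheses $R(u,x)\neq\{u,x\}$, $R(x,y)\neq\{x,y\}$, and $R(y,v)\neq\{y,v\}$, combined with $x\in R(u,y)$ and $y\in R(x,v)$, force $u,x,y,v$ to be pairwise non-adjacent: for instance, $uy\in E(G)$ would collapse $T(u,y)$ to $\{u,y\}$ and exclude $x$, and symmetrically for $xv$. Assume $x\notin T(u,v)$. By Lemma~\ref{toll1} and the natural symmetry of the hypothesis, one may take $N[u]$ to separate $x$ from $v$ in $G$. Since the $x,y$-path witnessing $x\in T(u,y)$ avoids $N[u]$, the same closed neighborhood also separates $y$ from $v$. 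Combining this with the remaining three paths coming from $x\in T(u,y)$ and $y\in T(x,v)$, I show that the triple $\{x,y,v\}$ forms an asteroidal triple: two of the three required avoiding paths ($y,v$ avoiding $N[x]$ and $y,x$ avoiding $N[v]$) are already given, and the third---an $x,v$-path avoiding $N[y]$---is obtained by prepending the $x,u$-path avoiding $N[y]$ to a suitable $u,v$-detour, whose existence is forced because any failure would exhibit yet another separation giving rise to one of the forbidden induced subgraphs listed in Theorem~\ref{AT-free}.

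The main obstacle in both parts is the construction of the third avoiding path of the asteroidal triple. The four available paths each avoid only a single closed neighborhood, whereas each AT-witnessing path must avoid the neighborhood of a different vertex; concatenating two given paths generically avoids only the union of their forbidden sets, so the proof must repeatedly re-route at the first entry into the unwanted neighborhood. The non-adjacencies built into the hypotheses of (b2') and (J4') are what guarantee that these entry points lie outside the closed neighborhoods of the other two triple vertices, making the re-routing valid.
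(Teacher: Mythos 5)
Your overall strategy --- turning a hypothetical failure of the axiom into an asteroidal triple --- is also the paper's strategy, but in both parts you pick the wrong triple, and in each case one of the three required avoiding paths either provably cannot exist or is not justified. For (b2'), in your case $zu\notin E(G)$ you propose the triple $\{u,v,z\}$. An asteroidal triple on $\{u,v,z\}$ needs a $z,v$-path containing no neighbor of $u$; but you have just established that $N[u]\setminus\{z\}=N[u]$ separates $z$ from $v$, so \emph{every} $z,v$-path meets $N[u]$ and hence contains a neighbor of $u$. The triple that works is $\{u,x,v\}$: $P_2$ and $P_1$ supply the $u,x$- and $x,v$-legs, and the separation forces $P_3$ (your $z,v$-path avoiding $N[x]\setminus\{z\}$) to meet $N[u]\setminus\{z\}$, so truncating $P_3$ there and prepending $u$ gives a $u,v$-path with no neighbor of $x$ (here $u\notin N(x)$ because $ux\notin E(G)$). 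Your case $zu\in E(G)$ with triple $\{x,v,z\}$ has the same defect: nothing provides an $x,v$-path avoiding $N(z)$, and $xz$ need not even be a non-edge. The triple $\{u,x,v\}$ handles both cases uniformly, which is exactly what the paper does.

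For (J4') the gap is of the same kind. You aim at the triple $\{x,y,v\}$ and need an $x,v$-path avoiding $N(y)$, which you propose to assemble from the $x,u$-path avoiding $N[y]$ and ``a suitable $u,v$-detour''. No hypothesis guarantees a $u,v$-path avoiding $N(y)$ --- it is entirely consistent with all the premises that $N[y]$ separates both $u$ and $x$ from $v$ --- and deferring to ``one of the forbidden induced subgraphs listed in Theorem~\ref{AT-free}'' is not an argument: that theorem lists an infinite family and you give no case analysis. The triple that works is $\{u,x,y\}$. Your observation that $N[u]$ separates $y$ from $v$ is correct and is precisely the right step; it forces the $y,v$-path avoiding $N(x)$ to contain a neighbor $u'$ of $u$, and taking $u'$ closest to $y$ yields a $u,y$-path with no neighbor of $x$. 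Together with the two paths witnessing $x\in T(u,y)$ (a $u,x$-path avoiding $N(y)$ and an $x,y$-path avoiding $N(u)$), this exhibits $\{u,x,y\}$ as an asteroidal triple. So while your reductions to ``$N[u]$ separates $\dots$ from $v$'' are sound, the construction of the asteroidal triple --- the heart of the proof --- does not go through as written in either part.
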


\begin{proof}
First, we show that Axiom (J4') holds. Suppose that $T$ does not satisfy Axiom (J4') on $G$. That is $x\in T(u,y)$, $y\in T(x,v)$, $T(u,x)\neq\{u,x\}$, $T(y,v)\neq\{y,v\}$, $T(x,y)\neq \{x,y\}$, $T(u,v)\neq \{u,v\}$ but $x\notin  T(u,v)$. Since $x\in T(u,y)$ there is a $u,x$-path $P$ without a neighbor of $y$ and an $x,y$-path $Q$ without a neighbor of $u$. Again, since $y\in T(x,v)$, there is a $y,v$-path $R$ without a neighbor of $x$. Since $x\notin  T(u,v)$ we can assume that $N[u]$ separates $x$ from $v$ (the other possibility from Lemma \ref{toll1} is symmetric). That is, $R$ contains a neighbor $u'$ of $u$ and we choose $u'$ to be the neighbor of $u$ on $R$ that is closest to $y$. Then $uu'\xrightarrow{R}y$ is a $u,y$-path without a neighbor of $x$, $P$ is a $u,x$-path without a neighbor of $y$ and $Q$ is an $x,y$-path without a neighbor of $u$. That is, the vertices $u,x,y$ form an asteroidal triple, a contradiction.

Suppose now that $T$ does not satisfy Axiom (b2'). That is, $x\in T(u,v)$, $T(u,x)\neq \{u,x\}$, and $T(x,v)\not\subseteq T(u,v)$. So, there exist $y\in V(G)$ such that $y\in T(x,v)$ and $y\notin T(u,v)$, which means that $y\neq v$, $y\neq x$, and $T(x,v)\neq\{x,v\}$. Since $x\in T(u,v)$, let $P$ be an induced $u,x$-path without a neighbor of $v$ (except possibly $x$) and let $Q$ be an induced $x,v$-path without a neighbor of $u$. Since $y\in T(x,v)$, let $R$ be an induced $x,y$-path without a neighbor of $v$ (except possibly $y$) and $S$ be an induced $y,v$-path without a neighbor of $x$ (except possibly $y$). Furthermore, since $y\notin T(u,v)$, $S$ contains a neighbor $u'$ of $u$. The vertices $u,x,v$ form an asteroidal triple because $P$ is a $u,x$ path without a neighbor of $v$, $Q$ is a $x,v$ path without a neighbor of $u$ and $uu'\xrightarrow{S}v$ is a $u,v$ path without a neighbor of $x$, a contradiction.\qed
\end{proof}

\begin{proposition}\label{Twa}
If $G$ is an AT-free graph, then the toll walk transit function $T$ satisfies Axiom (TWA) on $G$.
\end{proposition}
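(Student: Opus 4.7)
My plan is to construct $x_1$ as a neighbor of $x$ chosen with a small case split. Since $x\in T(u,v)$ with $x\notin\{u,v\}$ forces $u$ and $v$ to be non-adjacent, and if $x$ is adjacent to $v$ then $T(x,v)=\{x,v\}$ and $x_1=v$ trivially satisfies Axiom~(TWA), I may assume $x$ and $v$ are non-adjacent. Lemma~\ref{toll1} then supplies an induced $x,v$-path $Q$ avoiding $N(u)\setminus\{x\}$ and an induced $u,x$-path $P$ avoiding $N(v)\setminus\{x\}$. If $x$ and $v$ share a common neighbor outside $N[u]$, I let $x_1$ be any such common neighbor (Case~1); otherwise I let $x_1$ be the vertex of $Q$ immediately after $x$ (Case~2). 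In either case $x_1\neq x$, $x_1$ is adjacent to $x$, and $x_1\notin N[u]$.

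Four of the conditions in Axiom~(TWA) follow uniformly. The membership $x_1\in T(x,v)$ is witnessed by the induced $x,v$-path through $x_1$ (namely $x,x_1,v$ in Case~1 and $Q$ in Case~2); $T(x,x_1)=\{x,x_1\}$ holds by adjacency; $T(u,x_1)\neq\{u,x_1\}$ because $u$ and $x_1$ are non-adjacent, so any induced $u,x_1$-path has length at least two; and $x_1\in T(u,v)$ follows from Lemma~\ref{toll1} applied to $(u,v)$, using $P$ extended by the edge $xx_1$ as a $u,x_1$-path avoiding $N[v]\setminus\{x_1\}$ (this invokes the non-adjacency of $x$ and $v$) together with the $x_1,v$-subpath of $Q$ (or the edge $x_1v$ in Case~1) as an $x_1,v$-path avoiding $N[u]$.

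The decisive step is $T(x_1,v)\subsetneq T(x,v)$. In Case~1, $T(x_1,v)=\{x_1,v\}$ and $x\in T(x,v)\setminus T(x_1,v)$ yields strict inclusion at once. In Case~2, take $w\in T(x_1,v)\setminus\{x,x_1,v\}$, which forces $x_1$ and $v$ to be non-adjacent. Lemma~\ref{toll1} supplies a $w,v$-path $P_1$ avoiding $N[x_1]\setminus\{w\}$ and a $w,x_1$-path $P_2$ avoiding $N[v]\setminus\{w\}$; extending $P_2$ by $x_1x$ yields a $w,x$-path avoiding $N[v]\setminus\{w\}$. Assuming for contradiction that no $w,v$-path avoids $N[x]\setminus\{w\}$, the path $P_1$ (which already misses $x$ and every common neighbor of $x$ and $x_1$) must cross $N[x]\setminus\{w\}$ at some $y\in N(x)\setminus N[x_1]\setminus\{w\}$ chosen as the first such crossing from $w$; then three paths assembled from $P$ prolonged by $xy$, the $y,v$-tail of $P_1$, and the concatenation of $P$, $xx_1$ with the $x_1,v$-subpath of $Q$ exhibit $\{u,y,v\}$ as an asteroidal triple, contradicting AT-freeness, and Lemma~\ref{toll1} places $w$ in $T(x,v)$. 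Strictness in Case~2 follows because the Case~2 hypothesis, combined with a hypothetical toll walk $x_1,x,z_1,\dots,z_{l-1},v$ witnessing $x\in T(x_1,v)$, produces together with $Q$ either a common neighbor of $x$ and $v$ outside $N[u]$ (contradicting Case~2) or a long induced cycle violating the $C_k$-free characterization of AT-free graphs from Theorem~\ref{AT-free}. The main obstacle is the asteroidal-triple contradiction in the inclusion step together with the cycle analysis in the strictness step, both of which demand the first-crossing choice of $y$ and possible refinements of $P$ and $Q$ via Lemma~\ref{toll1}.
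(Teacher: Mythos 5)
Your Case 1 and the four ``uniform'' conditions are essentially fine, but the heart of Axiom (TWA) is the strict inclusion $T(x_1,v)\subsetneq T(x,v)$, and your Case 2 argument for the inclusion $T(x_1,v)\subseteq T(x,v)$ breaks down at the asteroidal-triple step. You propose to witness $\{u,y,v\}$ as an asteroidal triple using, as the $u,v$-path, the concatenation of $P$, the edge $xx_1$ and the $x_1,v$-subpath of $Q$. That path passes through $x$, and $x$ is by construction a neighbor of $y$ (you chose $y\in N(x)\setminus N[x_1]$), so this path does \emph{not} avoid the neighborhood of $y$ and cannot serve as a witness. The other two witnesses are also unjustified: the $y,v$-tail of $P_1$ is only known to avoid $N[x_1]-\{w\}$, not $N(u)$, and $y$ itself may be adjacent to $v$, since your Case 2 hypothesis only excludes common neighbors of $x$ and $v$ \emph{outside} $N[u]$ and $y$ need not lie outside $N[u]$. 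So no contradiction is obtained. In fact, the configuration you are trying to refute --- a vertex $w\in T(x_1,v)\setminus T(x,v)$ forcing a neighbor of $x$ outside $N[x_1]$ onto every $w,v$-path --- is exactly the situation the paper's proof has to live with: rather than deriving a contradiction, it replaces $x_1$ by a new neighbor $x_2$ of $x$ found on the $y_1,v$-path avoiding $N[x_1]$, shows $T(x_2,v)\subset T(x_1,v)$ using Axioms (b1') and (b2'), and iterates to a finite maximal sequence $x_1,\dots,x_r$ whose last term satisfies (TWA). Your single-step claim that the first candidate already works under the common-neighbor dichotomy is not established and, given the structure of the paper's argument, is very likely false.

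The strictness claim in Case 2 is also under-argued: ``a long induced cycle violating the $C_k$-free characterization'' does not dispose of the short cycles $C_4$ and $C_5$, which are AT-free, and the dichotomy ``common neighbor outside $N[u]$ or long induced cycle'' is asserted without proof. The paper instead proves $x\notin T(x_r,v)$ by showing that a putative induced $x,v$-path avoiding $N[x_r]$ would produce a neighbor $v_1$ of $x$ extending the maximal sequence, contradicting maximality --- a genuinely different and necessary argument. To repair your proof you would essentially have to reproduce the paper's iteration; the case split on common neighbors does not let you avoid it.
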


\begin{proof}
 Suppose that $x\in T(u,v)$ where $u,v,x$ are distinct vertices of an AT-free graph $G$. Let $P$ be an induced $u,x$-path without a neighbor of $v$ (except possibly $x$) and $Q$ be an induced $x,v$-path without a neighbor of $u$ (except possibly $x$). If $T(x,v)=\{x,v\}$, then we are done for $x_1=v$. Otherwise, consider the neighbor $x_1$ of $x$ on $Q$. It follows that $x_1 \in T(x,v),  x_1 \neq x$ with $T(x,x_1) = \{x,x_1\}$ and $T(u,x_1) \neq \{u,x_1\}$. In addition, $u\xrightarrow{P}xx_1\xrightarrow{Q}v$ is a $u,v$-toll walk that contains $x_1$. So, $x_1\in T(u,v)$ and with this $x_1 \in T(u,v) \cap T(x,v)$. We still have to prove that $T(x_1,v) \subset T(x,v)$. If $T(x_1,v)=\{x_1,v\}$, then $x_1$ is the desired vertex and we may assume in what follows that $x_1$ and $v$ are not adjacent. 

 First, we show that $T(x_r,v)\subseteq T(x,v)$ holds for some $x_r$ with $x_r\in T(x,v)\cap T(u,v)$, $T(x_r,x)=\{x_r,x\}$, $x_r\neq x$ and $T(x_r,u)\neq\{x_r,u\}$. If $T(x_1,v)\subseteq T(x,v)$, then we are done. Otherwise, assume that $T(x_1,v) \not\subseteq T(x,v)$ where $y_1\in T(x_1,v) $ and $y_1\notin T(x,v)$. Clearly, $y_1$ is not on $Q$. Let $R_1$ be an induced $x_1,y_1$-path without a neighbor of $v$ (except possibly $y_1$) and $S_1$ be an induced $y_1,v$-path without a neighbor of $x_1$ (except possibly $y_1$). Since $y_1\notin T(x,v)$ $S_1$ contains a neighbor of $x$, say $x_2$, which is closest to $v$ in $S_1$. We claim that $y_1 $ is not adjacent to an internal vertex of the $x_1,v$-subpath of $Q$. If not, then let $y_1'$ be a neighbor of $y_1$ on the $x_1,v$-subpath of $Q$. The walk $xx_1\xrightarrow{R_1}y_1y'_1\xrightarrow{Q}v$ is or contains (when $x$ is adjacent to a vertex of $R_1$ different from $x_1$) a toll $x,v$-walk, a contradiction to $y_1\notin T(x,v)$. Next, we claim that $T(x_1,y_1)= \{x_1,y_1\}$. If not, then $x_1,y_1,v$ form an asteriodal triple since $T(x_1,y_1)\neq \{x_1,y_1\}$ and $R_1$ is an $x_1,y_1$-path without a neighbor of $v$, $x_1\xrightarrow{Q}v$ is an $x_1,v$-path without a neighbor of $y_1$ and $S_1$ is a $y_1,v$-path without a neighbor of $x_1$.  Therefore, $T(x_1,y_1)= \{x_1,y_1\}$ and since $T(u,x_1)\neq \{u,x_1\}$, we have $y_1\neq u$. The next claim is that $u$ is not adjacent to a vertex, say $u_1$, in the $x_2,v$-subpath of $S_1$. If not, then $uu_1\xrightarrow{S_1}v$ is a $u,v$-path without a neighbor of $x_1$, $u\xrightarrow{P}xx_1$ is a $u,x_1$-path without a neighbor of $v$ and $x_1 \xrightarrow{Q}v$ an $x_1,v$-path without a neighbor of $u$. This means that $u$, $v$, and $x_1$ form an asteroidal triple, a contradiction. Therefore, $u$ is not adjacent to a vertex on the $x_2,v$-subpath of $S_1$. In particular, $T(u,x_2)\neq \{u,x_2\}$, $u\xrightarrow{P}xx_2\xrightarrow{S_1}v$ is a toll $u,v$ -walk, and $x_2\in T(u,v)$. If $T(x_2,v)=\{x_2,v\}$, then $x_2$ fulfills Axiom (TWA) and we are done. So, we may assume in what follows that $v$ and $x_2$ are not adjacent. We next claim that $x_2$ is adjacent to some internal vertex, say $x_2'$, of the $x_1, v$-subpath of $Q$.
 If not, then $x_1, x_2,v$ form an asteroidal triple (since $x_2\xrightarrow{S_1}v $ is an $x_2,v$-path without a neighbor of $x_1$, $x_1\xrightarrow{Q}v $ is an $x_1,v$-path without a neighbor of $x_2$ and $x_1xx_2$ is an $x_1,x_2$-path without a neighbor of $v$). Now, $xx_2x_2'\xrightarrow{Q}v$ is a $x,v$-toll walk containing $x_2$. That is $x_2 \in  T(x,v)$ and hence $x_2 \in T(u,v) \cap T(x,v)$. So, if $T(x_2,v) \subseteq T(x,v)$, then $x_2$ is our desired $x_1$. Moreover,  $x_1xx_2\xrightarrow{S_1}v$ is a toll $x_1,v$-walk containing $x_2$. That is $x_2\in T(x_1,v)$ and together with $T(x_1, x_2)\neq \{x_1, x_2\}$, the Axioms (b2') and (b1') yields that $T(x_2,v)\subset T(x_1,v)$. (Recall that Axiom (b1') holds by Corollary \ref{ATDCH2-free} and Axiom (b2') by Proposition \ref{j4'}.)

If not, then there exists $y_2$ (which can be equal to $y_1$) such that $y_2\in T(x_2,v) $ and $y_2\notin T(x,v)$. Since $y_2\in T(x_2,v) $, similar to the above case, let $R_2$ be an induced $x_2,y_2$-path without a neighbor of $v$ (except possibly $y_2$) and $S_2$ be an induced $y_2,v$-path without a neighbor of $x_2$ (except possibly $y_2$). On the other hand,  $y_2\notin T(x,v)$ implies that $S_2$  contains a neighbor of $x$, say $x_3$ (note that $x_3 \neq x_2$ and $T(x_2, x_3)\neq \{x_2, x_3\}$). As in the above case, $T(x_2, y_2)=\{x_2,y_2\}$, otherwise $x_2,y_2,v$ forms an asteriodal triple. In addition, $u$ is not adjacent to a vertex in the $x_3,v$- subpath of $S_2$, otherwise $u,x_2,v$ forms an asteroidal triple. In particular, $T(u,x_3)\neq \{u,x_3\}$, $u\xrightarrow{P}xx_3\xrightarrow{S_2}v$ is a toll $u,v$-walk, and $x_3\in T(u,v)$. If $T(x_3,v)=\{x_3,v\}$, then $x_3$ fulfills Axiom (TWA) and we are done. Therefore, we may assume in what follows that $v$ and $x_3$ are not adjacent. Now we claim that $x_3$ is adjacent to some internal vertices of both $x_2,v$-subpath of $S_1$ and $x_1,v$-subpath of $Q$ otherwise $x_3,x_2,v$ or $x_3,x_1,v$, respectively, form an asteroidal triple. For a neighbor $x_3'$ of $x_3$ in $Q$ is $xx_3x_3'\xrightarrow{Q}v$ a toll $x,v$-walk that contains $x_3$. Hence, $x_3 \in T(u,v) \cap T(x,v)$. So, if $T(x_3,v) \subseteq T(x,v)$, then $x_3$ is our desired $x_1$. If not, then there is $y_3$ (may be $y_1$ or $y_2$) such that $y_3\in T(x_3,v) $ and $y_3\notin T(x,v)$. Since $x_3\in T(x_2,v)$ and $T(x_2, x_3)\neq \{x_2, x_3\}$ we have $T(x_3,v)\subset T(x_2,v)\subset T(x_1,v)$ by Axioms (b2') and (b1'). 

Continuing with this procedure, we get a sequence $x_1,\dots,x_r\in V$ such that $x_r\in T(u,v) \cap T(x,v)$, $T(x,x_r)= \{x,x_r\}$,  $T(u,x_r)\neq \{u,x_r\}$ and $T(x_r,v) \subseteq T(x,v)$ together with $T(x_n,v)\subset \dots \subset T(x_2,v)\subset T(x_1,v)$. This sequence is finite, since $V$ is finite and we may assume that the mentioned sequence is maximal. This means that there does not exist a vertex $w$ in $T(x_n,v)$ such that $w\in T(u,v) \cap T(x,v)$, $T(x,w)\neq \{x,w\}$,  $T(u,w)= \{u,w\}$ and $T(w,v) \subseteq T(x,v)$.

Now we have to prove that $x\notin T(x_r,v)$.  If possible suppose that $x\in T(x_r,v)$, then there exists an induced $x,v$-path, say $P_x$, without a neighbor of $x_r$ (except possibly $x$). Let $v_1$ be the neighbor of $x$ on $P_x$. Now, $x_rxv_1\xrightarrow{P_x}v$  is a toll $x_r,v$-walk containing $v_1$ so that $v_1\in T(x_r,v)$. Also, $T(v_1,x_r)\neq \{v_1, x_r\} $ implies that $T(v_1,v)\subset T(x_r,v)$ by the axioms (b2') and (b1'). Moreover, $T(u,v_1)\neq \{u,v_1\}$, otherwise $u,x_r,v$ form an asteroidal triple.  So, we have $v_1\in T(u,v) \cap T(x,v)$, $T(x,v_1)= \{x,v_1\}$,  $T(u,v_1)\neq \{u,v_1\}$ and $T(v_1,v) \subseteq T(x,v)$, a contradiction to the maximal length of sequence $x_1\dots,x_r$. So $x\notin T(x_r,v)$ and $T(x,v)\subset T(x_r,v)$ and Axiom (TWA) hold for $x_r$.  \qed  
\end{proof}

We continue with a lemma that is similar to Lemma \ref{easy1} only that we use different assumptions now.

\begin{lemma}\label{easy}
Let $R$ be a transit function on a non-empty finite set $V$ satisfying Axioms (J2),  (J4), (J4') and (TW1'). If $P_n$, $n\geq 2$, is an induced $u,v$-path in $G_R$, then $V(P_n)\subseteq R(u,v)$. Moreover, if $z$ is adjacent to an inner vertex of $P_n$ that is not adjacent to $u$ or to $v$ in $G_R$, then $z\in R(u,v)$. 
\end{lemma}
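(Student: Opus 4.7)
The plan is to follow the template of Lemma~\ref{easy1}, proving both parts by induction on $n$ with $P_n = u x_1 \cdots x_{n-2} v$, but replacing the single use of Axiom (JC) by a case analysis invoking (J4), (J4'), and (TW1'). The bases $n \in \{2,3\}$ are unchanged; for $n = 4$, Axiom (J2) delivers $x_1 \in R(u, x_2)$ and $x_2 \in R(x_1, v)$, after which Axiom (J4), applied twice (once with $(x, y) = (x_1, x_2)$ and once with $u$ and $v$ swapped), places both $x_1$ and $x_2$ in $R(u, v)$.

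For $n \ge 5$, I would first invoke the inductive hypothesis on the two strictly shorter induced subpaths $u x_1 \cdots x_{n-2}$ and $x_1 \cdots x_{n-2} v$ to obtain $x_i \in R(u, x_{n-2})$ and $x_i \in R(x_1, v)$ for every $i$. Axiom (J4) applied to $(x, y) = (x_1, x_{n-2})$, together with its symmetric $u,v$-swap, then places the two near-endpoint vertices $x_1$ and $x_{n-2}$ in $R(u, v)$. For a truly interior vertex $x_j$ with $2 \le j \le n - 3$, whenever $j \le n - 5$ Axiom (J4') applied to $(x, y) = (x_j, x_{j+2})$ delivers $x_j \in R(u, v)$: the inductive hypothesis supplies the premises $x_j \in R(u, x_{j+2})$ and $x_{j+2} \in R(x_j, v)$, while the non-adjacency conditions follow immediately from $P_n$ being induced (with $j \ge 2$ and $j+2 \le n-3$). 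Symmetrically, Axiom (J4') with $y = x_{j-2}$ handles every $j \ge 4$; together these two ranges cover $[2, n-3]$ as soon as $n \ge 8$.

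The residual positions $(n, j) \in \{(5, 2), (6, 2), (6, 3), (7, 3)\}$ are closed by Axiom (TW1') applied to the concretely chosen four-term chains $(x_1, x_2, x_3, x_3)$, $(x_1, x_2, x_3, x_4)$, $(x_2, x_3, x_4, x_4)$ and $(x_2, x_3, x_4, x_5)$, respectively, whose endpoints $x, y$ have already been placed in $R(u, v)$ by the previous (J4) and (J4') steps. The coincidence $w = y$ used in some of these chains is legitimate because the hypothesis $R(w, y) = \{w, y\}$ then reduces to $R(y, y) = \{y\}$ via axiom (t3).

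For the second part, given $z$ adjacent to an interior $x_i$ with $2 \le i \le n - 3$, the first part yields $x_i, x_{i+1} \in R(u, v)$, and I would apply Axiom (TW1') with $x = w = x_i$, $y = x_{i+1}$, and the given $z$. The coincidence $x = w$ collapses the two hypotheses $R(x, z) = \{x, z\}$ and $R(z, w) = \{z, w\}$ into the single statement that $z$ is a neighbor of $x_i$ in $G_R$, while the remaining non-adjacency conditions $R(x_i, v) \ne \{x_i, v\}$, $R(u, x_{i+1}) \ne \{u, x_{i+1}\}$, $R(u, x_i) \ne \{u, x_i\}$ hold because $x_i$ is adjacent to neither $u$ nor $v$ and $x_{i+1}$ is inner with $i + 1 \ge 3$. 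The main obstacle is the absence of a single uniform axiom like (JC): the weaker trio (J4), (J4'), (TW1') leaves finitely many small-$n$ gaps in the first part that must each be filled by an ad hoc, possibly degenerate, four-term chain whose non-adjacency conditions must be individually verified.
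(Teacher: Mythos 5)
Your proposal is correct and follows essentially the same route as the paper: induction on the path length, with (J2)/(J4) handling the vertices adjacent to the endpoints, (J4') handling the deep interior vertices via pairs at distance two, and degenerate instantiations of (TW1') filling the remaining small-$n$ positions and yielding the second assertion. Your bookkeeping of exactly which positions need (TW1') (the four residual pairs for $n\in\{5,6,7\}$) is in fact more explicit than the paper's, and the degenerate uses of (TW1') with repeated elements are sanctioned by the paper's own remark that (TW1) and (TW2) arise from (TW1') by setting $w=z$ and $x=y$.
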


\begin{proof}
If $n=2$, then $P_2=uv$ and $R(u,v)=\{u,v\}$ by the definition of $G_R$. If $n=3$, then $P_3=uxv$ and $x\in R(u,v)$ by Axiom (J2). Let now $n=4$ and $P_4=uxyv$. By Axiom (J2) we have $x\in R(u,y)$ and $y\in R(x,v)$. Now, Axiom (J4) implies that $x,y\in R(u,v)$. If $n=5$, $P_5=uxx_2yv$ and by the previous step, $x,x_2 \in R(u,y)$ and $x_2,y \in R(x,v)$. By Axiom (J4) $x,y\in R(u,v)$ and $x_2\in R(u,v)$ hold by Axiom (TW1') when $z=x_2=w$. If $n=6$, $P_6=uxx_2x_3yv$, then by case $n=5$, we have $\{x,x_2,x_3 \} \in R(u,y)$ and $\{x_2,x_3,y \}\in R(x,v)$. By Axiom (J4) $x,y\in R(u,v)$ and $x_2, x_3 \in R(u,v)$ hold by Axiom (TW1'). For $n=7$, $P_7=uxx_2x_3x_4yv$ by the case $n=5$, $\{x,x_2,x_3 \} \in R(u,x_4)$ and $\{x_3,x_4,y\} \in R(x_2,v)$.  That is $x_2\in R(u,x_4)$, $x_4 \in R(x_2, v)$, $R(u,x_2) \neq \{u,x_2\}$, $R(x_2,x_4) \neq \{x_2,x_4\}$, $R(x_4,v) \neq \{x_4,v\}$, and $R(u,v)\neq \{u,v\}$. By Axiom (J4') we have $x_2, x_4 \in R(u,v)$ and by Axiom (TW1') we have $x,x_3,y \in R(u,v)$.
For a longer path $P_n=uxx_2\dots x_{n-2}yv$, $n>7$, we continue by induction. By the induction hypothesis we have $\{u,x,x_3,\dots,x_{n-2},y\}\subseteq R(u,y)$ and $\{x,x_3,\dots,x_{n-2},y,v\}\subseteq R(x,v)$. In particular, $x_i\in R(u,x_{i+2})$ and $x_{i+2}\in R(x_i,v)$ for every $i\in\{2,\dots,n-4\}$. By Axiom (J4') we get $x_i,x_{i+2}\in R(u,v)$ and by Axiom (TW1') we have $x,x_{i+1},y \in R(u,v)$ for every $i\in\{2,\dots, n-2\}$.

For the second part, let $z$ be a neighbor of $x_i$, $i\in\{2,\dots,n-2\}$ that is not adjacent to $u,v$. Clearly, in this case $n\geq 5$. By the first part of the proof, we have $x_i\in R(u,v)$ and we have $z\in R(u,v)$ by Axiom (TW2) which follows from Axiom (TW1').  \qed\bigskip
\end {proof}

\begin{theorem} \label{at-free}
If $R$ is a transit function on a non-empty finite set $V$ satisfying the Axioms (b1'), (J2),  (J4), (J4') and (TW1'), then $G_R$ is $AT$-free graph.
\end{theorem}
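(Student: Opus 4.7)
The plan is to argue by contradiction via Theorem~\ref{AT-free}. Suppose $G_R$ is not AT-free; then it contains one of the graphs $C_k$ ($k\geq 6$), $T_2$, $X_2$, $X_3$, $X_{30},\dots,X_{41}$, $XF_2^{n+1}$, $XF_3^{n}$, $XF_4^{n}$ as an induced subgraph $F$, with the distinguished asteroidal triple $\{u,x,v\}$ labelled as in Figure~\ref{fig2}. In every such $F$ the three vertices of the asteroidal triple are pairwise non-adjacent, so in particular $vx\notin E(G_R)$ and hence $R(v,x)\neq\{v,x\}$. Note also that the axioms (b2), (J2), (J4), (J4') and (TW1') provide exactly the hypotheses of Lemma~\ref{easy}, so all induced paths of $G_R$ place their vertices (and suitable second-neighbours) into the corresponding $R$-intervals.

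The uniform strategy is to derive both $x\in R(u,v)$ and $v\in R(u,x)$, and then invoke axiom (b1') applied to $x\in R(u,v)$ to force $v\notin R(u,x)$, contradicting the second membership. For the cycles $C_k$ with $k\geq 6$ both memberships follow immediately from Lemma~\ref{easy}, because the two arcs of the cycle are induced paths in $G_R$ -- one going from $u$ to $v$ through $x$, and the other from $u$ to $x$ through $v$. For each remaining forbidden subgraph $x$ is either a pendant or is attached to the rest of $F$ only through a hub vertex (the near-universal vertex $y_2$ in the $XF$-family, or an analogous central vertex in the $X$-family), so no induced $u,v$-path of $F$ passes through $x$. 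Here I would first apply Lemma~\ref{easy} to the induced $u,v$-path of $F$, which loads all of its internal vertices and their non-$\{u,v\}$-neighbours into $R(u,v)$, and then apply (TW1') with a length-three witness $x_{\mathrm{ax}}{-}z_{\mathrm{ax}}{-}w_{\mathrm{ax}}{-}y_{\mathrm{ax}}$ in which $z_{\mathrm{ax}}=x$ and $x_{\mathrm{ax}},y_{\mathrm{ax}}\in R(u,v)$ are hub-neighbours of $x$ already placed in $R(u,v)$ by the previous step; this yields $x\in R(u,v)$. Interchanging the roles of $v$ and $x$ and running the same argument on the induced $u,x$-path of $F$ produces $v\in R(u,x)$.

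The main obstacle is the mechanical but lengthy verification that, for each listed forbidden subgraph, the side conditions of (TW1') -- namely $x_{\mathrm{ax}}\neq u$, $y_{\mathrm{ax}}\neq v$, $R(x_{\mathrm{ax}},v)\neq\{x_{\mathrm{ax}},v\}$, $R(u,y_{\mathrm{ax}})\neq\{u,y_{\mathrm{ax}}\}$ and $R(u,w_{\mathrm{ax}})\neq\{u,w_{\mathrm{ax}}\}$ -- are all met inside $F$. Since $F$ is an \emph{induced} subgraph of $G_R$, every required non-adjacency can be read off from the figures, and the bulkier graphs $X_{30}$--$X_{41}$ and the $XF$-families with small $n$ are handled by the same template without needing any new axiomatic ingredient beyond Lemma~\ref{easy}, (TW1') and (b1').
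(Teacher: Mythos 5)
Your high-level strategy is exactly the paper's: reduce to the forbidden-subgraph characterization of Theorem~\ref{AT-free}, and for each forbidden graph $F$ with the labelled triple $u,v,x$ of Figure~\ref{fig2} derive both $x\in R(u,v)$ and $v\in R(u,x)$, which together with $R(v,x)\neq\{v,x\}$ contradicts (b1'). (One small slip: Lemma~\ref{easy} needs (J2), (J4), (J4') and (TW1'), not (b2), which is not even a hypothesis of the theorem.) The difficulty is that the entire content of this theorem is the case-by-case verification, and the uniform template you propose to run mechanically is wrong for a substantial fraction of the cases, so the ``mechanical but lengthy verification'' you defer would not go through as described.

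Concretely: (i) your structural claim that for every non-cycle $F$ no induced $u,v$-path passes through $x$ is false --- $X_{37}$, $X_{38}$, $X_{39}$, $X_{40}$ and $XF_4^{n}$ all contain an induced $u,v$-path through $x$ (for these Lemma~\ref{easy} alone gives $x\in R(u,v)$, so here the error only misdraws the case split). (ii) The single ``(TW1') with two hub-neighbours of $x$'' move does not cover $T_2$, where $x$ is at distance $2$ from the unique induced $u,v$-path and has only one neighbour, so two successive applications of (TW2) are required; nor graphs such as $X_2,X_3,X_{30},X_{31},X_{32},X_{33},X_{35},X_{41}$, where $x$ has a \emph{single} neighbour on the relevant path and (TW2) (equivalently the second part of Lemma~\ref{easy}) is the right tool, and only when that neighbour avoids $N(u)\cup N(v)$. (iii) Most seriously, ``interchanging the roles of $v$ and $x$'' does not produce $v\in R(u,x)$ by the same argument, because the configurations around $v$ and $x$ are not symmetric: in $X_2$ the only induced $u,x$-path is $uabx$, the vertex $v$ is at distance $2$ from it, and one must first derive $c,d\in R(u,x)$ via the full four-vertex form of (TW1') before (TW2) reaches $v$; and in the parametrized families $XF_2^{n+1}$, $XF_3^{n}$, $XF_4^{n}$ one has to propagate along $p_1,\dots,p_n$ with $n-1$ iterated applications of (TW1) before $v$ can be placed in $R(u,x)$ --- a single length-three witness cannot reach $v$ for large $n$. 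So the skeleton matches the paper, but the verification step, which is where the theorem actually lives, needs a genuinely more varied toolkit (Lemma~\ref{easy} applied to paths through $x$ or through $v$, iterated (TW2), iterated (TW1), and (TW1') with $w\neq z$) than the one template you describe.
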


\begin{proof}	
Let $R$ be a transit function satisfying Axioms (b1'), (J2), (J4), (J4') and (TW1'). Axiom (TW1') implies that also Axioms (TW1) and (TW2) hold. We have to prove that $G_R$ is AT-free. By Theorem \ref{AT-free} it is enough to prove that $G_R$ does not contain as an induced subgraph any of the graphs $C_k, T_2,X_2, X_3,$ $X_{30},\dots,X_{41}, XF_2^{n+1}, XF_3^{n},XF_4^{n}$, $k\geq 6$, $n\geq 1$, depicted on Figure~\ref{fig2}. We will show that if $G_R$ contains one of the graphs from Figure \ref{fig2} as an induced subgraph, then we get a contradiction to Axiom (b1'). For this we need to find vertices $u,v,x$ such that $x\in R(u,v)$, $v\neq x$, $R(v,x)\neq\{v,x\}$ and $v\in R(u,x)$. For this, we use vertices $u,v,x$ as marked in Figure \ref{fig2}. Notice that in all graphs of Figure \ref{fig2} we have $v\neq x$ and $R(v,x)\neq\{v,x\}$.

First, we show that $x\in R(v,u)$ holds for all graphs from Figure \ref{fig2}. There exists an induced $u,v$-path that contains $x$ in the graphs $C_k$, $k\geq 6$, $X_{37},X_{38},X_{39},$ $X_{40}$ and $XF_4^n$, $n\geq 1$. By Lemma \ref{easy} $x\in R(u,v)$ for these graphs. For graphs $X_2,X_3,X_{30},X_{31},X_{32},X_{33},$ $X_{35},X_{41}$ and $XF_2^{n+1}$ for $n\geq 2$ there exists an induced $u,v$-path with an inner vertex not adjacent to $u$ nor to $v$, but to $x$. Hence, $x\in R(u,v)$ by Axiom (TW2). Similarly, we see that $x\in R(u,v)$ in $T_2$, only that here we use Axiom (TW2) twice. For graphs $X_{34},X_{36}$ and $XF_3^n$, $n\geq 1$, there exists an induced $u,v$-path such that two different inner vertices are both adjacent to $x$. Thus, $x\in R(u,v)$ by Axiom (TW1). Finally, for $XF_2^2$ we have $y_2\in R(u,v)$ by Axiom (TW1) because it is adjacent to two different inner vertices of an induced $u,v$-path. Now, $x\in R(u,v)$ follows by Axiom (TW2). 

It remains to show that $v\in R(u,x)$ for all graphs in Figure \ref{fig2}. There exists an induced $u,x$-path that contains $v$ in $X_{37},X_{38}$ and $C_k$, $k\geq 6$ and $v\in R(u,x)$ according to the Lemma \ref{easy}. For graphs $X_3,X_{31},X_{32},X_{33},X_{34},X_{35},X_{36},X_{39},X_{40}$ there exists an induced $u,x$-path such that two different inner vertices are adjacent to different adjacent vertices, one of them being $v$. Thus, $v\in R(u,x)$ by Axiom (TW1'). For graphs $X_{30},X_{41}$ exists an induced $u,x$-path with an inner vertex not adjacent to $u$ nor to $x$, but to $v$. Therefore, $v\in R(u,x)$ by Axiom (TW2). Similarly, we see that $v\in R(u,x)$ in $T_2$, only that here we use Axiom (TW2) twice. In $X_2$ we have only one induced $u,x$-path $uabx$. For these four vertices we get $c,d\in R(u,x)$ by Axiom (TW1'). By Axiom (TW2) we get $v\in R(u,x)$. We are left with $XF_2^2,XF_3^n$ and $XF_4^n$, $n\geq 1$. Here $p_1,y_2\in R(u,x)$ since $up_1y_2,x$ is an induced path. Now we use Axiom (TW1) $n-1$ times to get $p_2,\dots,p_n\in R(u,x)$. Finally, $v\in R(u,x)$ by Axiom (TW2) for $XF_2^{n+1}$ and by Axiom (TW1) for $XF_3^n$ and $XF_4^n$. \hfill\qed\bigskip
\end{proof}

\begin{theorem} \label{AT1-free}
If $R$ is a transit function on a non-empty finite set $V$ satisfying the Axioms (b1'), (b2'), (J2), (J4), (J4'), (TW1') and (TWA) then $T= R$ on $G_R$. 
\end{theorem}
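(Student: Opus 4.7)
\noindent The plan is to follow the two-part template of Theorem \ref{chord}, proving both inclusions $R(u,v) \subseteq T(u,v)$ and $T(u,v) \subseteq R(u,v)$ on $G_R$; by Theorem \ref{at-free}, $G_R$ is AT-free, which will be exploited below. The cases $u=v$ and $uv \in E(G_R)$ are trivial (both sides equal $\{u,v\}$ in the latter), so I assume $u \neq v$ and $uv \notin E(G_R)$.

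\noindent For $R(u,v) \subseteq T(u,v)$, given $x \in R(u,v)\setminus\{u,v\}$ I would iterate Axiom (TWA) from $x_0 = x$: at each step it produces a $G_R$-neighbor $x_{i+1}$ of $x_i$ with $R(x_{i+1},v) \subsetneq R(x_i,v)$, $x_{i+1} \in R(u,v)$, and $R(u,x_{i+1}) \neq \{u,x_{i+1}\}$. Finiteness of $V$ forces termination at some $x_q = v$, and the strict descent forces $v\not\sim_{G_R} x_i$ for $0 \leq i \leq q-2$ (else $R(x_i,v) = \{x_i,v\}$ would leave no proper subset big enough to contain $x_{i+1}$). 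Applying (TWA) with $u$ and $v$ interchanged from $y_0 = x$ yields a symmetric sequence $y_0,\ldots,y_r = u$. The concatenation $W = u\, y_{r-1} \cdots y_1\, x\, x_1 \cdots x_{q-1}\, v$ is then verified to be a toll $u,v$-walk containing $x$ using the non-adjacencies just established together with $uv\notin E(G_R)$, giving $x\in T(u,v)$.

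\noindent For $T(u,v) \subseteq R(u,v)$, given $x\in T(u,v)\setminus\{u,v\}$ I invoke Lemma \ref{toll1} to obtain induced paths $P = u u_1 \cdots u_{s-1} x$ avoiding $N[v]\setminus\{x\}$ and $Q = x w_1 \cdots w_{t-1} v$ avoiding $N[u]\setminus\{x\}$. If $s=t=1$, Axiom (J2) yields $x\in R(u,v)$. If exactly one of $s,t$ is $1$ (say $s=1$), the concatenation $u x w_1 \cdots w_{t-1} v$ has no chords (the only candidates $u w_j$ are killed by $Q$ avoiding $N[u]$), so Lemma \ref{easy} applies. When $s,t\geq 2$ and $P \cup Q$ is already induced, Lemma \ref{easy} again applies. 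The remaining case has $s,t\geq 2$ with a chord $u_a w_b$ ($1\leq a\leq s-1$, $1\leq b\leq t-1$); here I would pick one chord $(a^*,b^*)$ minimizing $a^* + (t-b^*)$ to get an induced shortcut $u,v$-path $P'$ (so $u_{a^*},w_{b^*}\in R(u,v)$ by Lemma \ref{easy}) and another chord $(a,b)$ minimizing $(s-a)+b$ to get an induced cycle through $x$ of length at most $5$ by Theorem \ref{AT-free} ($G_R$ is $C_k$-free for $k\geq 6$). In the triangle sub-case $(s-a,b)=(1,1)$, both $u_{s-1}$ and $w_1$ lie in $R(u,v)$ and are adjacent to $x$ and to one another, and Axiom (TW1') applied with $(x',y',z',w')=(u_{s-1},w_1,x,x)$ yields $x\in R(u,v)$; in $4$- and $5$-cycle sub-cases I apply Axiom (TW2) (the $z=w$ specialization of (TW1')) when one of $s,t$ is at least $3$, and invoke (TW1') with a bridging vertex from the induced cycle otherwise.

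\noindent The step I expect to be the main obstacle is the combination of an induced $5$-cycle through $x$ with the boundary condition $s=2$ (or its symmetric version $t=2$). There, the natural neighbor of $x$ on the shortcut path $P'$ turns out to be adjacent to $u$, so (TW2) does not apply. I would handle this by applying (TW1') with a carefully chosen bridging vertex $w'$ taken from the induced $5$-cycle, where the required non-edge conditions $R(u,w')\neq\{u,w'\}$ and $R(w',v)\neq\{w',v\}$ are verified from the minimality of $(a,b)$ and the fact that the cycle has no internal chords; this boundary verification is where the AT-free hypothesis really does its work.
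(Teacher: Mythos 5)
Your first inclusion ($R(u,v)\subseteq T(u,v)$) is essentially the paper's argument: iterate Axiom (TWA) in both directions from $x$, use the strict descent $R(x_{i+1},v)\subset R(x_i,v)$ both to force termination and to rule out premature adjacencies to $v$ (resp.\ $u$), and concatenate into a toll walk. That part is sound. The genuine gap is in the second inclusion, in the chord case. Your shortcut path $P'$ is built from the chord $u_{a^*}w_{b^*}$ minimizing $a^*+(t-b^*)$, so Lemma~\ref{easy} certifies membership in $R(u,v)$ only for $u_1,\dots,u_{a^*},w_{b^*},\dots,w_{t-1}$; this minimality pulls the chosen chord \emph{toward} $u$ and $v$. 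Your induced cycle through $x$ is built from the chord minimizing $(s-a)+b$, which pulls \emph{toward} $x$. In the triangle sub-case you then assert that $u_{s-1}$ and $w_1$ lie in $R(u,v)$, but nothing you have proved puts them there: they lie on $P'$ only when $a^*=s-1$ and $b^*=1$, and $(s-1)+(t-1)$ is the \emph{maximum} of $a+(t-b)$, not its minimum. With that premise missing, your (TW1') application (which is otherwise correctly instantiated) has nothing to fire on, and the same unproved membership claim is needed in the $4$- and $5$-cycle sub-cases; you yourself flag the $5$-cycle boundary case as unresolved. Two further loose ends: you never address the possibility that $P$ and $Q$ share internal vertices (in which case $P\cup Q$ is not a path and neither chord-minimization is well posed as stated), and when the neighbor of $x$ on the certified induced path is adjacent to $u$ (the case $a^*=1$, i.e.\ $R(u,x')=\{u,x'\}$), both Axiom (TW2) and the second claim of Lemma~\ref{easy} are structurally unavailable, so no amount of local cycle analysis around $x$ closes the argument.

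This last configuration is exactly where the paper does its real work, by a different device. It extracts an induced $u,v$-path $Q$ from the toll walk itself, uses AT-freeness to find a neighbor $x'$ of $x$ on $Q$, dispatches the easy case $R(u,x')\neq\{u,x'\}$, $R(x',v)\neq\{x',v\}$ via Lemma~\ref{easy}, and in the hard case $R(u,x')=\{u,x'\}$ introduces a \emph{second} induced $x,v$-path $S$ avoiding $N[u]$ entirely (supplied by Lemma~\ref{toll1}). Membership in $R(u,v)$ is first established at the vertex where $S$ re-enters $Q$ (via (TW1) or (TW1'), using that induced cycles have length at most five) and is then propagated backward along $S$ all the way to $x=s_0$ by repeated applications of (TW2); the propagation works precisely because every vertex of $S$ is non-adjacent to $u$ by construction. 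Your two-path chord analysis has no analogue of this return path disjoint from $N[u]$, and that is the missing idea rather than a repairable detail.
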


\begin{proof}
Let $u$ and $v$ be two distinct vertices of $G_R$ and first assume that $x\in R(u,v)$. We have to show that $x\in T(u,v)$ on $G_R$. Clearly $x\in T(u,v)$ whenever $x\in \{u,v\}$. So, assume that $x\notin \{u,v\}$. If $R(u,x)=\{u,x\}$ and $R(x,v)=\{x,v\}$, then $uv\notin E(G_R)$ by the definition of $G_R$. Therefore, $uxv$ is a toll walk of $G_R$ and $x\in T(u,v)$ follows. Suppose next that $R(x,v)\neq \{x,v\}$. We will construct an $x,v$-path $Q$ in $G_R$ without a neighbor of $u$ (except possibly $x$). 
For this, let $x=x_0$. By Axiom (TWA) there exists a neighbor $x_1$ of $x_0$ where $x_1\in R(x_0,v)\cap R(u,v)$, $R(u,x_1)\neq\{u,x_1\}$ and $T(x_1,v) \subset T(x,v)$.
Since $x_1\neq v$ and $x_1\in R(u,v)$, we can continue with the same procedure to get $x_2 \in R(u,v) \cap R(x_1,v)$, where $R(x_1,x_2)=\{x_1,x_2\}$, $x_2\neq x_1$, $R(u,x_2)\neq \{u,x_2\}$, and $R(x_2,v)\subset R(x_1,v)$.  If $x_2=v$, then we stop. Otherwise, we continue and get $x_3 \in R(u,v) \cap R(x_2,v)$, where $R(x_2,x_3)=\{x_2,x_3\}$, $x_3\neq x_2$, $R(u,x_3)\neq \{u,x_3\}$ and $R(x_3,v)\subset R(x_2,v)$.
 By repeating this step we obtain a sequence of vertices $x_0,x_1,\dots,x_q$, $q\geq 2$, such that 
\begin{enumerate}
    \item $R(x_i,x_{i+1})=\{x_i,x_{i+1}\}, i\in\{0,1,\dots,q-1\}$,
    \item $R(u,x_i) \neq \{u,x_i\}, i\in [q]$,
    \item $R(x_{i+1},v) \subset R(x_i,v), i\in\{0,1,\dots,q-1\}$.
\end{enumerate}  
Clearly, this sequence should stop by the last condition, because $V$ is finite. Hence, we may assume that $x_q=v$. Now, if $R(u,x)=\{u,x\}$, then we have a toll $u,v$-walk $uxx_1\dots x_{q-1}v$ and $x\in T(u,v)$. Otherwise, $R(u,x)\neq\{u,x\}$ and we can symmetrically build a sequence $u_0,u_1,\dots,u_r$, where $u_0=x$, $u_r=u$ and $u_0u_1\dots u_r$ is an $x,u$-path in $G_R$ that avoids $N[v]$. Clearly, $uu_{r-1}u_{r-2}\dots $ $ u_1xx_1\dots x_{q-1}v$ is a toll $u,v$-walk and $x\in T(u,v)$.

Suppose now that $x \in T(u,v)$ and $x\notin\{u,v\}$. We have to show that $x\in R(u,v)$. By Lemma~\ref{toll1} $N[u]-x$ does not separate $x$ and $v$ and $N[v]-x$ does not separate $u$ and $x$. Let $W$ be a toll $u,v$-walk containing $x$. Clearly $W$ contains an induced $u,v$-path, say $Q$. By Lemma \ref{easy} we have $V(Q)\subseteq R(u,v)$. If $x$ belongs to $Q$, then $x\in R(u,v)$. Therefore, we may assume that $x$ does not belong to $Q$. Moreover, we may assume that $x$ does not belong to any induced $u,v$-path. The underlying graph $G_R$ is $AT$-free by Theorem~\ref{at-free}. Thus, $Q$ contains a neighbor of $x$, say $x'$. If $R(u,x')=\{u,x'\}$ and $R(x',v)= \{x',v\}$, then we have a contradiction with $W$ being a toll $u,v$-walk containing $x$. Without loss of generality, we may assume that $R(x',v)\neq \{x',v\}$. If also $R(u,x')\neq \{u,x'\}$, then $x\in R(u,v)$ by the second claim of Lemma \ref{easy}. So, let now $R(u,x')=\{u,x'\}$. Since $x$ and $v$ are not separated by $N[u]-\{x\}$ by the Lemma \ref{toll1}, there exists an induced path $x,v$ $S$ without a neighbor of $u$. Let $S=s_0s_1\cdots s_k$, $s_0=x$ and $s_k=v$ and let $s_j$ be the first vertex of $S$ that also belongs to $Q$. Notice that $s_j$ can be equal to $v$ but it is different from $x'$ and that $j>0$. If $j=1$, then $x\in R(u,v)$ by Axiom (TW1) (which follows from Axiom (TW1')). If $j=2$, then $x\in R(u,v)$ by Axiom (TW1'). Hence, $j>2$. We may choose $S$ such that it minimally differs from $Q$. This means that $s_0,\dots,s_{j-2}$ may be adjacent only to $x'$ on $Q$ before $s_j$.

Suppose now that $s_j$ is adjacent to $x'$. This means that $s_j\neq v$ because $v$ is not adjacent to $x'$. Let $s_i$ be the last vertex of $S$ adjacent to $x'$ ($s_0=x$ is adjacent to $x'$). If $s_i=s_{j-1}$, then $s_{j-1}\in R(u,v)$ by Axiom (TW1). Clearly, $R(s_{j-1},u)\neq\{s_{j-1},u\}$ and $R(s_{j-1},v)\neq \{s_{j-1},v\}$ and we can use Axiom (TW2) (which follows from Axiom TW1') to get $s_{j-2}\in R(u,v)$. If we continue with the same step $j-2$ times, then we get $s_{\ell}\in R(u,v)$, respectively, for $\ell\in\{j-3,j-4,\dots,0\}$. So, $s_0=x\in R(u,v)$ and we may assume that $s_j$ is not adjacent to $x'$. Now, $s_j$ can be equal to $v$. Assume first that $s_j\neq v$. Cycle $x'x\xrightarrow{S}s_j\xrightarrow{Q}x'$ has at least six vertices and must contain some chords, since $G$ is AT-free by Theorem \ref{at-free}. If $x's_{j-1}\in E(G)$, then we get $s_0=x\in R(u,v)$ by the same steps as before (when $s_j$ was adjacent to $x'$). If $x's_{j-1}\notin E(G)$, then $x's_{j-2}\in E(G)$ and $d(x',s_j)=2$, otherwise we have an induced cycle of length at least six, which is not possible in AT-free graphs. Now, $s_{j-2}\in R(u,v)$ by Axiom (TW1'). Next, we continue $j-2$ times with Axiom (TW2) to get $s_{\ell}\in R(u,v)$, respectively, for $\ell\in\{j-3,j-4,\dots,0\}$. Again $s_0=x\in R(u,v)$ and we may assume that $s_j$ equals $v$. Again $S_{j-1}x'\in E(G)$ or $S_{j-12}x'\in E(G)$ because otherwise we have an induced cycle of length at least six, which is not possible. If $S_{j-1}x'\in E(G)$, then there exists an induced path $u,v$ in $G$ that contains $s_{j-1}$ and $s_{j-1}\in R(u,v)$ by Lemma \ref{easy}. We continue as at the beginning of this paragarph, only that we replace $s_j$ with $s_{j-1}$ (and all the other natural changes) and we get $x\in R(u,v)$. Finally, if $S_{j-1}x'\notin E(G)$, then $s_{j-2}x'\in E(G)$ again by Lemma \ref{easy}. We continue $j-2$ times with Axiom (TW2) to get $s_{\ell}\in R(u,v)$, respectively, for $\ell\in\{j-3,j-4,\dots,0\}$. Again, $s_0=x\in R(u,v)$. This completes the proof because $s_0=x\in R(u,v)$.\qed\bigskip
\end{proof}

It is easy to see that for any graph $G$, the toll walk transit function satisfies the Axioms (J2) and (TW1'). By Corollary~\ref{ATDCH2-free}, Theorems~\ref{at-free} and \ref{AT1-free} and Proposition \ref{ba} we have the following characterization of the toll walk transit function of AT-free graph.

\begin{theorem}
A transit function $R$ on a finite set $V$ satisfies the Axioms (b1'), (b2'), (J2), (J4), (J4'),  (TW1') and (TWA) if and only if $G_R$ is an AT-free graph and $R=T$ on $G_R$.
\end{theorem}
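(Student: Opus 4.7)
The plan is to assemble this characterization directly from results already established earlier in the paper; no genuinely new work is needed, only a careful verification that every axiom in the statement is covered in both directions.

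For the forward direction, assume $R$ satisfies (b1'), (b2'), (J2), (J4), (J4'), (TW1') and (TWA) on $V$. First I would invoke Theorem~\ref{at-free}, which only needs (b1'), (J2), (J4), (J4') and (TW1'), to conclude that $G_R$ is an AT-free graph. Next I would apply Theorem~\ref{AT1-free}, which requires exactly the full list (b1'), (b2'), (J2), (J4), (J4'), (TW1') and (TWA), to conclude $R = T$ on $G_R$. So this direction is a direct consequence of these two theorems.

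For the converse, assume $G_R$ is AT-free and $R = T$ on $G_R$. I must check that $T$ satisfies each of the seven axioms when the underlying graph is AT-free. Axioms (J2) and (TW1') hold for the toll walk transit function on every graph (as noted in the paragraph preceding the theorem), so they hold here. Axiom (J4) holds on every graph by Proposition~\ref{ba}. The AT-free hypothesis then supplies the remaining four axioms: (b1') via Corollary~\ref{ATDCH2-free}, (b2') and (J4') via Proposition~\ref{j4'}, and (TWA) via Proposition~\ref{Twa}.

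There is no real obstacle beyond bookkeeping: the theorem is a packaging statement that bundles Theorem~\ref{at-free}, Theorem~\ref{AT1-free}, Corollary~\ref{ATDCH2-free}, Propositions~\ref{ba}, \ref{j4'} and \ref{Twa}, together with the general validity of (J2) and (TW1') for $T$, into a single if-and-only-if. The only thing to be careful about is making sure the list of axioms quoted in the statement matches exactly the hypotheses of the cited earlier results, so that neither direction is missing a verification step.
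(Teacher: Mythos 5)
Your proposal is correct and follows essentially the same route as the paper, which likewise assembles the theorem from Theorems~\ref{at-free} and \ref{AT1-free} for the forward direction and from the earlier axiom-verification results for the converse. In fact your bookkeeping is slightly more complete than the paper's one-sentence justification, which cites only Corollary~\ref{ATDCH2-free}, Theorems~\ref{at-free} and \ref{AT1-free} and Proposition~\ref{ba} but silently also needs Propositions~\ref{j4'} and \ref{Twa} for (b2'), (J4') and (TWA), exactly as you point out.
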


A four-cycle $axyva$ together with an edge $ua$ form a $P$-graph and a five-cycle $axybva$ together with an edge $ua$ form a $5$-pan graph. It is straightforward to check that the toll walk transit function $T$ does not satisfy Axiom (J3) on $P$-graph and $5$-pan graph. From the definitions of Axioms (J3), (J4) and (J4'), it is clear that Axiom (J3) implies both Axioms (J4) and (J4'). Therefore, we have the following corollary. 

\begin{corollary}
A transit function $R$ on a finite set $V$ satisfies Axioms (b1'), (b2'), (J2), (J3), (ba), (TW1') and (TWA) if and only if $G_R$ is an ($P$, $5$-pan,AT)-free graph and $R=T$ on $G_R$.
\end{corollary}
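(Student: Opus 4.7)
The plan is to leverage Theorem~\ref{AT1-free} as a black box in both directions, isolating Axiom (J3) as the only genuinely new ingredient. On one hand, (J3) strictly implies both (J4) and (J4'); on the other, the preamble of the corollary already records that $T$ violates (J3) on the $P$-graph and on the $5$-pan. So the whole argument reduces to one verification per direction: first, that $P$-graph and $5$-pan cannot occur as induced subgraphs of $G_R$; second, that (J3) holds for $T$ on $(P,5\text{-pan},AT)$-free graphs.

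For the ``if'' direction I would first observe that, since (J3) subsumes (J4) and (J4'), the hypotheses of Theorem~\ref{AT1-free} are met and give $R=T$ on $G_R$ with $G_R$ being $AT$-free. If $G_R$ contained an induced $P$-graph consisting of the $4$-cycle $axyva$ together with the pendant edge $ua$, then setting $(u',v',x',y'):=(x,u,y,v)$ one checks that $x'\in T(u',y')$ (via the toll walk $x\,y\,v$), $y'\in T(x',v')$ (via $y\,v\,a\,u$), $x'\neq y'$, and $T(u',v')\neq\{u',v'\}$, while $T(x,u)=\{x,a,u\}$ does not contain $y$; this contradicts Axiom (J3) for $R=T$. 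The analogous four-variable assignment on the $5$-pan (formed by a $5$-cycle $axybva$ together with the pendant edge $ua$) likewise contradicts (J3). Hence $G_R$ is $(P,5\text{-pan},AT)$-free.

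For the ``only if'' direction assume $G=G_R$ is $(P,5\text{-pan},AT)$-free and $R=T$. Axioms (b1'), (b2'), (TW1'), and (TWA) are supplied by Corollary~\ref{ATDCH2-free} and Propositions~\ref{j4'} and~\ref{Twa}, while (J2) and (TW1') hold for $T$ on every graph, so only (J3) requires fresh verification. Assuming the hypothesis of (J3) together with, for contradiction, $x\notin T(u,v)$, I would apply Lemma~\ref{toll1} to extract a $u,x$-path $P_1$ avoiding $N(y)\setminus\{x\}$, an $x,y$-path $P_2$ avoiding $N(u)\setminus\{x\}$, an $x,y$-path $P_3$ avoiding $N(v)\setminus\{y\}$, and a $y,v$-path $P_4$ avoiding $N(x)\setminus\{y\}$, and (without loss of generality) identify a neighbor $u'$ of $u$ on $P_4$ closest to $y$. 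When none of $ux,yv,xy$ is an edge, Proposition~\ref{j4'} already supplies the conclusion; when both $ux$ and $yv$ are edges, Proposition~\ref{ba} does. The genuinely new work lies in the ``border'' subcases where exactly one or two of $ux,yv,xy$ are edges: here I would show that the resulting configuration produces either an asteroidal triple on $\{u,x,v\}$ (contradicting $AT$-freeness) or an induced $4$-cycle with a single pendant edge (a $P$-graph) or an induced $5$-cycle with a single pendant edge (a $5$-pan).

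The main obstacle will be this last case analysis: in each border subcase one must simultaneously rule out the chords that would dissolve the asteroidal configuration and track those chords that are \emph{forced} by $AT$-freeness so that a short cycle with a pendant vertex actually materialises. The length of the $u',y$-subpath of $P_4$ and the precise location of any neighbor of $v$ on $P_1$ will dictate whether the extracted obstruction is the $P$-graph or the $5$-pan, and a clean book-keeping of these possibilities (for instance, by first establishing $u'y\in E(G)$ and then analysing how the induced cycle through $u,u',y,x$ must close) will be needed to keep the argument tractable.
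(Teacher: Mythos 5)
Your skeleton coincides with the paper's (which disposes of this corollary in two sentences): reduce to Theorem~\ref{AT1-free} via the implication (J3)\,$\Rightarrow$\,(J4)\,$\wedge$\,(J4$'$), and handle the $P$-graph and the $5$-pan through violations of (J3). Your explicit witness on the $P$-graph, with $(u',v',x',y')=(x,u,y,v)$ and $T(x,u)=\{x,a,u\}$, is correct \emph{for the five-vertex graph taken in isolation}. The first genuine gap is in the ``if'' direction: the violation must survive inside $G_R$, where the $P$-graph (or $5$-pan) occurs only as an induced subgraph. The two hypothesis toll walks do transfer, since the toll conditions inspect only adjacencies among the vertices of the walk, which an induced subgraph preserves; but the negated conclusion $y\notin T_{G_R}(x,u)$ does not transfer. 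By Lemma~\ref{toll1}, $y\in T_{G_R}(x,u)$ as soon as $G_R$ contains a $y,u$-path avoiding $N[x]-\{y\}$ (the edge $xy$ already handles the other separation condition), and nothing in your configuration rules such a path out when $u$, $x$ have neighbours outside the five chosen vertices. So the step ``induced $P$-graph implies (J3) fails'' is asserted rather than proved; it needs either an appeal to AT-freeness to control these neighbourhoods or a differently chosen quadruple whose failure is forced globally. The same issue affects the $5$-pan.

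The second gap is the ``only if'' direction: the claim that $T$ satisfies (J3) on every $(P,5\text{-pan},\mathrm{AT})$-free graph is the entire content of the corollary beyond Theorem~\ref{AT1-free}, and you explicitly defer its case analysis (``the main obstacle will be this last case analysis''). The reduction to the border subcases via Propositions~\ref{j4'} and~\ref{ba} is a sensible organisation, but until the forced-chord bookkeeping actually extracts a $P$-graph, a $5$-pan or an asteroidal triple in each remaining subcase, the equivalence is not established. To be fair, the paper supplies neither argument -- it only records the easy fact that $T$ fails (J3) on the two small graphs -- so your proposal is an honest expansion of an incomplete proof rather than a wrong turn; note also that the axiom ``(ba)'' in the statement is never defined in the paper (presumably a slip for (J4), whose proposition carries that label), which neither you nor the paper addresses.
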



\section{Toll walk transit function of Ptolemaic and distance-hereditary graphs}

Kay and Chartrand \cite{kay} introduced Ptolemaic graphs as graphs in which the distances obey the Ptolemy inequality. That is, for every four vertices $u, v, w$ and $x$ the inequality $d(u,v)d(w,x) + d(u,x)d(v,w) \geq d(u,w)d(v,x)$ holds. It was proved by Howorka \cite{edwa} that a graph is Ptolemaic if and only if it is both chordal and distance-hereditary (a graph $G$ is distance hereditary, if every induced path in $G$ is isometric). Therefore, Ptolemaic graphs are also defined as chordal graphs that are $3$ fan-free in the language of forbidden subgraphs. Consider the following axiom for the characterization of the toll walk transit function of Ptolemaic graphs. \\

\noindent\textbf{Axiom (pt).} If there exist elements $u,x,y,v \in V$ such that $x,z\in R(u,y)$, $y,z\in R(x,v)$ and $R(x,y)= \{x,y\}$, then $R(x,z)\neq \{x,z\}$ and $R(y,z)\neq\{y,z\}$.\medskip

\begin{theorem}\label{pt}
The toll walk transit function $T$ on a graph $G$ satisfies Axioms (JC) and (pt) if and only if $G$ is a Ptolemaic graph.
\end{theorem}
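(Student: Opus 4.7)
The plan is to combine Theorem~\ref{ch}, which characterizes chordality via axiom (JC), with Howorka's characterization of Ptolemaic graphs as chordal $F_2^5$-free graphs; the statement then reduces, for chordal $G$, to the equivalence between (pt) on $T$ and $F_2^5$-freeness. I shall repeatedly use the elementary observation that $T(a,b)=\{a,b\}$ if and only if $ab\in E(G)$: if $ab\in E(G)$, then on any $a,b$-walk of length at least two the endpoint $b$ is itself a neighbor of $a$ on the walk distinct from the walk's second vertex, violating the toll condition, so the only toll $a,b$-walk is the edge. In particular, the conclusions $T(x,z)\neq\{x,z\}$ and $T(y,z)\neq\{y,z\}$ of (pt) are respectively equivalent to $xz\notin E(G)$ and $yz\notin E(G)$.

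For the direction (JC) and (pt) imply that $G$ is Ptolemaic, chordality is immediate from Theorem~\ref{ch}. Suppose for contradiction that $G$ contains an induced $F_2^5$ on the path $u,x,y,v$ with apex $z$ adjacent to all four. The non-edges $uy$, $uv$, $xv$ force the walks $uxy$, $uzy$, $xyv$, $xzv$ to be toll walks in $G$, so $x,z\in T(u,y)$ and $y,z\in T(x,v)$. Combined with $T(x,y)=\{x,y\}$ (from $xy\in E(G)$), the hypotheses of (pt) are met, forcing $T(x,z)\neq\{x,z\}$. But $xz\in E(G)$ gives $T(x,z)=\{x,z\}$, a contradiction. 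Hence $G$ has no induced $F_2^5$ and is Ptolemaic.

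For the converse, assume $G$ is Ptolemaic; axiom (JC) follows from Theorem~\ref{ch}. For (pt), suppose the hypotheses hold. The axiom is symmetric under the involution $(u,x)\leftrightarrow(v,y)$ (with $z$ fixed), so it suffices to deduce $xz\notin E(G)$. Suppose for contradiction that $xz\in E(G)$. From $xy\in E(G)$ and $x\in T(u,y)$, any toll $u,y$-walk through $x$ must have $x$ as its penultimate vertex, so one can extract an induced $u,x$-path $P_1$ avoiding $N(y)\setminus\{x\}$. Analogously, $y\in T(x,v)$ yields an induced $y,v$-path $Q_y$ avoiding $N(x)\setminus\{y\}$, and $z\in T(x,v)$ together with $xz\in E(G)$ yields an induced $z,v$-path $Q_z$ avoiding $N(x)\setminus\{z\}$. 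I split on whether $yz\in E(G)$: in Case~A ($yz\in E(G)$) one also extracts an induced $u,z$-path $P_2$ avoiding $N(y)\setminus\{z\}$; in Case~B ($yz\notin E(G)$) the toll $u,y$-walk through $z$ places $z$ internally, and the penultimate neighbor of $y$ on that walk plays the role $z$ does in Case~A.

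The heart of the proof is to show, in Case~A, that these induced paths collapse to edges. Consider the two cycles $C_1 = P_1\cup\{xz\}\cup P_2^{-1}$ and $C_2 = Q_y\cup\{yz\}\cup Q_z^{-1}$. Chordality forces each cycle of length at least four to have a chord; a careful chord-cascade argument, combined with $F_2^5$-freeness (which forbids a universal vertex on any induced $P_4$), reduces both cycles to triangles, yielding $|P_1|=|P_2|=|Q_y|=|Q_z|=1$. Thus $u$ is adjacent to both $x$ and $z$, and $v$ to both $y$ and $z$; together with the triangle on $\{x,y,z\}$ the vertex set $\{u,x,y,v,z\}$ either induces an $F_2^5$ (if $uv\notin E(G)$) or contains an induced $C_4$ on $u,x,y,v$ (if $uv\in E(G)$), each impossible in a Ptolemaic graph. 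Case~B then reduces to Case~A after replacing $z$ with the penultimate neighbor of $y$ on the relevant toll walk. The main obstacle is the chord-cascade step, since chordality alone admits many chord patterns in $C_1$ and $C_2$; the delicate part is to simultaneously invoke $F_2^5$-freeness to rule out every configuration that would preserve an induced subpath of length at least two.
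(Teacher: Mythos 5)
Your forward direction is correct and coincides with the paper's: an induced $3$-fan on the path $uxyv$ with apex $z$ satisfies the hypotheses of (pt) (the walks $uxy$, $uzy$, $xyv$, $xzv$ are toll walks precisely because $uy,uv,xv\notin E(G)$), while $xz\in E(G)$ gives $T(x,z)=\{x,z\}$ and violates the conclusion. The reduction of the whole statement to ``chordal plus $3$-fan-free'' via Theorem~\ref{ch} and Howorka's theorem is also exactly what the paper does.

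The converse, however, has a genuine gap at the step you yourself identify as the heart of the proof. The claim that the closed walks $C_1=P_1\cup\{xz\}\cup P_2^{-1}$ and $C_2=Q_y\cup\{yz\}\cup Q_z^{-1}$ ``reduce to triangles'', i.e.\ that $ux,uz,vy,vz\in E(G)$, is not proven, and it cannot be extracted from chordality and $F_2^5$-freeness applied to those two objects. First, $P_1\cup\{xz\}\cup P_2^{-1}$ need not be a cycle at all, since $P_1$ and $P_2$ may share internal vertices; for instance $P_1=uax$ and $P_2=uaz$ with $xz\in E(G)$ is a chordal, fan-free local configuration in which $u$ is adjacent to neither $x$ nor $z$ and nothing cascades. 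Second, even when the union is a cycle, triangulating chords (say a common neighbour of $u$, $x$ and $z$ partway along) chord the cycle completely without shortening either path to an edge. So no argument confined to $C_1$ and $C_2$ can deliver $ux\in E(G)$, and the intermediate statement your outline rests on is stronger than anything the hypotheses force locally near $u$ and $v$. The paper's proof avoids this entirely: after assuming $T(x,z)=\{x,z\}$ it works only in the vicinity of $x$, $y$, $z$, producing a common neighbour $w$ of $x$ and $z$ from the paths out of $u$, deducing $zy\in E(G)$ (else $z,x,y$ and the neighbour $y'$ of $y$ on the $y,v$-path lie on an induced cycle of length at least four), then using $3$-fan-freeness on $\{w,x,y,y',z\}$ and $\{w,x,y,z'',z\}$ to exclude the chords $zy'$ and $z''y$ (with $z''$ the neighbour of $z$ on the $z,v$-path), and finally exhibiting an induced cycle of length at least four through $z'',z,y,y'$. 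Some such local argument is what your ``chord-cascade'' would have to be replaced by; as written it is a placeholder for the entire difficulty, and the same objection applies to the unargued reduction of your Case~B to Case~A.
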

\begin{proof}
By Theorem~\ref{ch} $T$ satisfies Axiom (JC) if and only if $G$ is chordal. If $G$ contains an induced $3$-fan on the path $uxyv$ and the universal vertex $z$, then $x,z\in T(u,y)$, $y,z\in T(x,v)$, $T(x,y)=\{x,y\}$, $T(x,z)=\{x,z\}$ and $T(y,z) = \{y,z\}$. Hence, Axiom (pt) does not hold. That is, if $T$ satisfies Axioms (JC) and (pt), then $G$ is the Ptolemaic graph.

 Conversely, $G$ is chordal by Theorem \ref{ch} because $T$ satisfies Axiom (JC). Suppose that $T$ does not satisfy the Axiom (pt) on $G$. There exist distinct vertices $u,x,y,z,v$ such that  $x,z\in T(u,y)$ and  $y,z\in T(x,v)$, $T(x,y)= \{x,y\}$ and ($T(x,z)=\{x,z\}$ or $T(y,z)=\{y,z\}$). Without loss of generality, we may assume that $T(x,z) = \{x,z\}$. Since $x,z\in T(u,y)$ and $y,z\in T(x,v)$ there is an induced $u,x$-path $P$ without a neighbor of $y$ other than $x$, an induced $y,v$-path $Q$ without a neighbor of $x$ other than $y$, an induced $u,z$-path $R$ without a neighbor of $y$ (except possibly $z$) and an induced $z,v$-path $S$ without a neighbor of $x$ other than $z$.
 
 Now, assume that $z$ belongs to $P$, which also means that $z$ is not adjacent to $y$. Since $T(x,y) = \{x,y\}$, $y$ does not belong to $S$. Let $a$ be the common vertex of $P$ and $S$ that is close to $z$ as possible and $b$ be the common vertex of $Q$ and $S$ that is close to $y$ as possible. Note that $a$ may be the same as $z$, but $b$ is distinct from $y$. On a cycle $C:a\xrightarrow{P}zxy\xrightarrow{Q}b\xrightarrow{S}a$ is $y$ eventually adjacent only to vertices from $S$ between $a$ and $b$ (and not to $a$). In addition to that, the vertices of $S$ are not adjacent to $x$ nor to $z$. Hence, $y,x,z$ and the other neighbor of $z$ on $C$ are contained in an induced cycle of length at least four, a contradiction because $G$ is chordal.
 
 So, $z$ is not on $P$. We denote by $x'$ a neighbor of $x$ on $P$, by $x''$ the other neighbor of $x'$ on $P$ (if it exists) and by $z'$ the neighbor of $z$ on $R$. Let $a$ be the vertex common to $R$ and $P$ closest to $x$. Notice that $x'$ or $z'$ may be equal to $a$, but $z\neq a\neq x$ and $b\neq y$. If $zx'\notin E(G)$, then $zxx'x''$ is part of an induced cycle of length at least four or $x'=a$. As $G$ is Ptolemaic, $G$ is also chordal and there are no induced cycles of length four or more in $G$. So, $x'=a$. Now, if $za\notin E(G)$, then $xz'$ must be an edge to avoid an induced cycle that contains $axzz'$. In all cases, we obtain a triangle with edge $xz$: $zxx'z$ or $zxaz$ or $zxz'z$. We denote this triangle by $zxwz$.

 In addition, let $z''$ be the neighbor of $z$ on $S$ and $y'$ the neighbor of $y$ on $Q$. If $zy\notin E(G)$, then $z,x,y,y'$ and maybe some other vertices of $Q$ or $S$ induce a cycle of length at least four, which is not possible. So, $zy\in E(G)$. If $zy'\in E(G)$, then the vertices $w,x,y,y'$ and $z$ induce a $3$ fan, which is not possible in Ptolemaic graphs. Thus, $zy'\notin E(G)$. Similar, if $z''y\in E(G)$, then $w,x,y,z''$ and $z$ induce a $3$ fan. So, $z''y\notin E(G)$. Finally, the vertices $z'',z,y,y'$ possibly together with some other vertices from $S$ or $Q$ form an induced cycle of length at least four, a final contradiction. \hfill\qed\bigskip
\end{proof}

From Theorems~\ref{chordal} and \ref{pt} we have the following characterization of the toll walk function of the Ptolemaic graph.

\begin{theorem}
A transit function $R$ on a finite set $V$ satisfies Axioms (b2), (J2), (JC), (pt), (TW1), (TW2) and (TWC) if and only if $G_R$ is a Ptolemaic graph and $R=T$ on $G_R$.
\end{theorem}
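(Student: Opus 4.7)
The plan is to derive this characterization as an immediate corollary of Theorem~\ref{chordal} and Theorem~\ref{pt}, with essentially no new argument required beyond a clean logical combination of the two.

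For the forward direction, I would assume that $R$ satisfies all seven listed axioms and first invoke Theorem~\ref{chordal}, whose hypotheses are exactly the six axioms (b2), (J2), (JC), (TW1), (TW2), (TWC). This delivers two conclusions in one stroke: $G_R$ is chordal and $R=T$ on $G_R$. With $R$ now identified with the toll walk transit function of $G_R$, the remaining axiom (pt), together with (JC), lets me apply the forward direction of Theorem~\ref{pt} to upgrade chordality to the Ptolemaic property.

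For the converse, I would assume that $G_R$ is Ptolemaic and $R=T$ on $G_R$. Since Ptolemaic graphs are chordal (by Howorka's result recalled at the start of the section), the backward direction of Theorem~\ref{chordal}---which is itself assembled from Theorem~\ref{ch}, Proposition~\ref{prop1}, Proposition~\ref{prop2}, and the observation that the toll walk transit function always satisfies (J2), (TW1), (TW2)---guarantees that $T$ fulfils (b2), (J2), (JC), (TW1), (TW2), and (TWC). The remaining axiom (pt) is supplied by the converse direction of Theorem~\ref{pt}, applied to the Ptolemaic graph $G_R$.

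There is no substantive obstacle here; the only point requiring care is the order of invocation. Theorem~\ref{pt} is stated for the toll walk transit function $T$ rather than for an abstract transit function $R$, so in the forward direction one must first use Theorem~\ref{chordal} to pin down the identity $R=T$ before Theorem~\ref{pt} can be applied. Once that bookkeeping is respected, the two directions of the equivalence collapse into one-line invocations of the two preceding theorems.
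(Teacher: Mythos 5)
Your proposal is correct and matches the paper exactly: the paper derives this theorem in one line "From Theorems~\ref{chordal} and \ref{pt}," which is precisely the combination you spell out. Your additional remark about first establishing $R=T$ via Theorem~\ref{chordal} before applying Theorem~\ref{pt} (which is stated only for $T$) is the right bookkeeping and is implicit in the paper's argument.
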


We continue with the following axioms that are characteristic of the toll walk transit function $T$ on the distance-hereditary graphs.\medskip

\noindent\textbf{Axiom (dh).} If there exist elements $u,x,y,v,z \in V$ such that $x,y,z\in R(u,y)\cap R(x,v)$, $R(u,v)\neq \{u,v\}$, $R(x,y)= \{x,y\}$, $x\neq y$, $R(u,z)=\{u,z\}$, $R(v,z)=\{v,z\}$, then $ R(x,z) \neq \{x,z\}$ or $R(y,z) \neq \{y,z\}$.\medskip

\noindent\textbf{Axiom (dh1).} If there exist elements $u,x,y,v \in V$ such that    $x\in R(u,y)$  $y \in R(x,v)$,  $R(x,y)= \{x,y\}$, $x\neq y$, $R(u,x)\neq \{u,x\}$, $R(y,v)\neq \{y,v\}$, then $x\in R(u,v)$.\medskip

\begin{theorem}\label{dh}
The toll walk transit function $T$ on a graph $G$ satisfies Axioms (dh) and (dh1) if and only if $G$ is a distance-hereditary graph.
\end{theorem}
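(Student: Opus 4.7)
My plan is to use the forbidden induced subgraph characterization of distance-hereditary graphs (Theorem \ref{disther}): $G$ is distance-hereditary if and only if $G$ is $(H,C_5,\text{hole},D,F_2^5)$-free, where ``hole'' means $C_n$ for $n\geq 6$. The proof splits into two implications, each addressed via this list.

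For the backward direction (sufficiency of the two axioms for distance-hereditariness), I argue by contrapositive: for each forbidden induced subgraph, I exhibit explicit witnesses violating one of (dh) or (dh1), using the labels from Figure \ref{hcdf}. For $C_5$ with consecutive vertices $1,2,3,4,5$, set $u=v=1$, $x=3$, $y=4$: then $xy\in E$, $ux,yv\notin E$, and the toll walks $1,2,3$ and $4,5,1$ witness $x\in T(u,y)$ and $y\in T(x,v)$, yet $T(u,v)=T(1,1)=\{1\}$ excludes $x$, violating (dh1). The house $H$ is handled analogously with $u=v$ set to the apex and $x,y$ the two bottom corners non-adjacent to the apex. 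For a hole $C_n$ of length $n\geq 6$, take $u,v$ adjacent and $x,y$ an adjacent pair sufficiently far around the cycle so that $ux,yv\notin E$; both toll walks exist by traversing the cycle in either direction, but $T(u,v)=\{u,v\}$ misses $x$. The domino $D$ is similar, with $u,v$ the top-edge endpoints and $x,y$ the bottom-edge endpoints. Finally, for $F_2^5$ with apex $z$ and base path $u,x,y,v$, invoking (dh) with these labels shows the hypothesis is satisfied (by direct computation of $T(u,y)$ and $T(x,v)$ in the fan), while $xz,yz\in E$ since $z$ is universal in $F_2^5$, violating the conclusion.

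For the forward direction (the axioms hold in any distance-hereditary graph), I argue by contradiction. Suppose the hypothesis of (dh1) holds but $x\notin T(u,v)$. Lemma \ref{toll1} applied to $x\in T(u,y)$ and $y\in T(x,v)$ yields (induced) paths $\beta$ from $x$ to $u$ whose internal vertices avoid $N[y]\setminus\{x\}$ and $\gamma$ from $y$ to $v$ whose internal vertices avoid $N[x]\setminus\{y\}$; both have length at least $2$ since $ux,yv\notin E$. Concatenating $\beta$, the edge $xy$, and $\gamma$ yields a $u,v$-walk through $x$, and the failure $x\notin T(u,v)$ together with the case ($u=v$, $uv\in E$, or Lemma \ref{toll1} applied at $T(u,v)$ in the remaining case) closes this walk into a cycle $C$ of length at least $5$ in the induced subgraph on $V(\beta)\cup V(\gamma)\cup\{u,v\}$. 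The constraints on the interior of $\beta$ and $\gamma$ force the only possible chords of $C$ to lie between internal vertices of $\beta$ and internal vertices of $\gamma$. A case analysis on the length of $C$ and the chord pattern then produces an induced $H$, $C_5$, hole of length $\geq 6$, or domino $D$, contradicting Theorem \ref{disther}. The verification of (dh) is analogous: assuming the hypothesis and additionally $xz,yz\in E$, the induced subgraph on $\{u,v,x,y,z\}$ together with the toll walks witnessing $z\in T(u,y)\cap T(x,v)$ contains an induced $F_2^5$, again contradicting distance-hereditariness.

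The main obstacle is the forward direction for (dh1), specifically the case analysis of chord patterns in $C$. Short cycles ($|C|=5$ with at most one possible chord) produce either an induced $C_5$ (no chord) or an induced house $H$ (one chord across, forming a 4-cycle with a triangle sharing an edge). Cycles of length $6$ with a single chord between the unique interior vertex of the shorter side and one of the interior vertices of the other yield either a domino (chord in the middle position) or a house/$C_5$ (chord shifted off-center). Longer cycles with sparse chord patterns leave an induced hole, while denser chord configurations at matching positions on $\beta,\gamma$ yield an induced domino. The sub-cases $u=v$ (permitted by (dh1)) and $uv\in E$ require the cycle to close differently, but the same chord analysis applies since the neighborhood restrictions on the interiors of $\beta$ and $\gamma$ are unchanged.
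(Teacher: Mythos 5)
Your proposal matches the paper's proof in all essentials: both directions go through the forbidden-subgraph characterization of Theorem~\ref{disther}, with the same witnesses for the necessity part (taking $u=v$ at the apex for the house and $C_5$, and $u$ adjacent to $v$ for longer holes and the domino, which is exactly what makes the violation lift from the induced subgraph to $G$), the same two certificate paths closed into a cycle by a neighbor of $u$ forced by $x\notin T(u,v)$, and the same chord analysis producing a house, hole, or domino. The only spot where you are thinner than the paper is the (dh)/$3$-fan direction: when $ux$ or $yv$ is not an edge the five vertices $u,x,y,v,z$ do not themselves induce a fan, and the paper instead uses the already-established $(H,\mathrm{hole},D)$-freeness to show $z$ is adjacent to every vertex of the certificate paths and finds the fan on $\{x,y,y_1,y_2,z\}$ or $\{x,y,x_1,x_2,z\}$ --- your phrase ``together with the toll walks'' gestures at this but should be made explicit.
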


\begin{proof}
First, we prove that $T$ satisfies Axiom (dh1) if and only if $G$ is ($H$ hole $D$)-free graph. It is clear from Figure~\ref{hcdf} that $T$ does not satisfy Axiom (dh1) on $H$, hole and $D$. Conversely, suppose that $T$ does not satisfy Axiom (dh1) on $G$. That is, $x\in T(u,y)$ $y \in T(x,v)$, $T(x,y)= \{x,y\}$, $x\neq y$, $T(u,x)\neq \{u,x\}$, $T(y,v)\neq \{y,v\}$, and $x\notin T(u,v)$. Since $x\in T(u,y)$ and $T(u,x)\neq \{u,x\}$, there is an induced $u,x$-path, say $P=x_nx_{n-1}\dots x_1x_0$, where $x_0=x$ and $u=x_n$, without a neighbor of $y$ with the exception of $x$. Similar, $y \in T(x,v)$, $T(y,v)\neq \{y,v\}$ produces an induced $y,v$-path, say $Q:y_0y_1\dots y_{n-1}y_n$, where $y_0=y$ and $y_n=v$, without a neighbor of $x$ with the exception of $y$. Also, since $x\notin T(u,v)$, without loss of generality, we may assume by Lemma \ref{toll1} that the path $Q$ contains a neighbor $u'$ of $u$. We may choose $u'$ to be the neighbor of $u$ on $P$ that is closest to $y$. Then the sequence of vertices $u\xrightarrow{P}xy\xrightarrow{Q}u'u$ forms a cycle of at least five lengths. There may be chords from the vertices of $P$ to the vertices of the $y,u'$-subpath of $Q$. But $y $ is not adjacent to any vertex in $P$ and $x$ is not adjacent to any vertex in $Q$. So, some or all vertices in the sequence $u\xrightarrow{P}xy\xrightarrow{Q}u'$ induce a house if $x_1y_1\in E(G)$ and $x_2y_1 \in E(G)$ or $x_1y_2 \in E(G)$, induce a domino if $x_1y_1\in E(G)$ and $x_2y_2\in E(G)$ otherwise induce a hole.

Now we have that $G$ is ($H$ hole $D$)-free if and only if $T$ satisfies Axiom (dh1). Therefore, we have to prove that $G$ is $3$ fan-free if and only if $T$ satisfies Axiom (dh) according to Theorem \ref{disther}. If $G$ contains a $3$-fan with vertices as shown in Figure~\ref{hcdf}, then the toll walk transit function does not satisfy Axiom (dh). Conversely, suppose that $T$ does not satisfy Axiom (dh) on  (house, hole, domino)-free graph $G$. That is  $x,y,z\in T(u,y)\cap T(x,v)$,  $T(x,y)= \{x,y\}$, $x\neq y$, $T(u,z)=\{u,z\}$, $T(z,v)=\{z,v\}$ and $T(x,z) = \{x,z\}$ and $T(y,z)=\{y,z\}$. 
Since $x\in T(u,y)$, there exists an induced $u,x$-path, say $P=x_nx_{n-1}\dots x_1x_0$, where $x_0=x$ and $u=x_n$, which avoids the neighbors of $y$ with the exception of $x$ and since $y\in T(x,v)$ there exists an induced $y,v$-path, say $Q:y_0y_1\dots y_{n-1}y_n$, where $y_0=y$ and $y_n=v$, which avoids the neighbors of $x$ with the exception of $y$. Since $T(u,z)=\{u,z\}$ and $T(z,v)=\{z,v\}$, $z$ does not belong to the paths $P$ and $Q$. Let $R:uzv$ be the $u,v$-induced path containing $z$. If $R(u,x)=\{u,x\}$ and $R(y,v)=\{y,v\}$, then the vertices $u,x,y,v,z$ induce a $3$-fan. If $R(u,x)\neq \{u,x\}$ or $R(y,v)\neq \{y,v\}$, since $G$ is ($H$ hole $D$)-free the vertex $z$ is adjacent to all vertices in the path $P$ and $Q$. Then the vertices, $x,y,y_1,y_2,z$ or $x,y,x_1,x_2,z$, respectively, induce a $3$-fan graph.\qed
\end {proof}

\begin{lemma}\label{easydh}
Let $R$ be a transit function on a non-empty finite set $V$ satisfying the Axioms (J2), (J4), (dh1) and  (TW1'). If $P_n$, $n\geq 2$, is an induced $u,v$-path in $G_R$, then $V(P_n)\subseteq R(u,v)$. Moreover, if $z$ is adjacent to an inner vertex of $P_n$ that is not adjacent to $u$ or to $v$ in $G_R$, then $z\in R(u,v)$. 
\end{lemma}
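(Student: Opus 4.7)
The plan is to mirror the inductive structure of the proof of Lemma~\ref{easy}, with Axiom (dh1) taking over the role played there by Axiom (J4'). Write an induced $u,v$-path as $P_n = u x_1 x_2 \cdots x_{n-2} v$ and induct on $n$. The base cases $n \in \{2,3,4\}$ are handled exactly as in Lemma~\ref{easy}: $n=2$ is the definition of $G_R$; $n=3$ follows from Axiom (J2); for $n=4$ one first applies (J2) to deduce $x_1 \in R(u, x_2)$ and $x_2 \in R(x_1, v)$, then invokes (J4) with $(x,y) = (x_1, x_2)$ to get $x_1, x_2 \in R(u,v)$. The case $n=5$ with $P_5 = u x_1 x_2 x_3 v$ is singled out: (J4) applied with $(x,y) = (x_1, x_3)$ produces $x_1, x_3 \in R(u,v)$, and then Axiom (TW1') in its $z = w = x_2$ special form (which is precisely Axiom (TW1) from the preliminaries) delivers $x_2 \in R(u,v)$.

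For the inductive step with $n \geq 6$, one first applies the induction hypothesis to the two $(n-1)$-vertex subpaths $u x_1 \cdots x_{n-2}$ and $x_1 \cdots x_{n-2} v$ to obtain $x_1 \in R(u, x_{n-2})$ and $x_{n-2} \in R(x_1, v)$; combined with $R(u, x_1) = \{u, x_1\}$ and $R(x_{n-2}, v) = \{x_{n-2}, v\}$, Axiom (J4) then gives $x_1, x_{n-2} \in R(u,v)$. For each interior vertex $x_i$ with $2 \leq i \leq n-4$, one applies Axiom (dh1) with $(x,y) = (x_i, x_{i+1})$: the induction hypothesis on the subpaths $u x_1 \cdots x_{i+1}$ and $x_i \cdots x_{n-2} v$ supplies $x_i \in R(u, x_{i+1})$ and $x_{i+1} \in R(x_i, v)$, while $R(x_i, x_{i+1}) = \{x_i, x_{i+1}\}$, $R(u, x_i) \neq \{u, x_i\}$ (since $i \geq 2$), and $R(x_{i+1}, v) \neq \{x_{i+1}, v\}$ (since $i+1 \leq n-3$); hence $x_i \in R(u,v)$. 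The lone remaining vertex $x_{n-3}$, which exists only because $n \geq 6$ forces $n - 3 \geq 3$, is captured by the symmetric application of (dh1) with $(x,y) = (x_{n-3}, x_{n-4})$ after exchanging the roles of $u$ and $v$ via $R(u,v) = R(v,u)$.

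For the second part of the lemma, suppose $z$ is adjacent to an interior vertex $x_i$ with $2 \leq i \leq n-3$; the first part provides $x_i \in R(u,v)$, and since $R(u, x_i) \neq \{u, x_i\}$, $R(x_i, v) \neq \{x_i, v\}$ and $R(x_i, z) = \{x_i, z\}$, Axiom (TW2) --- the specialization of (TW1') obtained by setting $x = y$ and $z = w$, as noted in the paper --- yields $z \in R(u,v)$. I expect the main obstacle to be the isolated case $n = 5$: in $P_5$ both neighbors $x_1, x_3$ of the middle vertex $x_2$ are already adjacent to an endpoint, so neither can fulfil the non-adjacency hypotheses of (dh1); this forces the detour through the strictly stronger Axiom (TW1') rather than a reliance on (J4) and (dh1) alone.
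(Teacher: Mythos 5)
Your proof is correct and follows essentially the same route as the paper's: induction on $n$, with (J2)/(J4) covering the base cases and the inner vertices adjacent to $u$ or $v$, Axiom (dh1) covering the deep interior vertices, Axiom (TW1') invoked only for the middle vertex of $P_5$, and Axiom (TW2) for the pendant vertex $z$. The only difference is organizational: the paper writes out $n=6$ and $n=7$ as separate cases (still using (TW1') at $n=6$) before the general step, whereas you fold everything from $n\geq 6$ into one uniform inductive step and handle the vertex $x_{n-3}$ by the symmetric application of (dh1) with the roles of $u$ and $v$ exchanged.
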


\begin{proof}
If $n=2$, then $P_2=uv$ and $R(u,v)=\{u,v\}$ by the definition of $G_R$. If $n=3$, then $P_3=uxv$ and $x\in R(u,v)$ by Axiom (J2). Let now $n=4$ and $P_4=uxyv$. By Axiom (J2) we have $x\in R(u,y)$ and $y\in R(x,v)$. Now, Axiom (J4) implies that $x,y\in R(u,v)$. If $n=5$, $P_5=uxx_2yv$ and by the previous step, $x,x_2 \in R(u,y)$ and $x_2,y \in R(x,v)$. Then $x,y\in R(u,v)$ by Axiom (J4) and $x_2\in R(u,v)$ by Axiom (TW1'). If $n=6$ and $P_6=uxx_2x_3yv$, then by case $n=5$ we have $\{x,x_2,x_3 \} \in R(u,y)$ and $\{x_2,x_3,y \}\in R(x,v)$. By Axiom (J4) $x,y\in R(u,v)$ and by Axiom (TW1'), $x_2, x_3 \in R(u,v)$. For $n=7$ and $P_7=uxx_2x_3x_4yv$ we have $x_2\in R(u,x_3)$ and $x_3 \in R(x_2, v)$ by the previous cases, $R(u,x_2) \neq \{u,x_2\}$, $R(x_2,x_3) = \{x_2,x_3\}$, $R(x_3,v) \neq \{x_3,v\}$ and $x_2, x_3 \in R(u,v)$ follow ba Axiom (dh1). By the same argument we have $x_3, x_4 \in R(u,v)$. By Axiom (J4) we have $x,y\in R(u,v)$, since $x\in R(u,y)$ and $y\in R(x,v)$.
For a longer path $P_n=uxx_2\dots x_{n-2}yv$, $n>7$, we continue by induction. By the induction hypothesis we have $\{u,x,x_2,\dots,x_{n-2},y\}\subseteq R(u,y)$ and $\{x,x_3,\dots,x_{n-2},y,v\}\subseteq R(x,v)$. In particular, $x_i\in R(u,x_{i+1})$ and $x_{i+1}\in R(x_i,v)$ for every $i\in \{2,\dots,n-2\}$. By Axiom (dh1) we get $x_i,x_{i+1}\in R(u,v)$ for every $i\in\{2,\dots,n-2\}$ and by Axiom (J4) we have $x,y \in R(u,v)$.

For the second part, let $z$ be a neighbor of $x_i$, $i\in\{2,\dots,n-2\}$ that is not adjacent to $u$ and $v$. Clearly, in this case, $n\geq 5$. By the first part of the proof, we have $x_i\in R(u,v)$ and we have $z\in R(u,v)$ by Axiom (TW2).\qed
\end{proof}

\begin{proposition}\label{propdh}
If $T$ is a toll walk transit function on a distance-hereditary graph $G$, then $T$ satisfies Axioms (b2) and (TWC) on $G$.
\end{proposition}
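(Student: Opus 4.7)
The strategy for both axioms is to mimic the chordal arguments of Propositions~\ref{prop1} and \ref{prop2}, but to replace the single ``induced cycle of length $\geq 4$'' obstruction with one of the four forbidden induced subgraphs for distance-hereditary graphs supplied by Theorem~\ref{disther}: a house, a hole, a domino, or the $3$-fan $F_2^5$.

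For Axiom (b2), assume $x\in T(u,v)$, $y\in T(u,x)$, and $y\notin T(u,v)$, and introduce from Lemma~\ref{toll1} the four induced paths $P,Q,R,S$ used in Proposition~\ref{prop2} (namely $P$ an induced $x,v$-path avoiding $N[u]-\{x\}$, $Q$ an induced $u,x$-path avoiding $N[v]-\{x\}$, $R$ an induced $u,y$-path avoiding $N[x]-\{y\}$, and $S$ an induced $y,x$-path avoiding $N[u]-\{y\}$). Since $u\not\sim x$ (forced by $y\in T(u,x)$ with $y\notin\{u,x\}$), the walk $y\xrightarrow{S}x\xrightarrow{P}v$ avoids $N[u]-\{y\}$, so by Lemma~\ref{toll1} the set $N[v]-\{y\}$ must separate $u$ from $y$; hence $R$ contains a neighbor of $v$, and I let $v'$ be the one closest to $y$ on $R$. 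Where Proposition~\ref{prop2} concludes by invoking Axiom (JC), I extract an induced cycle from the closed walk $v'\xrightarrow{R}y\xrightarrow{S}x\xrightarrow{P}v\,v'$: a length-$\geq 5$ cycle is a hole, which gives the contradiction immediately, while a length-$4$ cycle must be augmented via the remaining vertices and chords along the tails of $P$, $R$, and $S$. A finite case analysis of the resulting chord patterns always exhibits a hole, a house, a domino, or an $F_2^5$, contradicting Theorem~\ref{disther}.

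For Axiom (TWC), given $x\in T(u,v)$ with $u,v,x$ distinct, pick any induced $x,v$-path $P=x\,v_1\,p_2\cdots p_k=v$ avoiding $N[u]-\{x\}$ (which exists by Lemma~\ref{toll1}) and take $v_1$ as the second vertex of $P$. The conditions $v_1\in T(x,v)$, $v_1\neq x$, $T(x,v_1)=\{x,v_1\}$, and $T(u,v_1)\neq\{u,v_1\}$ follow directly from the construction. For $k\leq 2$ we have $v_1\in N[v]$, so $T(v_1,v)\subseteq\{v_1,v\}$ and $x\notin T(v_1,v)$ is trivial. For $k\geq 3$ I argue by contradiction: if $x\in T(v_1,v)$ then Lemma~\ref{toll1} supplies a second induced $x,v$-path $Q$ avoiding $N[v_1]-\{x\}$, and since every induced path in $G$ is a shortest path, $|Q|=d(x,v)=|P|$. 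Letting $x_1$ be the neighbor of $x$ on $Q$ and $v_2=p_2$, the subcase $x_1\not\sim v_2$ yields, exactly as in Proposition~\ref{prop1}, an induced cycle of length at least six -- a hole, contradiction -- while the subcase $x_1\sim v_2$ produces an induced $C_4$ on $\{x,v_1,v_2,x_1\}$ which is permitted in distance-hereditary graphs, so I continue one step further along $P$ and $Q$ and split on the possible chords between $\{v_2,p_3\}$ and $\{x_1,w_2\}$: each configuration produces either a second $C_4$ sharing the edge $v_2 x_1$ (a domino) or an extra vertex adjacent to two consecutive vertices of the first $C_4$ (a house), again contradicting Theorem~\ref{disther}.

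The genuinely new step -- and the main obstacle -- is the induced $C_4$ case in both parts. Because a distance-hereditary graph is permitted to contain induced $C_4$s, the contradiction can no longer be read off directly from a single $4$-cycle as in the chordal arguments; instead one must push one edge further along the underlying induced paths and combine consecutively produced $C_4$s, exhausting the finitely many chord patterns so that each configuration gives rise to one of the four forbidden induced subgraphs of Theorem~\ref{disther}.
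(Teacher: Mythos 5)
Your proposal follows essentially the same route as the paper: for both axioms it uses Lemma~\ref{toll1} to extract the relevant induced paths, assembles a cycle from them (for (b2) the cycle through $v'$, $y$, $x$, $v$; for (TWC) the cycle containing the edge $xv_1$ and parts of $P$ and $Q$), and then appeals to the forbidden-subgraph characterization of Theorem~\ref{disther} to force a house, hole, domino, or $3$-fan. The paper's own proof leaves the chord case analysis at exactly the same sketch level as yours (it simply asserts that the chords must induce one of the four forbidden subgraphs), so your plan is a faithful, if anything slightly more explicit, version of the published argument.
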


\begin{proof}
If $T$ does not satisfy Axiom (b2), then there exist $u,v,x,y$ such that $x\in T(u,v)$, $y\in T(u,x)$ and $y\notin T(u,v)$. Since $x\in T(u,v)$, there exists an induced $x,v$-path, say $P$, without a neighbor of $u$ (except possibly $x$) and an induced $x,u$-path, say $Q$, without a neighbor of $v$ (except possibly $x$). Similarly, since $y\in T(u,x)$, there exists an induced $u,y$ path, say $R$, without a neighbor of $x$ (except possibly $y$) and an induced $y,x$ path, say $S$, without a neighbor of $u$ (except possibly $y$). Since $y\notin T(u,v)$, a neighbor of $u$ separates $y$ from $v$ or a neighbor of $v$ separates $y$ from $u$ by Lemma \ref{toll1}. But $y\xrightarrow{S} x \xrightarrow{P} v$ is a $y,v$-path that does not contain a neighbor of $u$. So, the only possibility is that a neighbor of $v$ separates $y$ from $u$. Therefore $R$ contains a neighbor of $v$, say $v'$, which is closest to $y$. If $v_1$ lies on both $R$ and $S$, then $S$ contains at least one additional vertex between $v_1$ and $x$. The vertices,  $u\xrightarrow{R}y\xrightarrow{S}x\xrightarrow{Q}u$ contain a cycle of length at least five. There may be chords from the vertices of $Q$ to both the paths, $R$ and $S$  and also from the vertices of $R$ to the vertices of $S$. Hence, some or all vertices in this sequence will induce a hole, house, domino, or fan graphs so that $T$ satisfies Axiom (b2) on distance-hereditary graphs.  

For Axiom (TWC) let $x\in T(u,v)$. There exists an induced $x,v$-path $P$ that avoids the neighborhood of $u$ (except possibly $x$). For the neighbor $v_1$ of $x$ on $P$ it follows that $v_1 \in T(x,v), v_1 \neq x$ with $T(x,v_1) = \{x,v_1\}$ and $T(u,v_1) \neq \{u,v_1\}$. If $v_1=v$, then clearly $x\notin T(v,v_1)=\{v\}$. Similarly, if $T(v_1, v)=\{v_1, v\}$, then $x\notin T(v_1,v)$. Consider next $T(v_1, v)\neq \{v_1, v\}$. We will show that $x\notin T(v_1,v)$ for a distance hereditary graph $G$. To avoid a contradiction, assume that $x\in T(v_1,v)$. There exists an induced $x,v$-path $Q$ that avoids the neighborhood of $v_1$.  The edge $xv_1$ together with some vertices of $P$ and $Q$ will form a cycle of length at least five. Also, there may be chords from vertices in $P$ to $Q$ so that these vertices may induce a hole, house, domino or a $3$-fan, a contradiction to Theorem \ref{disther}. So $x\notin T(v_1,v)$ and Axiom (TWC) hold. \qed
\end{proof}

Using Lemma~\ref{easydh}, we can modify Theorem~\ref{chord} stated as the next theorem. For this, notice that Axiom (JC) is replaced by Axioms (J4) (when $uxyv$ is a path) and (dh1) otherwise and Axioms (TW1) and (TW2) are replaced by stronger Axioms (TW1').

\begin{theorem} \label{dh1}
If $R$ is a transit function on a non-empty finite set $V$ that satisfies Axioms  (b2),  (J2), (J4), (dh1)  (TW1') and (TWC), then $R=T$ on $G_R$. 
\end{theorem}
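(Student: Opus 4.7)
The plan is to mirror the two-direction structure of the proof of Theorem~\ref{chord}, substituting Lemma~\ref{easydh} for Lemma~\ref{easy1} and leveraging the stronger Axiom (TW1') (which yields both (TW1) and (TW2)) together with (J4) and (dh1) in place of the single axiom (JC).

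For the forward inclusion $R(u,v)\subseteq T(u,v)$ on $G_R$, I would argue essentially as in Theorem~\ref{chord}. If $x\in\{u,v\}$ or $R(u,x)=\{u,x\}=\{x,v\}=R(x,v)$ the conclusion is immediate, so assume $R(x,v)\neq\{x,v\}$. Starting from $v_0=x$, iterated application of Axiom (TWC) together with (b2) produces a sequence $v_0,v_1,\dots,v_q$ of pairwise adjacent vertices of $G_R$ satisfying $v_{i+1}\in R(u,v)$, $R(u,v_{i+1})\neq\{u,v_{i+1}\}$, and $R(v_{i+1},v)\subset R(v_i,v)$; the last strict inclusion uses the clause $v_i\notin R(v_{i+1},v)$ of (TWC). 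Finiteness of $V$ forces termination at $v_q=v$, and a symmetric construction on the $u$-side, if needed, yields after concatenation a toll $u,v$-walk in $G_R$ through $x$.

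For the converse $T(u,v)\subseteq R(u,v)$, I would take a toll walk $W$ through $x$, extract an induced $u,v$-subpath $Q$ of $W$, and apply Lemma~\ref{easydh} to conclude $V(Q)\subseteq R(u,v)$. If $x\in Q$ we are finished, so suppose not and let $x_1x_2\cdots x_\ell$ be the sub-walk of $W$ whose only vertex on $Q$ is $x_1$ and with $x_\ell=x$. Since $W$ is a toll walk, it is impossible that both $R(u,x_1)=\{u,x_1\}$ and $R(x_1,v)=\{x_1,v\}$, so up to swapping the roles of $u$ and $v$ we may assume $R(x_1,v)\neq\{x_1,v\}$. In the easy sub-case where also $R(u,x_1)\neq\{u,x_1\}$, iterated use of the second clause of Lemma~\ref{easydh} (which invokes (TW2), a consequence of (TW1')) places every $x_i$, in particular $x$, into $R(u,v)$.

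The hard part will be the remaining sub-case $R(u,x_1)=\{u,x_1\}$. Lemma~\ref{toll1} supplies an induced $x,v$-path $S=s_0\cdots s_k$ with $s_0=x$, $s_k=v$ and no $s_i$ adjacent to $u$; let $s_j$ be the first vertex of $S$ on $Q$ and $b$ the neighbour of $x_1$ on $Q$ distinct from $u$. In the chordal proof of Theorem~\ref{chord}, $C_n$-freeness immediately forces $s_j=b$ together with $s_{j-1}b\in E(G_R)$ or $bx_2\in E(G_R)$; since $G_R$ is not a priori chordal under our weaker axiom set, I would instead analyse the cycle $u\xrightarrow{Q}x_1\to x\xrightarrow{S}s_j\xrightarrow{Q}u$ and its chords, isolate the closest pair of $Q$- and $S$-vertices joined by a chord, and apply Axiom (TW1') when the resulting bridge between two $R(u,v)$-vertices is a three-edge path, and Axiom (dh1) when it reduces to the configuration $a\in R(u,b)$, $b\in R(a,v)$ across a single edge $ab$. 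Once a first witness ($s_{j-1}$, $s_{j-2}$, or some $x_i$) has been placed in $R(u,v)$, iterated (TW2) along the remaining inner vertices transports membership down to $s_0=x$. The delicate technical point is verifying that the cycle structure always permits one of (TW1') or (dh1) to fire at the initial bridging step, which is where the bulk of the case analysis will concentrate.
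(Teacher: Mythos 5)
Your proposal follows essentially the same route as the paper, which in fact gives no detailed proof of Theorem~\ref{dh1} at all but merely remarks that the argument of Theorem~\ref{chord} goes through once Lemma~\ref{easy1} is replaced by Lemma~\ref{easydh}, Axiom (JC) by Axioms (J4) and (dh1), and Axioms (TW1)/(TW2) by the stronger (TW1'). Your expansion of the two inclusions, and in particular your observation that the hard sub-case $R(u,x_1)=\{u,x_1\}$ can no longer lean on chordality of $G_R$ and so requires a separate chord analysis on the cycle formed by $Q$ and $S$, is more explicit than anything the paper provides and correctly pinpoints where the remaining work lies.
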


Hence, we obtain a characterization of toll walk transit function on distance-hereditary graphs as follows. The proof follows directly by Theorems \ref{dh} and \ref{dh1}, Propositions \ref{ba} and \ref{propdh} and since Axioms (J2) and (TW1') always hold for the toll walk transit function $T$. 

\begin{theorem}\label{dh2}
A transit function $R$ on a finite set $V$ satisfies Axioms (b2), (J2), (J4), (dh), (dh1) (TW1') and (TWC) if and only if $G_R$ is a distance-hereditary graph and $R=T$ on $G_R$.
\end{theorem}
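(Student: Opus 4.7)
The plan is to prove the stated equivalence by splitting it into the two implications and assembling already-established results from this section and the preceding ones; no new combinatorial argument is required.

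For the forward direction, I would assume $G_R$ is distance-hereditary and $R=T$ is the toll walk transit function on $G_R$, and verify each of the seven axioms in turn. The fact that any toll walk transit function satisfies Axioms (J2) and (TW1') has been used repeatedly above, so these two are automatic. Axiom (J4) holds for $T$ on an arbitrary graph by Proposition~\ref{ba}. Because $G_R$ is distance-hereditary, Theorem~\ref{dh} delivers Axioms (dh) and (dh1), and Proposition~\ref{propdh} delivers Axioms (b2) and (TWC). This exhausts the list.

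For the reverse direction, I would assume $R$ is a transit function on $V$ satisfying all seven axioms. The main input is Theorem~\ref{dh1}, which asserts that the sub-list (b2), (J2), (J4), (dh1), (TW1') and (TWC) already forces $R=T$ on $G_R$; note that Axiom (dh) is not needed in this step. Now that $R=T$ has been identified, Axioms (dh) and (dh1) may be read as properties of the toll walk transit function of $G_R$, and Theorem~\ref{dh} then forces $G_R$ to be distance-hereditary.

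There is essentially no obstacle in this proof: the hard work has been done in Theorems~\ref{dh} and \ref{dh1} and Propositions~\ref{ba} and \ref{propdh}. The only point to be careful about is the order of the implications in the reverse direction, namely first using Theorem~\ref{dh1} to obtain $R=T$ and only then using Theorem~\ref{dh} to pin down $G_R$; the opposite order would be circular, since Theorem~\ref{dh} is a statement about the toll walk transit function and not about an arbitrary transit function $R$.
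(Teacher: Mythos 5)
Your proposal is correct and assembles exactly the same ingredients the paper uses: Theorem~\ref{dh1} for $R=T$, Theorem~\ref{dh} for the distance-hereditary characterization via (dh) and (dh1), Proposition~\ref{ba} for (J4), Proposition~\ref{propdh} for (b2) and (TWC), and the universal validity of (J2) and (TW1') for $T$. The paper states this derivation only in one sentence, so your more explicit write-up (including the remark on the order of steps in the reverse direction) is a faithful elaboration of the same argument.
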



\section{Non-definability of the toll walk transit function}

Here we show that it is not possible to give a characterization of the toll walk transit function $T$ of a connected graph using a set of first-order axioms defined on $R$ as we have done in the previous sections for AT-free graphs, Ptolemaic graphs, distance hereditary graphs, chordal graphs and interval graphs in \cite{lcp}. In \cite{ne-j}, Nebesky has proved that a first order axiomatic characterization of the induced path function of an arbitrary connected graph is impossible. The idea of proof of the impossibility of such a characterization is the following.\\ 
First, we construct two non-isomorphic graphs $G_d$ and $G'_{d}$ and a first-order axiom which may not be satisfied by the toll walk transit function $T$ of an arbitrary connected graph. The following axiom is defined for an arbitrary transit function $R$ on a non-empty finite set $V$ and is called the \emph{scant property} following Nebesky \cite{ne-j}.\\

\noindent\textbf{Axiom (SP).} If $R(x,y)\neq \{x,y\}$, then $R(x,y)= V$ for any $x,y\in V$.\medskip

\noindent In our case the toll walk transit function $T$ will satisfy this first order axiom on $G_d$ but not on $G'_{d}$. Then we prove by the famous $EF$ game technique of first-order nondefinability that there exists a partial isomorphism between $G_d$ and $G_{d'}$. First, we define certain concepts and terminology of first-order logic \cite{Lib}. 

The tuple $\textbf{X}=(X,\mathcal{S})$ is called a \textit{structure} when $X$ is a nonempty set called \textit{universe} and $\mathcal{S}$ is a finite set of function symbols, relation symbols, and constant symbols called \textit{signature}. Here, we assume that the signature contains only the relation symbol. The \textit{quantifier rank} of a formula $\phi$ is its depth of quantifier nesting and is denoted by $qr(\phi).$ Let $\textbf{A}$ and $\textbf{B}$ be two structures with same signatures. A map $q$ is said to be a \textit{partial isomorphism} from $\textbf{A}$ to $\textbf{B}$ if and only if $dom(q)\subset A$, $rg(q)\subset B$, $q$ is injective and for any $n$-ary relation $R$ in the signature and $a_0$, $\dots$,  $a_{l-1}$ $\in dom(q)$,
 $R^\mathcal{A}( a_0, \dots,  a_{l-1} )\,\,  \text{if and only if}\,\,  R^\mathcal{B}( q{(a_0)}, \dots,  q{(a_{l-1})} ).$ 


Let $r$ be a positive integer. The \textit{$r$-move Ehrenfeucht-Fraisse Game} on $\textbf{A}$ and $\textbf{B}$ is played between 2 players called the \textit{Spoiler} and the \textit{Duplicator}, according to the following rules.

Each run of the game has $r$ moves. In each move, Spoiler plays first and picks an element from the universe $A$ of the structure $\textbf{A}$ or from the universe $B$ of the structure $\textbf{B}$; Duplicator then responds by picking an element from the universe of the other structure. Let $a_i \in A$ and $b_i \in B$ be the two elements picked by the Spoiler and Duplicator in their $i$th move, $1\leq i \leq r$. Duplicator wins the run $(a_1,b_1),\dots,(a_r,b_r)$ if the mapping $a_i \to b_i$, where $1 \leq i \leq r$ is a partial isomorphism from the structure $\textbf{A}$ to $\textbf{B}$. Otherwise, Spoiler wins the run $(a_1,b_1),\dots,(a_r,b_r)$.

\textit{Duplicator wins the $r$-move EF-game on $\textbf{A}$ and $\textbf{B}$} or \textit{Duplicator has a winning strategy for the EF-game on $\textbf{A}$ and $\textbf{B}$} if Duplicator can win every run of the game, no matter how Spoiler plays.  

 The following theorems are our main tool in proving the inexpressibility results.

\begin{theorem}\cite{Lib}\label{ef}
The following statements are equivalent for two structures $\textbf{A}$ and $\textbf{B}$ in a relational vocabulary. 
\begin{enumerate}
\item  $\textbf{A}$ and $\textbf{B}$ satisfy the same sentence $\sigma$ with $qr(\sigma) \leq n$.
\item The Duplicator has an $n$-round winning strategy in the EF game on $\textbf{A}$ and $\textbf{B}$.
\end{enumerate}
\end{theorem}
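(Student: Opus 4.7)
The plan is to prove the equivalence of the two statements by simultaneous induction on $n$, where I strengthen the statement to handle $k$-tuples of distinguished elements: for every $n$ and every $k$, the structures with tuples $(\textbf{A},\bar{a})$ and $(\textbf{B},\bar{b})$ satisfy the same formulas of quantifier rank at most $n$ in free variables $x_1,\dots,x_k$ if and only if the Duplicator wins the $n$-move EF game starting from the position $(\bar{a},\bar{b})$. The original statement is the special case $k=0$.

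The essential technical ingredient will be a finiteness lemma: for a finite relational vocabulary, fixed $k$, and fixed $n$, there are only finitely many formulas $\varphi(x_1,\dots,x_k)$ of quantifier rank at most $n$ up to logical equivalence. From this I will extract, for each $(\textbf{A},\bar{a})$, a single \emph{$n$-Hintikka formula} $\theta^n_{\bar{a}}(x_1,\dots,x_k)$ which is the conjunction (after collapsing equivalences) of all qr $\leq n$ formulas true of $(\textbf{A},\bar{a})$; two tuples satisfy the same qr $\leq n$ formulas if and only if they satisfy the same $\theta^n$. This normal form is what allows the induction to go through cleanly.

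For the base case $n=0$, quantifier-free formulas are Boolean combinations of atomic formulas, so $(\textbf{A},\bar{a})$ and $(\textbf{B},\bar{b})$ agree on them exactly when $\bar{a}\mapsto\bar{b}$ is a partial isomorphism, which is precisely the $0$-move Duplicator-win condition. For the inductive step $n\to n+1$, the direction from Duplicator winning to elementary equivalence is handled by pushing a witness through the game: given a qr $\leq n+1$ sentence, which may be taken in prenex form $\exists x\,\psi(x)$, a witness $a$ in $\textbf{A}$ played by the Spoiler is answered by some $b$ in $\textbf{B}$, after which the induction hypothesis applied to the remaining $n$-round game yields $\textbf{B}\models\psi(b)$. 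Conversely, assuming elementary equivalence at level $n+1$, I will define the Duplicator's strategy by answering any Spoiler move $a\in A$ with a $b\in B$ such that $(\textbf{A},a)\equiv_n(\textbf{B},b)$; the existence of such $b$ is forced by applying the level-$(n+1)$ hypothesis to the sentence $\exists x\,\theta^n_a(x)$, whose qr is at most $n+1$. The induction hypothesis then finishes the remaining $n$ rounds.

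The main obstacle is the finiteness of Hintikka normal forms up to logical equivalence, which is what makes $\exists x\,\theta^n_a(x)$ a genuine first-order sentence rather than an infinitary schema. This requires a careful syntactic bookkeeping argument, proved by induction on $n$, over the finitely many atomic formulas available in $k+1$ variables in the given relational signature. Once this lemma is in hand, the two implications of the equivalence unwind mechanically from the inductive hypothesis applied to the residual game after a single move. I will also need to observe that the construction respects the absence of function and constant symbols assumed in the paper, which simplifies the atomic level and avoids the technicalities of handling terms.
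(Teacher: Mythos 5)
The paper does not prove this statement: it is quoted from Libkin's textbook \cite{Lib} as a known tool, so there is no internal proof to compare against. Your outline reconstructs the standard proof---essentially the one in the cited source---correctly: the strengthening to $k$-tuples of parameters, the finiteness of formulas of quantifier rank at most $n$ in $k$ free variables up to logical equivalence over a finite relational vocabulary, the rank-$n$ Hintikka formulas $\theta^n_{\bar a}$, the base case via partial isomorphisms on atomic formulas, and the converse direction via the sentence $\exists x\,\theta^n_a(x)$ of quantifier rank at most $n+1$ are exactly the right ingredients, and the induction closes as you describe.

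One step as written would fail and needs the standard repair. In the direction from a Duplicator win to agreement on sentences, you propose to take a quantifier-rank-$(n+1)$ sentence ``in prenex form $\exists x\,\psi(x)$.'' Prenexification does not preserve quantifier rank (for instance $(\exists x\,P(x))\wedge(\exists y\,Q(y))$ has rank $1$ while its prenex form has rank $2$), so you cannot assume this normal form without losing the rank bound that drives the induction. The fix is routine: argue by structural induction on the sentence, noting that agreement between $\textbf{A}$ and $\textbf{B}$ is preserved under Boolean connectives, so the only case requiring the game is a subformula whose outermost symbol is a quantifier; there you strip the single outermost $\exists x$, let the Spoiler play the witness, and apply the induction hypothesis to the residual $n$-round game exactly as you describe. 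With that adjustment the argument is complete.
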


\begin{theorem}\cite{Lib}\label{ef2}
A property \textsc{P} is expressible in first order logic if and only if there exists a number $k$ such that for every two structures $\textbf{X}$  and $\textbf{Y}$, if $\textbf{X} \in \textsc{P}$ and Duplicator has a $k$-round winning strategy on $\textbf{X}$ and  $\textbf{Y} $ then $\textbf{Y} \in \textsc{P}$.
\end{theorem}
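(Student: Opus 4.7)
The plan is to prove both implications using Theorem \ref{ef} as the main bridge between first-order equivalence and EF-game wins. The forward implication is almost immediate: I would assume $\textsc{P}$ is expressible by a first-order sentence $\phi$ and take $k := qr(\phi)$. If $\textbf{X} \in \textsc{P}$ then $\textbf{X} \models \phi$; whenever Duplicator wins the $k$-round EF-game on $\textbf{X}$ and $\textbf{Y}$, Theorem \ref{ef} forces $\textbf{X}$ and $\textbf{Y}$ to agree on every first-order sentence of quantifier rank at most $k$, and in particular $\textbf{Y} \models \phi$, hence $\textbf{Y} \in \textsc{P}$.

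For the backward implication, the idea is to exhibit an explicit first-order sentence defining $\textsc{P}$, built as a finite disjunction of so-called $k$-types. The key ingredient is the standard finiteness lemma: over a finite relational signature there are, up to logical equivalence, only finitely many first-order formulas of quantifier rank at most $k$ in any fixed finite number of free variables. I would prove this by simultaneous induction on $k$, using that every formula of quantifier rank $k+1$ is a Boolean combination of formulas of rank at most $k$ and their single-variable existential extensions, while at $k=0$ one has only Boolean combinations of a finite set of atomic formulas.

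Given the finiteness lemma, for each structure $\textbf{X}$ let $\tau_k^{\textbf{X}}$ denote the conjunction of all sentences of quantifier rank at most $k$ that are true in $\textbf{X}$; this collapses to a single first-order sentence of rank at most $k$, and by Theorem \ref{ef} one has $\textbf{Y} \models \tau_k^{\textbf{X}}$ if and only if Duplicator wins the $k$-round EF-game on $\textbf{X}$ and $\textbf{Y}$. The hypothesis of the theorem then says precisely that for every $\textbf{X} \in \textsc{P}$ the sentence $\tau_k^{\textbf{X}}$ defines a subclass of $\textsc{P}$. Since only finitely many distinct $\tau_k^{\textbf{X}}$ can arise overall, the disjunction $\Phi := \bigvee_{\textbf{X} \in \textsc{P}} \tau_k^{\textbf{X}}$ reduces to a finite first-order sentence that defines $\textsc{P}$ exactly. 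The main obstacle is the finiteness lemma for equivalence classes of bounded-rank formulas; once that is in hand, the rest assembles from Theorem \ref{ef} without further difficulty.
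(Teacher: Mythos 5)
The paper does not prove this statement at all---it is imported from Libkin's textbook \cite{Lib} as a black box. Your argument is correct and is essentially the standard proof given in that source: the forward direction via Theorem~\ref{ef} with $k=qr(\phi)$, and the backward direction via the finiteness (up to logical equivalence) of rank-$k$ sentences, the rank-$k$ types $\tau_k^{\textbf{X}}$, and the finite disjunction over $\textbf{X}\in\textsc{P}$.
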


By a \textit{ternary structure}, we mean an ordered pair $(X, D)$ where $X$ is a finite nonempty set and $D$ is a ternary relation on $X$. So $D$ is a set of triples $(x,y,z)$ for some $x,y,z\in X$. We simply write $D(x,y,z)$ when $(x,y,z)\in D$. Let $F:X\times X\to 2^X$ be defined as $F (x,y) = \{u\in X: D (x, u, y)\}$. So, for any ternary structure $(X,D)$, we can associate the function $F$ corresponding to $D$ and vice versa. If a ternary relation $D$ on $X$ satisfies the following three conditions for all $u,v,x\in X$ 
 \begin{itemize}
	 \item[$(i)$]  $D(u,u,v)$;
	 \item[$(ii)$]  $D(u,x,v) \implies D(v,x,u)$;
	 \item[$(iii)$]  $D(u,x,u) \implies x=u$,
 \end{itemize}
then the function $F$ corresponding to $D$ will be the transit function.
Observe that every axiom used in Sections 2-5 have a respective representation in terms of a ternary relation.

By the \textit{underlying graph} of a ternary structure $(X,D)$ we mean the graph $G$ with the properties that $X$ is its vertex set and distinct vertices $u$ and $v$ of $G$ are adjacent if and only if 
$$ \{x \in X:D(u,x,v)\} \cup \{x \in X:D(v,x,u)\} =\{u,v\}.$$
We call a ternary structure $(X,D)$, `the \textit{$W$ structure} of a graph $G$, if $X$ is the vertex set of $G$ and $D$ is the ternary relation corresponding to the toll walk transit function $T$ (that is $(x,y,z) \in D$  if and only if $y$ lies in some $x,z$-toll walk).  Obviously,  if $(X, D)$ is a $W$-structure, then it is the $W$-structure of the underlying graph of $(X, D)$.  We say that $(X,D)$ is \textit{scant} if the function $F$ corresponding to the ternary relation $D$, satisfies Axiom (SP) and $F$ is a transit function. 

We present two graphs $G_d$ and $G'_d$ such that the $W$-structure of one of them is scant and the other is not.  Moreover, the proof will settle, once we prove that Duplicator wins the EF game on $G_d$ and $G'_d$.
			
For $d \geq 2$ let $G_d$ be a graph with vertices and edges (indices are via modulo $4d$) as follows:
$$V(G_d)=\{u_1,u_2,\ldots,u_{4d},v_1,v_2,\ldots,v_{4d},x\} \text{ and}$$ 
$$E(G_d)=\{u_iu_{i+1},v_iv_{i+1}, u_iv_i,v_{1}x,v_{2d+1}x:i\in[4d]\}.$$ 

\begin{figure}[ht!]
\begin{center}
\begin{tikzpicture}[scale=0.8,style=thick,x=1cm,y=1cm]
\def\vr{3pt} 

\path (0,0) coordinate (a1);
\path (2,0) coordinate (a2);
\path (3.5,1.5) coordinate (a3);
\path (3.5,3.5) coordinate (a4);
\path (2,5) coordinate (a5);
\path (0,5) coordinate (a6);
\path (-1.5,3.5) coordinate (a7);
\path (-1.5,1.5) coordinate (a8);
\path (0.5,0.5) coordinate (b1);
\path (1.5,0.5) coordinate (b2);
\path (3,2) coordinate (b3);
\path (3,3) coordinate (b4);
\path (1.5,4.5) coordinate (b5);
\path (0.5,4.5) coordinate (b6);
\path (-1,3) coordinate (b7);
\path (-1,2) coordinate (b8);
\path (1,2.5) coordinate (x);
\draw (a7) -- (a8) -- (a1) -- (a2) -- (a3) -- (b3) -- (b2) -- (a2);
\draw (b2) -- (b1) -- (a1);
\draw (b1) -- (b8) -- (a8);
\draw (b8) -- (b7) -- (a7);
\draw (b4) -- (b5) -- (b6) -- (a6) -- (a5) -- (a4) -- (b4);
\draw (b5) -- (a5);
\draw (b1) -- (x) -- (b5);
\draw [dashed] (a3) -- (a4);
\draw [dashed] (a6) -- (a7);
\draw [dashed] (b3) -- (b4);
\draw [dashed] (b6) -- (b7);

\draw (a1) [fill=white] circle (\vr);
\draw (a2) [fill=white] circle (\vr);
\draw (a3) [fill=white] circle (\vr);
\draw (a4) [fill=white] circle (\vr);
\draw (a5) [fill=white] circle (\vr);
\draw (a6) [fill=white] circle (\vr);
\draw (a7) [fill=white] circle (\vr);
\draw (a8) [fill=white] circle (\vr);
\draw (b1) [fill=white] circle (\vr);
\draw (b2) [fill=white] circle (\vr);
\draw (b3) [fill=white] circle (\vr);
\draw (b4) [fill=white] circle (\vr);
\draw (b5) [fill=white] circle (\vr);
\draw (b6) [fill=white] circle (\vr);
\draw (b7) [fill=white] circle (\vr);
\draw (b8) [fill=white] circle (\vr);
\draw (x) [fill=white] circle (\vr);

\draw[anchor = north] (a1) node {$u_1$};
\draw[anchor = north] (a2) node {$u_2$};
\draw[anchor = west] (a3) node {$u_3$};
\draw[anchor = west] (a4) node {$u_{2d}$};
\draw[anchor = south] (a5) node {$u_{2d+1}$};
\draw[anchor = south] (a6) node {$u_{2d+2}$};
\draw[anchor = east] (a7) node {$u_{4d-1}$};
\draw[anchor = east] (a8) node {$u_{4d}$};
\draw[anchor = south] (b1) node {$v_1$};
\draw[anchor = south] (b2) node {$v_2$};
\draw[anchor = east] (b3) node {$v_3$};
\draw[anchor = east] (b4) node {$v_{2d}$};
\draw[anchor = north] (b5) node {$v_{2d+1}$};
\draw[anchor = north] (b6) node {$v_{2d+2}$};
\draw[anchor = west] (b7) node {$v_{4d-1}$};
\draw[anchor = west] (b8) node {$v_{4d}$};
\draw[anchor = west] (x) node {$x$};

\end{tikzpicture}
\end{center}
\caption{Graph $G_d$, $d\geq 2$.} \label{G2}
\end{figure}

For $d \geq 2$ let $G'_d$ be a graph with vertices and edges as follows:
$$V(G_d)=\{u'_1,u'_2,\ldots,u'_{4d},v'_1,v'_2,\ldots,v'_{4d},x'\} \text{ and}$$
$$E(G'_d)=\{u'_1u'_{2d},u'_iu'_{i+1},u'_{2d+1}u'_{4d},u'_{2d+i}u'_{2d+i+1},v'_1v'_{2d},v'_iv'_{i+1}, v'_{2d+1}v'_{4d},$$ $$v'_{2d+i}v'_{2d+i+1},u'_jv'_j,v'_{1}x',v'_{2d+1}x':i\in[2d-1],j\in[4d] \}.$$ 

\begin{figure}[ht!]
\begin{center}
\begin{tikzpicture}[scale=0.8,style=thick,x=1cm,y=1cm]
\def\vr{3pt} 

\path (8,0) coordinate (a1);
\path (5,0) coordinate (a2);
\path (5,3) coordinate (a3);
\path (8,3) coordinate (a4);
\path (11,0) coordinate (a5);
\path (14,0) coordinate (a6);
\path (14,3) coordinate (a7);
\path (11,3) coordinate (a8);
\path (7,1) coordinate (b1);
\path (6,1) coordinate (b2);
\path (6,2) coordinate (b3);
\path (7,2) coordinate (b4);
\path (12,1) coordinate (b5);
\path (13,1) coordinate (b6);
\path (13,2) coordinate (b7);
\path (12,2) coordinate (b8);
\path (9.5,1) coordinate (x);
\draw (a1) -- (a2) -- (b2) -- (b1) -- (a1);
\draw (b2) -- (b3) -- (a3) -- (a2);
\draw (a4) -- (b4);
\draw (b5) -- (b6) -- (a6) -- (a5);
\draw (b8) -- (b5) -- (a5) -- (a8) -- (b8);
\draw (b7) -- (a7);
\draw (b1) -- (x) -- (b5);
\draw [dashed] (a3) -- (a4) -- (a1);
\draw [dashed] (a8) -- (a7) -- (a6);
\draw [dashed] (b3) -- (b4) -- (b1);
\draw [dashed] (b8) -- (b7) -- (b6);

\draw (a1) [fill=white] circle (\vr);
\draw (a2) [fill=white] circle (\vr);
\draw (a3) [fill=white] circle (\vr);
\draw (a4) [fill=white] circle (\vr);
\draw (a5) [fill=white] circle (\vr);
\draw (a6) [fill=white] circle (\vr);
\draw (a7) [fill=white] circle (\vr);
\draw (a8) [fill=white] circle (\vr);
\draw (b1) [fill=white] circle (\vr);
\draw (b2) [fill=white] circle (\vr);
\draw (b3) [fill=white] circle (\vr);
\draw (b4) [fill=white] circle (\vr);
\draw (b5) [fill=white] circle (\vr);
\draw (b6) [fill=white] circle (\vr);
\draw (b7) [fill=white] circle (\vr);
\draw (b8) [fill=white] circle (\vr);
\draw (x) [fill=white] circle (\vr);

\draw[anchor = north] (a1) node {$u'_1$};
\draw[anchor = north] (a2) node {$u'_2$};
\draw[anchor = south] (a3) node {$u'_3$};
\draw[anchor = south] (a4) node {$u'_{2d}$};
\draw[anchor = north] (a5) node {$u'_{2d+1}$};
\draw[anchor = north] (a6) node {$u'_{2d+2}$};
\draw[anchor = south] (a7) node {$u'_{4d-1}$};
\draw[anchor = south] (a8) node {$u'_{4d}$};
\draw[anchor = north] (b1) node {$v'_1$};
\draw[anchor = north] (b2) node {$v'_2$};
\draw[anchor = south] (b3) node {$v'_3$};
\draw[anchor = south] (b4) node {$v'_{2d}$};
\draw[anchor = north] (b5) node {$v'_{2d+1}$};
\draw[anchor = north] (b6) node {$v'_{2d+2}$};
\draw[anchor = south] (b7) node {$v'_{4d-1}$};
\draw[anchor = south] (b8) node {$v'_{4d}$};
\draw[anchor = north] (x) node {$x'$};

\end{tikzpicture}
\end{center}
\caption{Graph $G'_d$, $d\geq 2$.} \label{G2dash}
\end{figure}

Graphs $G_d$ and $G'_d$ are shown in Figures \ref{G2} and \ref{G2dash}, respectively.

\begin{lemma} \label{lemma}
The $W$-structure of $G_d$ is a scant and the $W$-structure of $G'_d$ is not a scant for every $d \geq 2$.
\end{lemma}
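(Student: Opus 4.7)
The plan is to prove the two assertions independently, both via Lemma~\ref{toll1}, which reduces membership $w \in T(y,z)$ to two non-separation conditions on $N[y]$ and $N[z]$. For the scant half I will show that every vertex lies in every non-trivial toll interval of $G_d$; for the non-scant half I will produce one pair $y',z'$ together with a specific witness $w' \notin T(y',z')$ in $G'_d$.

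For $G_d$: first, whenever $y,z$ are adjacent, $T(y,z) = \{y,z\}$ (on any longer toll walk from $y$ to $z$, the endpoint $z$ would itself be a second neighbor of $y$ appearing on the walk, contradicting the toll walk condition at $y$), so scantness holds automatically at such pairs. I then take $y,z$ distinct and non-adjacent and aim at $T(y,z) = V(G_d)$. The two structural facts to establish are: (a) $G_d - N[y]$ is connected for every vertex $y$; (b) every $w \in N(y)$ has a neighbor outside $N[y]$. Both follow from a short case check on the three orbits $y = u_i$, $y = v_i$, $y = x$: removing $N[y]$ cuts the two $4d$-cycles into long paths that are still glued by many rungs $u_j v_j$, and the hypothesis $d \ge 2$ guarantees that at least one of the two anchors $v_1, v_{2d+1}$ survives so that $x$ stays attached. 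Given (a) and (b), Lemma~\ref{toll1} delivers $w \in T(y,z)$ as follows: if $w \notin N[y]$, then $w$ and $z$ share the unique component of $G_d - N[y]$; if $w \in N(y)$, then any neighbor of $w$ lying outside $N[y]$ keeps $w$ connected to that component in $G_d - (N[y] - \{w\})$. The symmetric condition on $N[z]$ is handled identically.

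For $G'_d$: take $y' = u'_1$, $z' = u'_{2d+1}$, and the witness $w' = u'_2$. The walk $u'_1 v'_1 x' v'_{2d+1} u'_{2d+1}$ is a toll walk (each endpoint has exactly one neighbor on the walk), so $v'_1 \in T(u'_1, u'_{2d+1})$ and hence $T(u'_1, u'_{2d+1}) \neq \{u'_1, u'_{2d+1}\}$. On the other hand, after removing $N[u'_1] - \{u'_2\} = \{u'_1, u'_{2d}, v'_1\}$ from $G'_d$, the only link between the two prisms (the path $v'_1 x' v'_{2d+1}$) is cut at $v'_1$, so $u'_2$ is trapped in the truncated first prism while $u'_{2d+1}$ lies, together with $x'$, in the second prism. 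Hence Lemma~\ref{toll1} yields $u'_2 \notin T(u'_1, u'_{2d+1})$, and $T(u'_1, u'_{2d+1})$ is a proper subset of $V(G'_d)$ strictly containing $\{u'_1, u'_{2d+1}\}$, which refutes scantness.

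The main obstacle I expect is the case analysis in (a) for the boundary choices $y \in \{u_1, u_{2d+1}, v_1, v_{2d+1}, x\}$, where $N[y]$ meets one of the anchor edges $v_1 x$ or $v_{2d+1} x$; one has to verify that at least one anchor survives so $x$ stays connected to the cycles in $G_d - N[y]$. This is precisely where the hypothesis $d \ge 2$ enters, preventing both anchors from being swallowed by a single closed neighborhood; once this subtlety is checked, the remaining reasoning is local and routine.
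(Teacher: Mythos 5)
Your proof is correct, but it takes a genuinely different route from the paper's on both halves. For the scant part the paper argues by brute force: for each of five cases on the positions of $z$ and $y$ it explicitly lists toll $z,y$-walks whose union covers $V(G_d)$. You instead reduce everything to Lemma~\ref{toll1} via two clean structural facts --- that $G_d-N[y]$ is connected for every $y$, and that every neighbour of $y$ has a neighbour outside $N[y]$ --- which together kill both non-separation conditions for every candidate vertex $w$ at once. This is shorter, less error-prone, and makes visible exactly where $d\ge 2$ is used (only the vertex $x$ has both $v_1$ and $v_{2d+1}$ in its closed neighbourhood, so whenever $x$ survives the deletion of $N[y]$ at least one anchor survives with it); the paper's version, by contrast, produces the toll walks one needs if one ever wants them explicitly. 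For the non-scant part the paper simply observes $T(v'_2,x')=\{v'_2,v'_1,x'\}$, whereas you use the pair $u'_1,u'_{2d+1}$ with witness $u'_2$ excluded via the separator $\{u'_1,u'_{2d},v'_1\}$; both are valid, the paper's being a one-liner. One small imprecision: your parenthetical claim that a toll walk between adjacent $y,z$ cannot be ``longer'' is not quite right --- walks such as $yzyz$ are toll walks --- but the forced identities $w_2=z$ and $w_{k-1}=y$ propagate to show any such walk visits only $y$ and $z$, so the conclusion $T(y,z)=\{y,z\}$ (which is all you need, since Axiom (SP) is vacuous there) stands.
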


 \begin{proof}
 It is easy to observe that $W$-structure of $G'_d$ is not a scant, since $T(v'_2, x')=\{v'_2, v'_1, x'\}$. For $G_d$ let $z,y \in V(G_d)=U\cup V\cup X$, where $U=\{u_1,u_2,\ldots ,u_{4d}\}$, $V=\{ v_1,v_2,\ldots, v_{4d}\}$, $X=\{x\}$ and $d(z,y)\geq 2$. We have to show that $T(z,y) =V(G_d)$. 
    
\noindent\textbf{Case 1.} $z, y\in U$.
Let $z=u_i$ and $y=u_j$. Both $z,y$-paths on $U$ are toll walks and $U \in T(z,y)$. If we start with edge $u_iv_i$, continue on both $v_i,v_j$-paths on $V$ and end with $u_jv_j$ we get two toll $z,y$-walks that contain $V$. For $x$ notice that at least one of $z,u_1$-path or $z,u_{2d+1}$-path on $U$ contains no neighbor of $y$. We may assume that $z,u_1$-path $P$ in $U$ is such. Denote by $Q$ the $v_{2d+1},v_j$-path on $V$. Now, $zx\xrightarrow{P}u_1v_1xv_{2d+1}\xrightarrow{Q}v_jy$ is a toll walk and $T(z,y) =V(G_d)$.

\noindent\textbf{Case 2.} $z,y\in V$.
Let $z=v_i$ and $y=v_j$. By the symmetric reason as in Case 1 we have $U, V \in T(z,y)$. Again we may assume by symmetry that $z,v_1$-path $P$ on $V$ contains no neighbor of $y$. If $z\notin \{v_{2d}, v_{2d+2}\}$, then there always exists a $v_{2d+1},y$-path $Q$ on $V$ without a neighbor of $z$. Path $z\xrightarrow{P}v_1xv_{2d+1}\xrightarrow{Q}y$ is a toll walk. Otherwise, if $z\in\{v_{2d},v_{2d+2}\}$, say $z=v{2d}$, then $zv_{2d+1}xv_1\xrightarrow{Q}y$ is a toll walk if $y\neq v_{2d+2}$. So, let $z=v{2d}$ and $y=v_{2d+2}$. Now, $z\xrightarrow{P}v_1xv_1\xrightarrow{Q}y$ is a toll $z,y$-walk and we have $T(z,y)=V(G_d)$.

\noindent\textbf{Case 3.} $z=x$ and $y\in V$. Let $y=v_j$ where $j\notin\{1,2d+1\}$. Without loss of generality, let $2\leq j \leq 2d$. Now consider the following $x,v_j$-walks:
\begin{itemize}
    \item $xv_1v_2\cdots v_j$,
    \item $xv_{2d+1}v_{2d}\cdots v_j$,
    \item $xv_1u_1u_2\cdots u_jv_j$ or \\
            $xv_{2d+1}u_{2d+1}u_{2d}\cdots u_jv_j$,
    \item $xv_1u_1u_{4d}u_{4d-1} \cdots u_jv_j$ or \\
           $xv_{2d+1},u_{2d+1},u_{2d+2},\cdots ,u_{4d},u_1, u_2 \cdots u_j,v_j$,
    \item $x_1,v_1,v_{4d},v_{4d-1},\cdots , v_{2d+2}, u_{2d+2}, u_{2d+1}, u_{2d}, \cdots , u_j,v_j$ or \\
           $x_1,v_{2d+1},v_{2d+2},\cdots ,v_{4d},u_{4d},u_1, u_2 \cdots u_j,v_j$
\end{itemize}
Notice, that in the last three items only one of the mentioned walks is a toll walk when $y\in\{v_2,v_{2d}\}$. However, every vertex in $V(G_d)$ belongs to at least one toll $z,y$-walk, and $T(x,y) =V(G_d)$ follows.
      
\noindent\textbf{Case 4.} $z=x$ and $y\in U$. 
Since $u_jv_j$ is an edge, this case can be treated similarly as Case 3.
      
\noindent\textbf{Case 5.} $z\in U$ and $y\in V$. First, let $d(z,y)=2$ and we prove $T(u_1,v_2)=V(G_d)$. The following $u_1v_2$-toll walks contains every vertex of $G_d$ at least once: 
\begin{itemize}
\item $u_1u_2v_2$;
\item $u_1v_1v_2$;
\item $u_1u_{4d}u_{4d-1} \cdots u_3v_3v_2$;
\item $u_1u_{4d}v_{4d}v_{4d-1} \cdots v_{2d+1}xv_{2d+1}v_{2d}v_{2d-1}\cdots v_3v_2$.
\end{itemize}
Similarly, usually even easier, we obtain toll walks from $z$ to $y$, which will cover all vertices of $G_d$ for all the other choices of $z\in U$ and $y\in V$, also when $d(z,y) >2$. \qed
 \end{proof}

\begin{lemma}\label{lemma2}
Let $n\geq 1$ and $d> 2^{n+1}$. If $(X_1,D_1)$ and $(X_2,D_2)$ are scant ternary structures such that the underlying graph of $(X_1,D_1)$ is $G_d$ and the underlying graph of $(X_2,D_2)$ is $G_d^{\prime}$, then $(X_1,D_1)$ and $(X_2,D_2)$ satisfy the same sentence $\psi$ with $qr(\psi)\leq n$. 
\end{lemma}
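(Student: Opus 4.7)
The plan is to invoke Theorem~\ref{ef} by exhibiting a winning strategy for the Duplicator in the $n$-round EF game on $(X_1,D_1)$ and $(X_2,D_2)$. I first observe that in a scant ternary structure with underlying graph $G$, the relation $D$ is determined entirely by $E(G)$: for distinct $z,y$, $D(z,w,y)$ holds iff either $zy\in E(G)$ and $w\in\{z,y\}$, or $zy\notin E(G)$ (in which case it holds for every $w$); and $D(z,w,z)$ iff $w=z$. Consequently any partial graph isomorphism between $G_d$ and $G'_d$ lifts automatically to a partial isomorphism of the ternary structures, so it suffices to produce a Duplicator winning strategy in the $n$-round graph EF game on $G_d$ and $G'_d$.

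The key geometric observation is that $G_d$ and $G'_d$ are locally isomorphic at every radius smaller than $d$. Topologically, $G_d$ is a prism of circumference $4d$ with $x$ joined to two antipodal vertices of one rail, so the shortest cycle through $x$ has length $2d+2$; whereas $G'_d$ consists of two disjoint $2d$-prisms joined only through the cut vertex $x'$, so no cycle passes through $x'$. However, for every $r\le 2^n$ the hypothesis $d>2^{n+1}$ yields $2r<d$, and hence no $r$-ball in either graph wraps around one of the cycles or simultaneously touches both sides of the rewiring region. As a consequence, for every vertex $a\in V(G_d)$ there is a vertex $b\in V(G'_d)$ with $B_{G_d}(a,r)\cong B_{G'_d}(b,r)$ as rooted graphs, and symmetrically for $G'_d$; the $r$-balls around $x$ and $x'$ are also isomorphic, both being two disjoint prism-arms attached to a degree-$2$ root.

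Duplicator's strategy is the standard back-and-forth with shrinking radii $r_i=2^{n-i}$. The invariant maintained after round $i$ is that the map $a_j\mapsto b_j$ (for $j\le i$) extends to a graph isomorphism between $\bigcup_{j\le i}B_{G_d}(a_j,r_i)$ and $\bigcup_{j\le i}B_{G'_d}(b_j,r_i)$. When Spoiler picks a vertex in round $i$ that lies in some previously established $r_{i-1}$-ball, the response is forced by the current isomorphism; otherwise Duplicator picks a vertex in the other graph whose rooted $r_i$-ball is isomorphic to Spoiler's and which is at distance exceeding $2r_i$ from every previously chosen vertex there. The hypothesis $d>2^{n+1}$ guarantees both the existence of such a matching rooted $r_i$-ball (by the local isomorphism) and enough room to place it, since the previously-chosen vertices and their $2r_i$-neighborhoods together occupy only $O(n\cdot 2^{n+1})$ positions, much fewer than $|V(G_d)|=|V(G'_d)|=8d+1$. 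By Theorem~\ref{ef} the conclusion follows.

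The main obstacle is the ``fresh'' step of the back-and-forth when Spoiler plays near the rewiring zone (for instance at $v_{2d}$, whose naive $G'_d$-counterpart $v'_{2d}$ sees the edge $v'_1v'_{2d}$ in its $1$-ball while $v_{2d}$'s $1$-ball in $G_d$ contains no analogous edge). The resolution is that Duplicator is free to apply a ``shift and fold'' reparametrisation, responding with a vertex such as $v'_2$ whose rooted $r_i$-ball faithfully matches that of $v_{2d}$; crucially this works because the difference between $G_d$ and $G'_d$ can only be witnessed by a cycle of length $2d+2$ through $x$, whose detection requires quantifier rank exceeding $n$ whenever $d>2^{n+1}$.
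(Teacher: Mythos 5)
Your proposal follows essentially the same route as the paper: both reduce the game on the scant ternary structures to the graph EF game on $G_d$ and $G'_d$ (scantness makes $D$ determined by the edge relation, so a partial graph isomorphism lifts), and both then run the standard $n$-round Duplicator strategy with distance scales halving from $2^{n-i}$ to $2^{n-i-1}$ each round, using $d>2^{n+1}$ to guarantee the two graphs are indistinguishable at these scales. The paper states the invariant as preservation of pairwise distances up to the threshold $2^{n-i}$ rather than as isomorphism of unions of $r_i$-balls, but this is the same strategy in different clothing.
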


\begin{proof}	
Let $X_1=\{u_1,u_2,\dots,u_{4d},v_1,v_2,\dots,v_{4d},x\}$ and let $X_2=\{u'_1,u'_2,\dots $, $u'_{4d},v'_1,v'_2,\dots,v'_{4d},x'\}$. Let $U=\{u_1, u_2,\dots,u_{4d}\}$, $V=\{ v_1,v_2,\dots,v_{4d}\}$ and $X=\{x\}$.
Also, let $U'=\{u'_1,u'_2,\dots,u'_{4d}\}$, $V'=\{v'_1,v'_2,\ldots,v'_{4d}\}$ and $X'=\{x'\}$. Clearly, $X_1=U\cup V\cup X$ and $X_2=U' \cup V'\cup X'$. Let $d^*$ and $d'$ denote the distance function of $G_d$ and $G'_d$ respectively. 
 
 We will show that the Duplicator wins the $n$-move EF-game on $G_{d}$ and $G'_{d}$ using induction on $n$. In the $i^{th}$ move of the $n$-move game on $G_{d}$ and $G'_{d}$, we use $a_i$  and $b_i$, respectively, to denote points chosen from $G_{d}$ and $G'_{d}$. Clearly, $a_i$ will be an element in $X_1$ and $b_i$ an element in $X_2$. Note that, during the game, the elements of $U$ (respectively, $V$ and $X$) will be mapped to element of $U'$ (respectively, $V'$ and $X'$).
 
 Let $H_{1}$ be the subgraph of $G_d$ induced by $U$ and $H'_{1}$ the subgraph of $G'_d$ induced by $U'$. Since $(X_1,D_1)$ and $(X_2,D_2)$ are scant ternary structures, Duplicator must preserve the edges in $G_d$ and $G'_d$ to win the game.
	
We claim that for $1\leq j,l \leq i\leq n$, Duplicator can play in $G_d$ and $G'_d$, in a way that ensures the following conditions after each round.
\begin{align*}
(1) & \text{ If } d^*(a_j,a_l) \leq 2^{n-i}, \text{ then } d'(b_j,b_l)=d^*(a_j,a_l). \\
(2) & \text{ If } d^*(a_j,a_l) > 2^{n-i}, \text{ then } d'(b_j,b_l)> 2^{n-i}.
\end{align*}	
	
Obviously, to win the game, the following correspondence must be preserved by Duplicator:
	
$$u_1 \mapsto u'_1, v_1 \mapsto v'_1, x\mapsto x', u_{2d+1}\mapsto u'_{2d+1},v_{2d+1} \mapsto v'_{2d+1}.$$ 
	
For $i=1$, (1) and (2) hold trivially. Suppose that they hold after $i$ moves and that the Spoiler makes his $(i+1)^{th}$ move. Let the Spoiler pick $a_{i+1} \in X_1$ (the case of $b_{i+1}\in X_2$ is symmetric). If $a_{i+1}=a_j$ for some $j \leq i$, then $b_{i+1}=b_j$ and conditions (1) and (2) are preserved. Otherwise, find two previously chosen vertices $a_j$ and $a_{\ell}$ closest to $a_{i+1}$ so that there are no other previously chosen vertices on the $a_j,a_{\ell}$-path of $G_d$ that passes through $a_{i+1}$. 

\noindent \textbf{Case 1.} $a_j,a_{\ell},a_{i+1}\in U$. \\
First, we consider the case where $d^*(a_j,a_{\ell}) = d_{H_1}(a_j,a_{\ell})$. (This was proved in Case 1 considered in Lemma 2 in \cite{jcap}, so we revisit the proof here.) 
If $d^*(a_j,a_{\ell})$  $ \leq 2^{n-i}$, then by the induction assumption there will be vertices $b_j$ and $b_{\ell}$ in $G'_{d}$ with $ d'(b_j,b_{\ell})\leq 2^{n-i}$. The Duplicator can select $b_{i+1}$ so that $d^*(a_j,a_{i+1})= d'(b_j,b_{i+1})$ and $d^*(a_{i+1},a_{\ell}) =d'(b_{i+1},b_{\ell})$. Clearly, the conditions $(1)$ and $(2)$ will be satisfied. On the other hand, if $d^*(a_j,a_{\ell}) > 2^{n-i},$ then by the induction assumption $d'(b_j,b_{\ell})> 2^{n-i}$. There are two cases. (i) If $d^*(a_j,a_{i+1}) > 2^{n-(i+1)} $ and $d^*(a_{i+1},a_{\ell}) > 2^{n-(i+1)}$ and fewer than $n$-rounds of the game have been played, then there exists a vertex in $G'_{d}$ at a distance larger than $2^{n-(i+1)}$ from all previously played vertices.
(ii) If $d^*(a_j,a_{i+1})\leq 2^{n-(i+1)}$ or $d^*(a_{i+1},a_{\ell})\leq 2^{n-(i+1)}$ and suppose that $d^*(a_j,a_{i+1})\leq 2^{n-(i+1)}$, then $d^*(a_{i+1},a_{\ell}) > 2^{n-(i+1)}$. So, the Duplicator can select $b_{i+1}$ with $d'(b_j,b_{i+1})= d^*(a_j,a_{i+1})$ and $d'(b_{i+1},b_{\ell})>2^{n-(i+1)}$.

Now, suppose that $d^*(a_j,a_{\ell}) \neq d_{H_1}(a_j,a_{\ell})$. This case occurs when $a_j,a_{\ell}$-shortest path contains $u_1,u_{2d+1},v_1, v_{2d+1}$ and $x$. We may assume that 
$$min \{d^*(a_j,a_{i+1}),d^*(a_{\ell}, a_{i+1})\}=d^*(a_j,a_{i+1}).$$ 
Now, choose $b_{i+1}$ so that $d^*(a_j,a_{i+1})= d'(b_j,b_{i+1})$. 

\noindent \textbf{Case 2.} $a_j,a_{\ell},a_{i+1} \in V$. \\
Let $a_j= v_r$, $a_{\ell} = v_s$, $a_{i+1} = v_t$ and find the elements $u_r$, $u_s$ and $u_t$ in $U$ and use case 1 to find the response of Duplicator when Spoiler chooses $u_t$. If $u_t \mapsto u'_z$, then choose $b_{i+1} = v'_z$. 

Similarly, for the other cases (when $a_j$ belongs to $U$ or $V$, $a_{\ell}$ belongs to $V$ or $U$ and $a_{i+1}$ belongs to $ V$ or $U$) we can make all the vertices lying in $U$ as in case 2 and it is possible to find a response from the Duplicator. Evidently, in all the cases, the conditions (1) and (2) hold. Therefore, after $n$ rounds of the game, the Duplicator can preserve the partial isomorphism.
Thus, Duplicator wins the $n$-move EF-game on $G_{d}$ and $G'_{d}$. Hence, by Theorem \ref{ef}, we obtain the result. \qed
\end{proof}

From Lemma \ref{lemma} and Lemma \ref{lemma2}, we can conclude the following result.

\begin{theorem} \label{W}
There exists no sentence $\sigma$ of the first-order logic of vocabulary $\{D\}$ such that a connected ternary structure is a $W$-structure if and only if it satisfies $\sigma$. 
\end{theorem}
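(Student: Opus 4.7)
The plan is to argue by contradiction, following the Nebeský-style template that the authors have already laid out with Lemmas \ref{lemma} and \ref{lemma2}. Assume there is a first-order sentence $\sigma$ in the vocabulary $\{D\}$ that defines $W$-structures among connected ternary structures. First I would observe that the scant property (Axiom (SP)) is expressible in first order by
\[
\psi_{\mathrm{SP}} \;\equiv\; \forall x\, \forall y\, \Bigl(\bigl(\exists z\, (D(x,z,y) \wedge z\neq x \wedge z\neq y)\bigr) \rightarrow \forall w\, D(x,w,y)\Bigr),
\]
which has fixed, finite quantifier rank. Set $n = qr(\sigma \wedge \psi_{\mathrm{SP}})$ and fix $d > 2^{n+1}$, so that Lemma~\ref{lemma2} is available for this value of $d$.

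Next I would exhibit two scant ternary structures to which Lemma~\ref{lemma2} applies. For $(X_1,D_1)$ I take the $W$-structure of $G_d$; by Lemma~\ref{lemma} it is scant, by assumption it satisfies $\sigma$, and by construction it satisfies $\psi_{\mathrm{SP}}$. For $(X_2,D_2)$ I construct an \emph{artificial} scant transit function on $V(G'_d)$: declare $D_2(u,w,v)$ to hold exactly when $w \in \{u,v\}$, or $uv \notin E(G'_d)$ and $w$ is arbitrary. A direct check shows that the associated $F$ is a transit function, that it is scant, and that its underlying graph is exactly $G'_d$, so $(X_2,D_2) \models \psi_{\mathrm{SP}}$. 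However, by Lemma~\ref{lemma} the genuine $W$-structure of $G'_d$ is not scant; since a graph determines its $W$-structure uniquely, $(X_2,D_2)$ is not the $W$-structure of any graph, and therefore $(X_2,D_2) \not\models \sigma$.

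Now I would invoke Lemma~\ref{lemma2}: since $(X_1,D_1)$ and $(X_2,D_2)$ are scant ternary structures whose underlying graphs are $G_d$ and $G'_d$ respectively, and $d > 2^{n+1}$, they satisfy the same first-order sentences of quantifier rank at most $n$. In particular they must agree on $\sigma \wedge \psi_{\mathrm{SP}}$. But $(X_1,D_1) \models \sigma \wedge \psi_{\mathrm{SP}}$ while $(X_2,D_2) \not\models \sigma$, a contradiction, and the theorem follows via Theorem~\ref{ef2}.

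The only point that requires care, and is the place where one could easily slip, is the explicit verification that the artificial $D_2$ really defines a scant ternary structure whose underlying graph is $G'_d$ in the exact sense demanded in the preliminaries: properties (i)--(iii) of $D_2$, the scant condition on the induced $F$, and the adjacency condition $\{w:D_2(u,w,v)\}\cup\{w:D_2(v,w,u)\}=\{u,v\}$ precisely when $uv\in E(G'_d)$. All of these follow directly from the definition, but they must be stated explicitly so that the hypothesis of Lemma~\ref{lemma2} is cleanly satisfied; the rest of the argument is then bookkeeping.
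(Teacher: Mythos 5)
Your proposal follows essentially the same route as the paper: the paper's proof of Theorem~\ref{W} is precisely the one-line combination of Lemmas~\ref{lemma} and~\ref{lemma2}, and you have correctly supplied the glue the paper leaves implicit, namely that a scant transit structure with underlying graph $G'_d$ (which is in fact uniquely determined by $G'_d$) cannot satisfy $\sigma$, because a $W$-structure is the $W$-structure of its underlying graph and the $W$-structure of $G'_d$ is not scant. The only slip is in your explicit definition of $D_2$: since $uu\notin E(G'_d)$, your second disjunct makes $D_2(u,w,u)$ hold for every $w$, so $F(u,u)=V$ and condition~(iii) of a transit function fails; the disjunct must be restricted to $u\neq v$. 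Conjoining $\psi_{\mathrm{SP}}$ to $\sigma$ is harmless but unnecessary, since the contradiction only requires the two structures to agree on $\sigma$ itself.
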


For $n\geq 1$, $d\geq 2^{n+1}$, let us consider the cycles $C_{2d}$ and $C_{2d+1}$. It is evident that the $W$-structure of both $C_{2d}$ and $C_{2d+1}$ is scant. Furthermore, Duplicator can maintain the conditions $(1)$ and $(2)$  in $C_{2d}$ and $C_{2d+1}$ and this will ensure Duplicator winning an $n$ move $EF$ game in $C_{2d}$ and $C_{2d+1}$. Since $C_{2d}$ is bipartite and $C_{2d+1}$ is not, by Theorem \ref{ef2} we arrive at the following theorem. 

\begin{theorem}
Let $(X,D)$ be a W-structure. Then the bipartite graphs cannot be defined by a first-order formula $\phi$ over $(X,D)$.   
\end{theorem}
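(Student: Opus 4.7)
The plan is to argue by contradiction using Theorem~\ref{ef2}: suppose that bipartiteness is expressible in the first-order logic over the signature $\{D\}$ restricted to $W$-structures. Let $k$ be the bound guaranteed by Theorem~\ref{ef2}. Fix $d\geq 2^{k+1}$ and let $\mathbf{C}_{2d}$ and $\mathbf{C}_{2d+1}$ denote the $W$-structures of the cycles $C_{2d}$ and $C_{2d+1}$ respectively. The underlying graph of $\mathbf{C}_{2d}$ is bipartite, whereas the underlying graph of $\mathbf{C}_{2d+1}$ is not. I will show that the Duplicator has a winning strategy in the $k$-move EF-game on $\mathbf{C}_{2d}$ and $\mathbf{C}_{2d+1}$, which will contradict the choice of $k$ and complete the proof.

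The first step is to verify that both $W$-structures are scant. In a cycle $C_m$ with $m\geq 4$, any two non-adjacent vertices $u,v$ are joined by exactly two internally disjoint paths $P_1$ and $P_2$; each is by itself a toll $u,v$-walk, so their union $V(P_1)\cup V(P_2)=V(C_m)$ is contained in $T(u,v)$. Hence $T(u,v)=V(C_m)$ whenever $uv\notin E(C_m)$ and $u\neq v$, so the $W$-structure is scant. A useful consequence of scantness is that the ternary relation $D$ is completely determined by the edge relation of the cycle: $D(u,x,v)$ holds exactly when $u=v=x$, or $uv\in E$ and $x\in\{u,v\}$, or $uv\notin E$ and $u\neq v$ (any $x$). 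Therefore every partial isomorphism between the \emph{graphs} $C_{2d}$ and $C_{2d+1}$ that preserves adjacency and non-adjacency is automatically a partial isomorphism between the ternary structures $\mathbf{C}_{2d}$ and $\mathbf{C}_{2d+1}$.

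The second step is the EF-game argument on the graphs $C_{2d}$ and $C_{2d+1}$. I will have the Duplicator maintain, after round $i$, the following invariant for all played pairs $1\leq j,\ell\leq i$, with $d^*$ the distance in $C_{2d}$ and $d'$ the distance in $C_{2d+1}$:
\begin{align*}
\text{if } d^*(a_j,a_\ell)\leq 2^{k-i}, \text{ then } d'(b_j,b_\ell)&=d^*(a_j,a_\ell);\\
\text{if } d^*(a_j,a_\ell)>2^{k-i}, \text{ then } d'(b_j,b_\ell)&>2^{k-i},
\end{align*}
and analogously with the roles of the two cycles swapped. This is exactly the standard Hanf-type distance-halving invariant used for EF-games on long paths/cycles (see Lemma~\ref{lemma2} for the same pattern in the graphs $G_d,G_d'$). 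The initial round is vacuous. For the induction step, when the Spoiler picks $a_{i+1}$, the Duplicator locates the two previously chosen points $a_j,a_\ell$ that are nearest to $a_{i+1}$ on the unique shortest arc through $a_{i+1}$ and responds in the corresponding arc of the other cycle, copying the distance $d^*(a_j,a_{i+1})$ exactly if it is at most $2^{k-(i+1)}$, and otherwise placing $b_{i+1}$ at distance greater than $2^{k-(i+1)}$ from every previously played point; the choice $d\geq 2^{k+1}$ guarantees that enough room exists in both cycles for such a placement at every round. Since the invariant implies adjacency is preserved ($d^*=1$ case with $i<k$), and in a scant $W$-structure adjacency determines $D$, the resulting map is a partial isomorphism of the ternary structures.

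The main obstacle is purely bookkeeping: one must check that the arc structure of the shorter and longer odd/even cycles does not interfere with the distance-halving argument, i.e., that the parity difference $2d$ versus $2d+1$ is not detected by the Duplicator's local moves. This works because the invariant only tracks distances up to $2^{k-i}\leq 2^{k-1}$, which is strictly smaller than the radius $d\geq 2^{k+1}$ of either cycle, so the Duplicator never needs to traverse more than half of a cycle and the parity of the total length is invisible. Thus the Duplicator wins the $k$-move EF-game on $\mathbf{C}_{2d}$ and $\mathbf{C}_{2d+1}$. Since $\mathbf{C}_{2d}$ has a bipartite underlying graph and $\mathbf{C}_{2d+1}$ does not, Theorem~\ref{ef2} forces the conclusion that bipartiteness is not first-order definable over $W$-structures. \qed
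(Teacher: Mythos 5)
Your proposal is correct and follows essentially the same route as the paper: both take the cycles $C_{2d}$ and $C_{2d+1}$ for $d$ large relative to the quantifier rank, observe that both $W$-structures are scant, run the distance-halving Duplicator strategy of Lemma~\ref{lemma2} to win the EF-game, and conclude via Theorem~\ref{ef2}. In fact you supply two details the paper leaves as ``evident''---the verification that $T(u,v)=V(C_m)$ for non-adjacent $u,v$ via the two arcs, and the observation that scantness reduces partial isomorphism of the ternary structures to partial isomorphism of the underlying graphs---which strengthens rather than alters the argument.
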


\section{Concluding Remarks}

First, we present several examples that show the independence of the axioms used in this contribution. In all the examples we have $R(a,a)= \{a\}$ for every $a \in V$.

\begin{example} There exists a transit function that satisfies Axioms (b2'), (J2), (J4), (J4'), (TW1') and (TWA),  but not Axioms (b1') and (b1).$~$\label{ex1}\\
 Let $V=\{u,v,z,x,y\}$ and define a transit function $R$ on $V$ as follows: $R(u,v) = R(z,v)= V$,  $R(x,v)=\{x,y,v\}$, $R(u,z)=\{u,x,z\}$,   $R(u,y)=\{u,x,y\}$,  $R(z,y)=\{z,x,y\}$ and  $R(a,b)=\{a,b\}$ for all the other pairs of different elements $a,b\in V$.  It is straightforward but tedious to see that $R$ satisfies Axioms (b2'), (J2), (J4), (J4'), (TW1') and (TWA). In addition $z\in R(u,v) $, $R(u,z)\neq \{u,z\}$,  and $u\in R(z,v)$ and $R$ do not satisfy Axiom (b1'). Therefore, $R$ does not also satisfy Axiom (b1).
 \end{example}

\begin{example} There exists a transit function that satisfies Axioms (b1'), (J2), (J4), (J4'), (TW1') and (TWA),  but not Axioms (b2') and (b2).$~$\label{ex2}\\
 Let $V=\{u,v,w,x,y,z\}$ and define a transit function $R$ on $V$ as follows: $R(u,v) =\{u,y,x,v\} $, $R(u,y)=\{u,x,y\}$, $R(u,w)=\{u,y,w\}$, $R(y,v)=\{y,z,x,v\}$, $ R(u,z)= \{u,x,z\}$, $R(w,v)=\{w,z,v\}$, and $R(a,b)=\{a,b\}$ for all the other pairs of different elements $a,b\in V$. It is straightforward but tedious to see that $R$ satisfies Axioms (b1'), (J2), (J4), (J4'), (TW1') and (TWA). On the other hand $y\in R(u,v) $, $R(u,y)\neq \{u,y\}$,   $z\in R(y,v)$ and $z\notin R(u,v)$, so $R$ does not satisfy Axiom (b2') hence $R$ does not satisfy Axiom (b2).
 \end{example}

\begin{example} There exists a transit function that satisfies Axioms (b1'), (b2'), (J2),  (J4'), (TW1') and (TWA),  but not Axioms (J4) and (JC).$~$\label{ex3}\\
 Let $V=\{u,v,x,y,z\}$ and define a transit function $R$ on $V$ as follows: $R(u,v) =\{u,z,v\} $, $R(u,y)=\{u,x,y\}$,  $R(x,v)=\{x,y,v\}$ and  $R(a,b)=\{a,b\}$ for all the other pairs of different elements $a,b\in V$. It is straightforward but tedious to see that $R$ satisfies Axioms (b1'), (b2'), (J2),  (J4'), (TW1') and (TWA). In addition $x\in R(u,y) $,   $y\in R(x,v)$, $R(u,v)\neq \{u,v\}$ and $x\notin R(u,v)$, so $R$ does not satisfy Axioms (J4) and (JC).
 \end{example}

 \begin{example} There exists a transit function that satisfies Axioms (b1'), (b2'), (J2), (J4), (TW1') and (TWA),  but not Axiom (J4').$~$\label{ex4}\\
 Let $V=\{u,v,x,y,z_1,z_2,z_3\}$ and define a transit function $R$ on $V$ as follows: $R(u,v) =\{u,z_1,z_2,z_3,v\} $, $R(u,y)=\{u,x,z_1,z_2,y\}$, $R(x,v)=\{x,z_2,z_3,y,v\}$, $R(u,x)=\{u,z_1,x\}$, $R(x,y)=\{x,z_2,y\}$, $R(y,v)=\{y,z_3,v\}$, $ R(z_1,y)= \{z_1,z_2,y\}$, $R(z_3,x)= \{z_3,z_2,x\}$ and $R(a,b)=\{a,b\}$ for all the other pairs of different elements $a,b\in V$. It is straightforward but tedious to see that $R$ satisfies Axioms (b1'), (b2'), (J2), (J4), (TW1'), and (TWA). But $x\in R(u,y)$, $y\in R(x,v)$, $R(u,v)\neq \{u,v\}$, $R(u,x)\neq \{u,x\}$, $R(x,y)\neq \{x,y\}$, $R(y,v)\neq \{y,v\}$, and $x\notin R(u,v)$, so $R$ does not satisfy Axiom (J4').
 \end{example}

 \begin{example} There exists a transit function that satisfies Axioms (b1'), (b2'), (J2), (J4), (J4'),  and (TWA),  but not Axiom (TW1').$~$\label{ex5}\\
 Let $V=\{u,v,w,x,y,z\}$ and define a transit function $R$ on $V$ as follows: $R(u,v) =\{u,y,x,v\} $, $R(u,y)=\{u,x,y\}$, $R(u,w)=\{u,x,w\}$, $R(x,v)=\{x,y,v\}$, $ R(u,z)= \{u,x,z\}$,   $R(z,v)=\{z,y,v\}$,  $R(w,v)=\{w,y,v\}$ and  $R(a,b)=\{a,b\}$ for all the other pairs of different elements $a,b\in V$. It is straightforward but tedious to see that $R$ satisfies Axioms (b1'), (b2'), (J2), (J4), (J4') and (TWA). In addition,  $x,y\in R(u,v)$, $x\neq u$, $y\neq v$, $R(x,v)\neq \{x,v\}$, $R(u,y)\neq \{u,y\}$, $R(x,z)=\{x,z\}$, $R(z,w)=\{z,w\}$, $R(w,y)=\{w,y\}$ and $R(u,w)\neq \{u,w\}$, but $z\notin R(u,v)$, so $R$ does not satisfy Axiom (TW1').
 \end{example}

 \begin{example} There exists a transit function that satisfies Axioms (b1'), (b2'), (J2), (J4), (J4'), and (TW1'),  but not Axioms (TWA) and (TWC).$~$\label{ex6}\\
 Let $V=\{u,v,x,y\}$ and define a transit function $R$ on $V$ as follows: $R(u,v) =V$,  $R(x,v)=\{x,y,v\}$ and  $R(a,b)=\{a,b\}$ for all the other pairs of different elements $a,b\in V$. It is straightforward but tedious to see that $R$ satisfies Axioms (b1'), (b2'), (J2), (J4), (J4') and (TW1'). In additions $x\in R(u,v) $,  but there does not exist $x_1 \in R(x,v) \cap R(u,v)$ where $x_1 \neq x$, $R(x,x_1) = \{x,x_1\}$, $R(u,x_1) \neq \{u,x_1\}$ and $R(x_1,v) \subset R(x,v)$. Therefore, $R$ does not satisfy Axiom (TWA) and also (TWC).
 \end{example}

 \begin{example} There exists a transit function that satisfies Axioms (b1'), (b2'), (J4), (J4'), (TWA) and (TW1'),  but not Axioms (J2) and (tr).$~$\label{ex7}\\
 Let $V=\{u,v,x,y\}$ and define a transit function $R$ in $V$ as follows: $R(u,v) =\{u,x,v\} $ and $R(a,b)=\{a,b\}$ for all the other pairs of different elements $a,b\in V$. It is straightforward but tedious to see that $R$ satisfies Axioms (b1'), (b2'), (J4), (J4'), (TWA) and (TW1'). In addition $ R(u,y)=\{u,y\} $, $ R(y,v)=\{y,v\} $, $ R(u,v)\neq\{u,v\} $ but $y\notin R(u,v)$ Therefore $R$ does not satisfy Axioms (J2) and (tr).
 \end{example}

 \begin{example} There exists a transit function that satisfies Axioms (b2), (J2), (J4), (dh), (TW1), (TW2) and (TWC), but not Axioms (dh1) and (JC).$~$\label{ex8}\\
 Let $V=\{u,v,w,x,y,z\}$ and define a transit function $R$ in $V$ as follows: $R(u,v) =\{u,v\} $, $R(u,y)=\{u,z,x,y\}$, $R(u,x)=\{u,z,x\}$, $R(u,w)=\{u,z,x,y,w\}$, $R(z,y)$ $=\{z,x,y\}$, $R(z,w)=\{z,x,y,w\}$, $R(z,v)=\{z,x,y,w,v\}$, $R(x,w)=\{x,y,w\}$, $R(x,v)=\{x,y,w,v\}$, $R(y,v)=\{y,w,v\}$, and $R(a,b)=\{a,b\}$ for all other pairs of different elements $a,b\in V$. It is straightforward but tedious to see that $R$ satisfies Axioms (b2), (J2), (J4), (dh), (TW1), (TW2), and (TWC). In addition $x\in R(u,y)$, $y\in R(x,v)$, $R(u,x)\neq \{u,x\}$, $R(y,v)\neq \{y,v\}$, $R(x,y)=\{x,y\}$, and $x\notin R(u,v)$, so $R$ does not satisfy Axioma (dh1) and (JC).
 \end{example}

 \begin{example} There exists a transit function that satisfies Axioms (b2), (J2), (J4), (JC), (dh1)  (TW1), (TW2) and (TWC),  but not Axioms (dh)  and (pt).$~$\label{ex9}\\
 Let $G$ be a $3$-fan, $V=V(G)$ and define a transit function $R=T$ on $V$. It is straightforward but tedious to see that $R$ satisfies Axioms (b2), (J2), (J4), (JC), (dh1), (TW1), (TW2), and (TWC). In addition, $T$ does not satisfy the Axioms (dh) and (pt) on a $3$-fan.
 \end{example}

We conclude by observing some interesting facts about the well-known transit functions in a connected graph $G$, namely, the interval function $I$ and the induced path function $J$, and the toll walk function $T$, the topic of this paper. It easily follows that $I(u,v) \subseteq J(u,v)\subseteq T(u,v)$, for every pair of vertices $u,v$ in $G$. It is proved by Mulder and Nebesky in \cite{mune-09} that the interval function of a connected graph $G$ possesses an axiomatic characterization in terms of a set of first-order axioms framed on an arbitrary transit function. From \cite{bipartite}, it follows that an arbitrary bipartite graph also has this characterization.  Further in \cite{ch2022}, Chalopine et al. provided a first-order axiomatic characterization of $I$ of almost all central graph families in metric graph theory, such as the median graphs, Helly graphs, partial cubes, ${\ell}_1$--graphs, bridged graphs, graphs with convex balls, Gromov hyperbolic graphs, modular and weakly modular graphs, and classes of graphs that arise from combinatorics and geometry, namely basis graphs
of matroids, even $\Delta$-matroids, tope graphs of
oriented matroids, dual polar spaces. Also in \cite{ch2022}, it is proved that the family of chordal graphs, dismantlable graphs, Eulerian graphs, planar graphs, and partial Johnson graphs do not possess a first-order axiomatic characterization using the interval function $I$. The list of non-definable graph families is extended in \cite{non-definable} by including the following graphs, namely perfect, probe-chordal, wheels, odd-hole free, even-hole free, regular, $n$-colorable and $n$-connected ($n\geq 3$). It may be noted that the all-paths function $A$ also possesses an axiomatic characterization similar to that of the interval function $I$ \cite{msh}.\\
In \cite{ne-j}, Nebesky proved that the induced path function of an arbitrary connected graph does not possess such a characterization, whereas in \cite{Changat-22}, it is proved that the family of chordal graphs, Ptolemaic graphs, $(HholeP)$-free graphs, $(HholeD)$-free graphs, distance-hereditary graphs, etc. possess first-order axiomatic characterization. \\
In this paper, we have shown that the toll function $T$ does not have a first-order axiomatic characterization for an arbitrary connected graph and a bipartite graph, whereas chordal graphs, trees, $AT$-free graphs, distance hereditary graphs, and Ptolemaic graphs possess such a characterization. Graphs that possess first-order characterization also include the family of interval graphs and $(HC_5PAT)$-free graphs ~\cite{lcp}.  \\
Therefore, the behavior of these graph transit functions is strange and may not be comparable as far as axiomatic characterization is concerned. In this sense, we observe that the behavior of the induced path function may be comparable to the toll function to some extent.  Since most of the classes of graphs that we have provided axiomatic characterizations in terms of the toll function are related to $AT$-free graphs, we believe that the following problem will be relevant.  \\\\
\textbf{Problem.}
It would be interesting to check whether some of the maximal subclasses of $AT$-free graphs like $AT$-free $\cap$ claw-free, strong asteroid-free graphs and the minimal superclasses of $AT$-free graphs like the dominating pair graphs and the probe $AT$-free graphs possess a first-order axiomatic characterization in terms of the toll function $T$? \\

\noindent\textbf{Acknowledgments}:  L.K.K.S acknowledges the financial support of CSIR, Government of India, for providing CSIR Senior Research Fellowship (CSIR-SRF) ({No 09/102(0260)/2019-EMR-I} ). J.J acknowledges the financial support of the University of Kerala, India, for providing University JRF (No: 445/2020/UOK, 391/2021/UOK, 3093/2022/ UOK, 4202/2023/UOK). I.P. was partially supported by Slovenian Research and Inovation Agency by research program number P1-0297.

\end{document}